\def\beq{\begin{equation}}
\def\eeq{\end{equation}}
\def\ben{\begin{eqnarray}}
\def\een{\end{eqnarray}}
\def\bec{\begin{cases}}
\def\eec{\end{cases}}
\def\ss{\sum\limits}
\newcommand{\no}{\nonumber}
\newcommand{\R}{\mathbb R}
\newcommand{\p}{\partial}
\newcommand{\ve}{\varepsilon}
\newcommand{\f}{\frac}
\newcommand{\na}{\nabla}
\newcommand{\la}{\lambda}
\newcommand{\al}{\alpha}
\renewcommand{\t}{\tilde}
\newcommand{\vp}{\varphi}
\renewcommand{\O}{\Omega}
\renewcommand{\th}{\theta}
\newcommand{\g}{\gamma}
\newcommand{\dl}{\delta}
\newcommand{\ds}{\displaystyle}
\newcommand{\opdiv}{\operatorname{div}}
\newcommand{\RN}[1]{\textup{\uppercase\expandafter{\romannumeral#1}}}
\newcommand{\be}{\begin{equation}}
\newcommand{\ee}{\end{equation}}
\newcommand{\bea}{\begin{eqnarray}}
\newcommand{\eea}{\end{eqnarray}}
\newcommand{\md}{\mathrm{d}}
\newcommand{\ra}{[1,\infty)\times\mathbb{R}^2}
\newcommand{\crit}{\textup{crit}}
\newcommand{\conf}{\textup{conf}}
\theoremstyle{plain}
\newtheorem{theorem}{Theorem}[section]
\newtheorem{lemma}[theorem]{Lemma}
\theoremstyle{definition}
\theoremstyle{remark}
\newtheorem{remark}{Remark}[section]
\numberwithin{equation}{section}
\title{Global small data weak solutions of 2-D semilinear wave equations
 with  scale-invariant  damping, I}
\author{Li Qianqian$^1$\quad  Wang Dinghuai$^2$\quad Yin Huicheng$^{1,}$\footnote{Li Qianqian (\texttt{214597007@qq.com}) and Yin Huicheng
    (\texttt{huicheng@} \texttt{nju.edu.cn}, \texttt{05407@njnu.edu.cn}) are supported by the NSFC
    (No.~12331007) and by the National key research and development program of China (No. 2020YFA0713803).
    Wang Dinghuai (\texttt{Wangdh1990@126.com}) is supported by NSFC (No. 12101010), Natural Science Foundation of Anhui Province (No. 2108085QA19) and Key Scientific Project of Higher Education Institutions in Anhui Province
    (No.2023AH050145).}\vspace{0.5cm}\\
\small
1.  School of Mathematical Sciences, Nanjing Normal University, Nanjing 210023, China.\\
\small
2. School of Mathematics and Statistics, Anhui Normal University, Wuhu 241002, China.\\
}
\begin{document}

\date{}

\maketitle
\thispagestyle{empty}

\begin{abstract}
There is an interesting open question:  for the $n$-D ($n\ge 1$) semilinear wave equation with  scale-invariant  damping
$\p_t^2u-\Delta u+\f{\mu}{t}\p_tu=|u|^p$, where $t\ge 1$,  $p>1$ and $\mu>0$, the global small data weak solution $u$ will exist
when $p>p_{crit}(n,\mu)=\max\{p_s(n+\mu), p_f(n)\}$ with $p_{s}(n+\mu)=\f{n+\mu+1+\sqrt{(n+\mu)^2+10(n+\mu)-7}}{2(n+\mu-1)}$
and $p_f(n)=1+\f{2}{n}$. It is noticed that the weak solution $u$ can blow up in finite time when $1<p\le p_{crit}(n,\mu)$.
In addition, for $n=1$, this open question has been solved
recently. We now systematically solve this open problem for $n=2$.
As the first part, in the present paper, the global small solution $u$ is established for $p_{s}(2+\mu)<p<p_{conf}(2,\mu)=\f{\mu+5}{\mu+1}$
and $\mu\in(0,1)\cup(1,2)$. Our main ingredients are to find the suitable conformal power $p_{conf}(2,\mu)$
and derive some new kinds of
spacetime-weighted $L^{q}_tL^{q}_x([1, \infty)\times \R^2)$ or $L^q_tL^\nu_rL^2_{\theta}([1, \infty)\times [0, \infty)\times [0, 2\pi])$
Strichartz  estimates for the solutions of linear generalized Tricomi equation $\p_t^2v-t^m\Delta v=F(t,x)$
($m>0$). In forthcoming papers, we shall show the global existence of small solution $u$ for
the remaining cases of $p>1$ and $\mu>0$.
\end{abstract}

\noindent
\textbf{Keywords.} Critical exponent,  conformal exponent, scale-invariant damping, global existence,

\qquad \quad weighted Strichartz estimate,  Fourier integral operator
\vskip 0.1 true cm

\noindent
\textbf{2010 Mathematical Subject Classification} 35L70, 35L65, 35L67

\vskip 0.2 true cm

\addtocontents{toc}{\protect\thispagestyle{empty}}
\tableofcontents

\section{Introduction}

\subsection{Open questions and main results}

Consider the following semilinear wave equation with scale-invariant damping
\begin{equation}\label{00-1}
\left\{ \enspace
\begin{aligned}
&\partial_t^2 u-\Delta u +\f{\mu}{t}\,\p_tu=|u|^p, \\
&u(1,x)=u_0(x), \quad \partial_{t} u(1,x)=u_1(x),
\end{aligned}
\right.
\end{equation}
where $t\ge 1$, $x=(x_1,...,x_n)\in\Bbb R^n$ ($n\ge 1$),
$\mu>0$,  $p>1$, $\p_i=\p_{x_i}$ ($1\le i\le n$), $\Delta=\p_1^2+\cdot\cdot\cdot+\p_n^2$,
$(u_0(x), u_1(x))\in C_0^{\infty}(\Bbb R^n)$, and $\text{supp $u_0$, supp $u_1$}\subset B(0, 1)$.
Denote by $p_{crit}(n,\mu)=\max\{p_s(n+\mu), p_f(n)\}$,
where the Strauss index $p_s(z)=\f{z+1+\sqrt{z^2+10z-7}}{2(z-1)}$ ($z>1$)
is a positive root of the quadratic algebraic equation
$(z-1)p^2-(z+1)p-2=0$ (see \cite{Strauss} for the Strauss index $p_s(n)$ of the semilinear wave equation
$\partial_t^2 u-\Delta u=|u|^p$), and $p_f(n)=1+\f{2}{n}$ is the Fujita index
(see \cite{Fuj} for the Fujita index $p_f(n)$ of the semilinear parabolic equation
$\partial_t u-\Delta u=|u|^p$).
In terms of \cite{Rei1} and \cite{Imai}, there is an interesting open question as follows:

{\bf Open question (A).} {\it For $\mu>0$, when $p>p_{crit}(n,\mu)$, the small data weak solution $u$
of \eqref{00-1} exists globally; otherwise, the solution $u$ may blow up in finite time when $1<p\le p_{crit}(n,\mu)$.
}

For $n=1$, Open question (A) has been solved in \cite{DA-0}.
In addition, it follows from direct computation that $p_{crit}(n,\mu)=p_s(n+\mu)$
for $0<\mu\le\bar \mu(n)$ and
$p_{crit}(n,\mu)=p_f(n)$ for $\mu\ge\bar\mu(n)$ with $\bar\mu(n)=\frac{n^2+n+2}{n+2}$.
Note that the blowup result in Open question (A)  has been shown for \(1<p\le p_{crit}(n,\mu)\)
(see \cite{IS}, \cite{LTW}, \cite{PR}, \cite{TL1}, \cite{TL2} and \cite{W1}).
However, there are few results on the global existence part in Open question (A) for $n\ge 2$.
For examples, for \(n=2\) when \(\mu\geq3\) or \(n\geq3\) when \(\mu\geq n+2\) and $p<\f{n}{n-2}$, the global
small solution $u$ of \eqref{00-1} with $p>p_f(n)$ is established in \cite{DA};
for $n=3$ when $\mu=2$, it is proved in \cite{Rei1} that \eqref{00-1} with $p>p_s(n+\mu)$ has a global small symmetric
solution; for odd $n\ge 5$ when $p>p_{s}(n+\mu)$ and $\mu=2$, the authors in \cite{Rei2}
obtain the global small solution with radial symmetry;
for \(n=3\) and the radial symmetrical case,
the small solution $u$ exists globally for \(\mu\in[\frac{3}{2},2)\) and \(p_s(3+\mu)<p\leq2\)
(see \cite{LZ}).
Therefore, in terms of Open question (A), so far one has such an explicit open question:

{\bf Open question (B).} {\it For $n\ge 2$, \eqref{00-1} has a global small data weak solution $u$
when $p>p_s(n+\mu)$ and  $0<\mu<\bar\mu(n)$ or when $p>p_f(n)$ and  $\mu\ge\bar\mu(n)$.}

Especially, for $n=2$ and by $\bar\mu(2)=2$, Open question (B) can be stated as

{\bf Open question (C).} {\it For $0<\mu<2$ and $p>p_s(2+\mu)$ or $\mu\ge 2$ and $p>2$, there exists a
global small data weak solution $u$ to the 2-D problem
\begin{equation}\label{00-1-2}
\left\{ \enspace
\begin{aligned}
&\partial_t^2 u-\Delta u +\f{\mu}{t}\,\p_tu=|u|^p, \qquad\qquad \qquad  t\ge 1, \\
&u(1,x)=u_0(x), \quad \partial_{t} u(1,x)=u_1(x),\quad x\in\Bbb R^2.
\end{aligned}
\right.
\end{equation}
}

We start to solve Open problem (C) systematically.
In the present paper, the global small solution $u$ will be established for $p_{s}(2+\mu)<p<p_{conf}(2,\mu)=\f{\mu+5}{\mu+1}$
and $\mu\in (0, 1)\cup(1, 2)$. In forthcoming papers, we will show the global existence of small solution $u$ for
$p\ge p_{conf}(2,\mu)$ and $\mu\in (0, 1)\cup(1, 2)$ in  \cite{HLWY}, for $p\ge 2$ and $\mu\ge 2$ in \cite{LY},
and for $p>p_{s}(3)=1+\sqrt 2$ and $\mu=1$ in  \cite{HLWY-1}. Our main results are
\begin{theorem}\label{TH-1}
Let $\mu\in(0,1)\cup(1,2)$ in \eqref{00-1-2}. When either one of the following conditions is satisfied\\
(i) $p_{s}(2+\mu)<p<p_{conf}(2,\mu)$ for $\mu\in (0,1)\cup(1, 2\sqrt{2}-1)$;\\
(ii) $p_{s}(2+\mu)<p<\f{2}{\mu-1}$ for $\mu\in [2\sqrt{2}-1, 2)$,\\
there exists a small  constant $\varepsilon_0>0$ such that as long as $\|u_0\|_{W^{\f{2+|\mu-1|}{2}+\delta,1}(\mathbb{R}^2)}+ \|u_1\|_{W^{\f{2-|\mu-1|}{2}+\delta,1}(\mathbb{R}^2)}$ $\le\ve_0$,  problem \eqref{00-1-2} admits a global weak solution $u$ with
\begin{equation}\label{con2}
t^{\theta(\mu, p)}\left( 1+\left|{\psi_{\mu}^2(t)}-|x|^2 \right|\right)^{\gamma}u\in L^{p+1}([1,+\infty) \times \mathbb{R}^2),
\end{equation}
where $\psi_{\mu}(t)=|\mu-1|t^{\f{1}{|\mu-1|}}$,
$\theta(\mu, p)=\f{2\mu}{(1-\mu)(p+1)}$ for $0<\mu<1$ and $\theta(\mu, p)=\f{3-\mu-p(\mu-1)}{(\mu-1)(p+1)}$ for $1<\mu<2$,
$0<\delta<\f{2-|\mu-1|}{2}-\f{1}{p+1}-\gamma$, and
the constant $\gamma$ fulfills
\begin{equation}\label{con1}
\f{1}{p(p+1)}<\gamma<\f{(\mu+1)p-(\mu+3)}{2(p+1)}.
\end{equation}
\end{theorem}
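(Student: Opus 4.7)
The strategy is three-step: first reduce \eqref{00-1-2} to an inhomogeneous generalized Tricomi equation, second establish weighted space-time Strichartz estimates for the latter, and third close a contraction in the Banach space $X$ cut out by the norm appearing in \eqref{con2}.

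I would begin with the Liouville-type substitution $u(t,x)=t^{-\mu/2}w(t,x)$, which eliminates the $\frac{\mu}{t}\partial_tu$ term at the price of an inverse-square potential $\frac{\mu(2-\mu)}{4t^2}w$ and turns the source into $t^{-\mu(p-1)/2}|w|^p$. A subsequent time rescaling $\tau=t^{1/|\mu-1|}$ together with a power renormalization $V=\tau^{\lambda}w$ of appropriate exponent converts the resulting operator into the generalized Tricomi operator $\partial_\tau^2-\tau^m\Delta$ with $m=2(1-|\mu-1|)/|\mu-1|$. Under this reduction the weight $|\psi_\mu(t)^2-|x|^2|$ in \eqref{con2} becomes the Tricomi interior distance $|\phi(\tau)^2-|x|^2|$ with $\phi(\tau)=\frac{2}{m+2}\tau^{(m+2)/2}$, the factor $t^{\theta(\mu,p)}$ becomes a simple power of $\tau$, and the two Sobolev indices $(2\pm|\mu-1|)/2+\delta$ for $(u_0,u_1)$ are exactly those needed to control the transformed Cauchy data after absorbing the Jacobians.

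Next, I would establish linear weighted Strichartz estimates for $\partial_\tau^2V-\tau^m\Delta V=F$, namely
\begin{equation*}
\bigl\|\tau^{\theta}\bigl(1+|\phi(\tau)^2-|x|^2|\bigr)^{\gamma}V\bigr\|_{L^{p+1}_{\tau,x}}\lesssim\bigl\|\tau^{\theta'}\bigl(1+|\phi(\tau)^2-|x|^2|\bigr)^{-\gamma_\ast}F\bigr\|_{L^{(p+1)/p}_{\tau,x}}+\mathcal{D}(u_0,u_1),
\end{equation*}
together with its mixed radial-angular sibling in $L^q_\tau L^\nu_r L^2_\theta$. This is done by writing $V$ through its explicit Fourier integral representation with phase $x\cdot\xi\pm\phi(\tau)|\xi|$ and confluent-hypergeometric amplitude, performing a Littlewood-Paley decomposition in $\xi$ together with a dyadic decomposition of the cone gap $\phi(\tau)-|x|$, applying stationary phase, and invoking $TT^*$ and complex interpolation. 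In two space dimensions one must additionally exploit the angular $L^2$ orthogonality, which is precisely what the $L^\nu_rL^2_\theta$ norm encodes and what supplies the extra integrability beyond the purely radial estimates.

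Finally, with the linear theory in hand, I would iterate on $\mathcal{N}(v):=L_0(u_0,u_1)+L_0^{-1}(|v|^p)$, where $L_0$ is the transformed Tricomi evolution. The key algebraic step is the H\"older-type inequality
\begin{equation*}
\bigl\|\tau^{\theta'}(1+|\phi^2-|x|^2|)^{-\gamma_\ast}|v|^p\bigr\|_{L^{(p+1)/p}}\le\bigl\|\tau^{\theta}(1+|\phi^2-|x|^2|)^{\gamma}v\bigr\|_{L^{p+1}}^{p},
\end{equation*}
whose solvability in $(\theta,\gamma)$ is what forces the two-sided bound \eqref{con1}: the lower bound $\gamma>1/[p(p+1)]$ ensures local integrability of the source against the negative-weight side, while the upper bound $\gamma<((\mu+1)p-(\mu+3))/(2(p+1))$ is the conformal-subcriticality condition and produces a non-empty $\gamma$-window precisely when $p<p_{conf}(2,\mu)$; the additional restriction $p<2/(\mu-1)$ in case (ii) arises because the admissible derivative loss $\delta$ in the data norm first becomes non-positive there. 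Self-mapping and contraction of $\mathcal{N}$ on a small ball in $X$ then follow from the smallness of $\varepsilon_0$ combined with the standard estimate $||v|^p-|w|^p|\lesssim(|v|^{p-1}+|w|^{p-1})|v-w|$. The decisive obstacle is the second step: the Fourier integral operator for $\partial_\tau^2-\tau^m\Delta$ has a canonical relation that degenerates along $|x|=\phi(\tau)$ and at $\tau=\tau_0$, so the classical FIO regularity results are not sharp enough to reach the endpoint exponents of \eqref{con1}; only the dyadic decomposition adapted to $\phi(\tau)-|x|$, combined with the angular $L^2$ orthogonality encoded in the mixed norm, makes the sharp range $p_s(2+\mu)<p<p_{conf}(2,\mu)$ achievable.
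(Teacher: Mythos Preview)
Your three-step strategy (reduce to Tricomi, establish weighted Strichartz, contract) matches the paper's, and your derivation of the two sides of \eqref{con1} from the H\"older balance is correct. Two points, however, diverge from the paper.

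\textbf{The reduction.} The paper does \emph{not} pass through the Liouville substitution $u=t^{-\mu/2}w$; that route introduces the potential $\frac{\mu(2-\mu)}{4t^2}w$, which you then have to undo. Instead, for $0<\mu<1$ the paper simply changes time $T=\frac{2}{m+2}t^{(m+2)/2}$ with $m=\frac{2\mu}{1-\mu}$, which converts $\partial_t^2-\Delta+\frac{\mu}{t}\partial_t$ directly into $\partial_T^2-T^m\Delta$ with no zero-order term; the nonlinearity becomes $T^m|u|^p$, i.e.\ $\alpha=m$ in \eqref{YH-4}. For $1<\mu<2$ one first sets $v=t^{\mu-1}u$ (which replaces the damping coefficient $\mu$ by $2-\mu\in(0,1)$) and then applies the same time change with $m=\frac{2(2-\mu)}{\mu-1}$, landing on $\alpha=m+1-p$. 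Your composite map may well be equivalent, but it is a detour, and you do not verify that the potential actually cancels.

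\textbf{The Strichartz step.} More importantly, the angular mixed norm $L^q_\tau L^\nu_r L^2_\theta$ is \emph{not used} in the proof of Theorem~\ref{TH-1}. The entire argument runs in the pure weighted $L^{p+1}$ scale: Lemma~\ref{th2-1} for the homogeneous evolution and Theorem~\ref{th2-2} for the Duhamel part. The angular orthogonality is the engine of Theorem~\ref{TH-2}, which handles the complementary range $\frac{2}{\mu-1}\le p<p_{\conf}(2,\mu)$ where $\alpha\le-1$ and Theorem~\ref{th2-2} fails. The actual two-dimensional novelty in Theorem~\ref{TH-1} is different: at the $q=2$ endpoint (Subsection~\ref{Sub-4.2}) the paper replaces the symmetric weight $(\phi_m^2(t)-|x|^2)^{1/2}$, which sufficed for $n\ge3$, by the asymmetric weight $(\phi_m(t)+|x|)^{1/2}(\phi_m(t)-|x|)^{1/2-\frac{1}{m+2}-\epsilon}$, and this extra $\frac{1}{m+2}$ of decay along the cone is what closes the estimate in dimension two. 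Your reading of the restriction $p<\frac{2}{\mu-1}$ in case~(ii) as coming from positivity of the $\delta$-window is correct but indirect; in the paper's Tricomi variables it is precisely the hypothesis $\alpha>-1$ of Theorem~\ref{th2-2}.
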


\begin{remark}\label{W-1}
{\it It follows from $p>p_{s}(2+\mu)$ and direct computation that
$\f{1}{p(p+1)}<\f{(\mu+1)p-(\mu+3)}{2(p+1)}$ holds.
This implies that the condition \eqref{con1} makes sense.
In addition, by conditions (i)-(ii) in Theorem \ref{TH-1} and \eqref{con1}, one can verify  $\f{2-|\mu-1|}{2}-\f{1}{p+1}-\gamma>0$
and then the choice of $\delta>0$ is reasonable.}
\end{remark}

\begin{theorem}\label{TH-2}
For $\mu\in [2\sqrt{2}-1, 2)$ and $\f{2}{\mu-1}\leq p<p_{conf}(2,\mu)$, when the smooth initial data $(u_0, u_1)$ satisfies
\begin{equation}\label{q-1}
  \sum_{|\beta| \leq 1}(\|Z^{\beta} u_0\|_{\dot{H}^{s}\left(\mathbb{R}^{2}\right)}+\|Z^{\beta} u_1\|_{\dot{H}^{s-(\mu-1)}\left(\mathbb{R}^{2}\right)})\leq\varepsilon_0,
\end{equation}
where $\varepsilon_0>0$ is a small constant, $s=1-\frac{\mu+1-p(\mu-1)}{p-1}$ and $Z=\left\{\partial_{1}, \partial_{2}, x_{1} \partial_{2}-x_{2} \partial_{1}\right\}$, problem \eqref{00-1-2} admits a global solution $u$ such that for $|\beta| \leq 1$,
$$
t^{\f{3-\mu-p(\mu-1)}{(\mu-1)(p+1)}}Z^{\beta} u \in L_{t}^{q} L_{r}^{\nu} L_{\theta}^{2}\left([1,+\infty)\times [0,\infty)\times [0,2\pi]\right) \cap L_{t}^{\infty} \dot{H}^{s}([1,+\infty)\times\mathbb{R}^2),
$$
where $(x_1,x_2)=(r cos\theta, r sin\theta)$ with $r=\sqrt{x_1^2+x_2^2}$ and $\th\in [0, 2\pi]$,  $\frac{1}{q}+\frac{2}{\nu(\mu-1)}=\frac{\mu+1-p(\mu-1)}{(p-1)(\mu-1)}-\frac{3-\mu-p(\mu-1)}{(p+1)(\mu-1)}$,
and the constant $\nu$ is chosen as: if $2\sqrt{2}-1\leq\mu<\mu_1=1.977$, then $\nu=p+\f{1}{3}$;
if $\mu_1\leq\mu<2$, then $\nu=p$
for $\f{2}{\mu-1}\leq p<p^*(\mu)$ and $\nu=p+\f{1}{3}$ for $p^*(\mu)\leq p<p_{conf}(2,\mu)$ with
$p^*(\mu)$ being the positive root of the quadratic algebraic equation
$3(\mu+1)p^2-2(\mu+4)p-(\mu+11)=0.$
\end{theorem}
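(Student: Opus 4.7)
My plan is to prove Theorem~\ref{TH-2} by a contraction-mapping argument in the weighted mixed-norm Strichartz space
\begin{equation*}
X=\Bigl\{u:\ \sum_{|\beta|\le 1}\Bigl(\bigl\|t^{\theta(\mu,p)}Z^\beta u\bigr\|_{L^q_tL^\nu_rL^2_\theta}+\bigl\|Z^\beta u\bigr\|_{L^\infty_t\dot H^s}\Bigr)\le C\ve_0\Bigr\},
\end{equation*}
where $\theta(\mu,p)=\f{3-\mu-p(\mu-1)}{(\mu-1)(p+1)}$ and $s,q,\nu$ are as in the statement. The starting point is a Liouville-type reduction of \eqref{00-1-2}: via $u(t,x)=t^{1-\mu}v(\tau,x)$ with the new time variable $\tau=t^{\mu-1}$ (so that $\psi_\mu(t)=(\mu-1)\tau$), the scale-invariant damping is absorbed and $v$ satisfies, up to harmless lower-order terms, a generalized Tricomi equation $\p_\tau^2v-c_\mu\tau^{m}\Delta v=G(\tau,x,v)$ with $m=\f{2(2-\mu)}{\mu-1}>0$ on $\mu\in(1,2)$. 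The weighted Strichartz inequalities for $\p_t^2w-t^m\Delta w=F$ developed in the preceding sections apply directly to this reduced equation, and the appearance of $\psi_\mu(t)$ in \eqref{con2} as well as the asymmetric regularity $\dot H^s\times\dot H^{s-(\mu-1)}$ in \eqref{q-1} are precisely dictated by this reduction and by the $\tau$-energy norm of the Tricomi problem.

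\textbf{Linear step with vector fields.} I would apply the Tricomi Strichartz estimates to each $Z^\beta u$ with $|\beta|\le 1$. Since $Z=\{\p_1,\p_2,\Omega\}$, $\Omega=x_1\p_2-x_2\p_1$, commutes with $\Delta$ and with the time-only coefficient $\mu/t$, no new first-order terms arise under commutation, and the admissibility identity
\begin{equation*}
\f{1}{q}+\f{2}{\nu(\mu-1)}=\f{\mu+1-p(\mu-1)}{(p-1)(\mu-1)}-\f{3-\mu-p(\mu-1)}{(p+1)(\mu-1)}
\end{equation*}
is the Strichartz scaling relation at data regularity $s=1-\f{\mu+1-p(\mu-1)}{p-1}$. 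The Cauchy data are handled by \eqref{q-1}, while the forcing $|u|^p$ contributes, after one vector field, a dual-side quantity bounded pointwise by $|u|^{p-1}|Z^\beta u|$; the $L^\infty_t\dot H^s$ piece comes from the classical (weighted) energy identity applied to the reduced equation.

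\textbf{Nonlinear step.} The heart of the argument is the bound
\begin{equation*}
\bigl\|t^{\theta(\mu,p)}|u|^{p-1}Z^\beta u\bigr\|_{L^{q'}_tL^{\nu'}_rL^2_\theta}\ \lesssim\ \|u\|_X^{p-1}\bigl\|t^{\theta(\mu,p)}Z^\beta u\bigr\|_{L^q_tL^\nu_rL^2_\theta}.
\end{equation*}
On each circle of fixed radius $r$ I use the one-dimensional Sobolev embedding $H^{\sigma}(S^1)\hookrightarrow L^\infty(S^1)$ for $\sigma>\tfrac12$ (and hence $H^1_\theta\hookrightarrow L^\infty_\theta$) to place $|u|^{p-1}$ in $L^\infty_\theta$ at the cost of $\|u\|_{L^2_\theta}+\|\Omega u\|_{L^2_\theta}$, which is absorbed into the $Z$-norm; this disposes of the $L^2_\theta$ slot. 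In the radial and time variables I then apply H\"older, and balancing the resulting exponent conditions against the Strichartz scaling is what forces the dichotomy in $\nu$. For $2\sqrt2-1\le\mu<\mu_1$ the single choice $\nu=p+\tfrac13$ closes the estimate. For $\mu_1\le\mu<2$ the admissible range splits at the positive root $p^*(\mu)$ of $3(\mu+1)p^2-2(\mu+4)p-(\mu+11)=0$: below $p^*(\mu)$ one must take $\nu=p$, since otherwise the radial weight from the Tricomi Strichartz estimate becomes too singular at $r=0$; above $p^*(\mu)$ the choice $\nu=p+\tfrac13$ is again admissible.

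\textbf{Main obstacle.} With the two estimates in hand, the map sending $u$ to the solution of the inhomogeneous linear problem with source $|u|^p$ and data $(u_0,u_1)$ is a contraction on a small ball of $X$ and yields the claimed global solution. The principal technical obstacle is precisely the nonlinear estimate in the regime $\mu$ close to $2$ with $p\ge\f{2}{\mu-1}$: here the isotropic weighted $L^{p+1}_{t,x}$ framework used in Theorem~\ref{TH-1} breaks down because the weight in \eqref{con2} becomes too singular on the characteristic cone $\psi_\mu(t)=|x|$, and one is forced to work simultaneously with the angular vector field $\Omega$, the anisotropic decomposition $L^\nu_rL^2_\theta$, and the fractional splitting $\nu=p$ vs.~$\nu=p+\tfrac13$ across $p^*(\mu)$. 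A secondary delicate point is verifying, throughout this splitting, that the Tricomi Strichartz inequalities hold for the specific endpoint exponents dictated by the scaling relation, in particular at the critical endpoint $\nu=p$ which is exactly where the radial Hardy weight is saturated.
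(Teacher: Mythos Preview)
Your overall strategy coincides with the paper's: reduce \eqref{00-1-2} for $1<\mu<2$ to a semilinear generalized Tricomi equation, apply the angular mixed-norm Strichartz estimates of Section~\ref{E-8} and Section~\ref{E-5} to $Z^\beta u$, use the Sobolev embedding $H^1_\theta\hookrightarrow L^\infty_\theta$ (exactly \eqref{q-37}) to handle $|u|^{p-1}$, and close by Picard iteration. However, several of your concrete claims are wrong and would prevent the argument from closing as written.

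First, your reduction is mis-stated: with $v=t^{\mu-1}u$ one obtains \eqref{b-8}, and the further time change that produces $\partial_\tau^2 v-\tau^m\Delta v=\tau^{m-p+1}|v|^p$ with $m=\tfrac{2(2-\mu)}{\mu-1}$ requires $\tau\propto t^{1/(\mu-1)}$ (so that $\psi_\mu(t)=(\mu-1)t^{1/(\mu-1)}\sim\phi_m(\tau)$), not $\tau=t^{\mu-1}$; with your $\tau$ no clean Tricomi equation emerges. Second, the dual exponent pair in your nonlinear estimate cannot be $(q',\nu')$, the H\"older conjugates of $(q,\nu)$: the inhomogeneous Strichartz inequality (Lemma~\ref{lem2}) takes input in $L_t^{\tilde q'}L_r^{\tilde\nu'}L_\theta^2$, and the paper's choice $q=p\tilde q'$, $\nu=p\tilde\nu'$ is precisely what makes the H\"older step $\|(t^\theta u)^p\|_{L^{\tilde q'}L^{\tilde\nu'}}=\|t^\theta u\|_{L^qL^\nu}^p$ close; with literal $(q',\nu')$ the exponents do not match unless $q=p+1$, which is not the case here. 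Third, your heuristic for the $\nu$-dichotomy is not the actual mechanism. The split at $p^*(\mu)$ has nothing to do with a radial Hardy weight at $r=0$; it comes from checking the system \eqref{q-28}--\eqref{q-33}: for $\nu=p+\tfrac13$ the Strichartz admissibility inequality \eqref{q-30} fails when $p$ is below the root of $G_m(p)=(p+1)\bigl[(3m+9)p^2-(5m+12)p-(6m+13)\bigr]=0$ (which, after $m=\tfrac{2(2-\mu)}{\mu-1}$, becomes the stated quadratic in $\mu$), while for $\nu=p$ it is the constraint $q\le 2p$ in \eqref{q-31} that fails for large $p$. The correct argument is the explicit verification in Section~\ref{E-5} (Cases~1 and~2), not a weight-singularity consideration.
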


\begin{remark}\label{JY-4-1}
{\it Collecting the results in Theorems \ref{TH-1}-\ref{TH-2},
one knows that the Open problem (C) holds
for $p_s(2+\mu)<p<p_{conf}(2,\mu)$ when $\mu\in (0,1)\cup(1,2)$. }
\end{remark}

\begin{remark}\label{JY-3}
{\it When $\mu=1$, by the Liouville transformation $v=t^{\f12}u$, the equation in \eqref{00-1-2}
becomes $\p_t^2v-\Delta v+\f{1}{4t^2}v=t^{\f{1-p}{2}}|v|^p$, which is a semilinear Klein-Gordon equation
with time-dependent coefficient. So far there are no results on the global existence of $v$ and further on
the solution $u$ in \eqref{00-1-2} with $\mu=1$. In \cite{HLWY-1}, we will solve this problem for $\mu=1$.}
\end{remark}

\begin{remark}\label{JY-8-1}
{\it For $n\ge 3$, $\bar\mu(n)>2$ holds.
By the method in the paper, we can prove Open question (B) for $n\ge 3$
and $\mu\in(0,1)\cup(1,2)$.}
\end{remark}

\begin{remark}\label{JY-8-2}
{\it In the case of $1<\mu<2$, the resulting semilinear generalized Tricomi equation from \eqref{00-1-2}
will have different critical power properties for \(p_s(2+\mu)<p<\f{2}{\mu-1}\)
and \(\frac{2}{\mu-1}\leq p<p_{\conf}(2,\mu)\) when \(2\sqrt{2}-1\leq\mu<2\)
(see the explanations in Remark \ref{JY-8-3} below). This leads to that we require to
establish different kinds of Strichartz estimates for the corresponding linear Tricomi equations.
Therefore, the solution of Open question (C) for $p_{s}(2+\mu)<p<p_{conf}(2,\mu)$ and $\mu\in (0,1)\cup(1,2)$
is divided into Theorem \ref{TH-1} and Theorem \ref{TH-2}, respectively.}
\end{remark}

\subsection{Physical backgrounds on some wave operators with  scale-invariant  damping}

At first, we point out that the linear wave operator $\partial_t^2-\Delta  +\f{\mu}{t}\p_t$ in \eqref{00-1}
admits an interesting physical background
which comes from the compressible Euler equations with time-dependent damping. In fact,
consider the 3-D compressible isentropic Euler equations
\begin{equation}\label{1.1-L}
\left\{ \enspace
\begin{aligned}
&\p_t\rho+\opdiv(\rho u)=0,\\
&\p_t(\rho u)+\opdiv(\rho u\otimes
  u+p\,\RN{1}_{3})=-\f{\mu}{(1+t)^{\la}}\rho u,\\
&\rho(0,x)=\bar \rho+\ve\rho_0(x),\quad u(0,x)=\ve u_0(x),
\end{aligned}
\right.
\end{equation}
where $x=(x_1, x_2, x_3)$, $\rho$, $u = (u_1, u_2, u_3)$, and
$p$ stand for the density, velocity, and pressure, respectively,
$\RN{1}_{3}$ is the $3\times 3$ identity matrix, $\mu>0$, $\la\ge 0$ and
$\bar\rho>0$ are constants, $u_0=(u_{1,0},u_{2,0}, u_{3,0})$, $(\rho_0, u_0)\in C_0^{\infty}(\R^3)$, $(\rho_0,
u_0)\not\equiv 0$, $\rho(0, x)>0$, and $\ve>0$ is sufficiently
small. In addition, the state equation of gases is
$p(\rho)=A\rho^{\g}$ with $A>0$ and $\g>1$ being constants,
and the gases are assumed to be irrotational, namely, $\operatorname{curl} u_0=
\left(\p_2u_{3,0}-\p_3u_{2,0}, \p_3u_{1,0}-\p_1u_{3,0},
\p_1u_{2,0}-\p_2u_{1,0}\right)\equiv 0$. In this case,
one can know that $\operatorname{curl} u(t,x)\equiv 0$ holds for any $t\ge 0$
as long as the smooth solution $(\rho, u)(t,x)$ exists.
By introducing a scalar potential function $\chi=\chi(t,x)$ with
$u=\na_x\chi$, we substitute $u=\na_x\chi$
into the second equation of \eqref{1.1-L} to obtain
\begin{equation}\label{1.2-L}
  \p_t\chi+\f12|\na_x\chi|^2+h(\rho)+\f{\mu}{(1+t)^{\la}}\,\chi=0,
\end{equation}
where $\ds h'(\rho)=c^2(\rho)/\rho$ with $c(\rho)=\sqrt{p'(\rho)}$ and
$h(\bar\rho)=0$. Without loss of generality, $c(\bar\rho)=1$ is supposed.
By $h'(\rho)>0$ for $\rho>0$ and the implicit function theorem, one has from \eqref{1.2-L} that
\begin{equation}\label{1.3-L}
  \rho=h^{-1}\big(-\big(\p_t\chi+\f12\,|\na_x\chi|^2
  +\f{\mu}{(1+t)^{\la}}\,\chi\big)\big),
\end{equation}
where $\bar\rho=h^{-1}(0)$ and $h^{-1}$ is the inverse function of
$h=h(\rho)$.
Inserting \eqref{1.3-L} into the first equation of
\eqref{1.1-L} derives
\begin{equation}\label{1.4-L}
\begin{split}
&\p_t^2\chi-c^2(\rho)\Delta\chi+2\sum_{k=1}^3\p_k\chi
  \p_{tk}^2\chi
  +\sum_{i,k=1}^3\p_i\chi\p_k\chi\p_{ik}^2\chi
  +\f{\mu}{(1+t)^{\la}}\,|\na_x\chi|^2+\p_t\big(\f{\mu}{(1+t)^{\la}}\,\chi\big)=0.
\end{split}
\end{equation}
Let $\ds \psi=
\f{\chi}{(1+t)^\la}$. It follows from \eqref{1.4-L} and $c(\bar\rho)=1$ that
\begin{equation}\label{2.6-L}
  \square\psi+\f{\mu}{(1+t)^\la}\p_t\psi+\f{2\la}{1+t}\p_t\psi
  -\f{\la(1-\la)}{(1+t)^2}\psi=Q(\psi,\p\psi, \p^2\psi),
\end{equation}
where
\begin{align*}\label{2.7-L}
Q(\psi,\p\psi, \p^2\psi)=&
(c^2(\rho)-1)\Delta\psi-2(1+t)^\la\p_t\na_x\psi\cdot\na_x\psi-2\la(1+t)^{\la-1}|\na_x\psi|^2
\\ \no
&-\mu|\na_x\psi|^2-(1+t)^{2\la}\ds\sum_{1\le i,j\le
  3}\p_i\psi\p_j\psi\p_{ij}^2\psi.
\end{align*}
Especially, for $\lambda=1$, \eqref{2.6-L} becomes
\begin{equation}\label{2.8-L}
  \square\psi+\f{\mu+2}{1+t}\p_t\psi=Q(\psi,\p\psi, \p^2\psi).
\end{equation}
Thus, the linear wave operator $\square+\f{\mu+2}{1+t}\p_t$ with  scale-invariant  damping
naturally  appears in \eqref{2.8-L}.
Moreover, the detailed analyses on $\square+\f{\mu+2}{1+t}\p_t$ and  the resulting time-weighted energy estimates
play key roles in proving the global existence of the smooth solution $(\rho, u)$ in \eqref{1.1-L} (see \cite{Hou-1}-\cite{Hou-2}
and \cite{Chen-Mei}). On the other hand, analogous scale-invariant  damping
can appear in the irrotational bipolar Euler-Poisson system
with time-dependent damping (see \cite{Mei-2} and so on).

Secondly, we discuss the global existence and stability of the smooth supersonic polytropic gases
in a 3-D infinitely long  divergent nozzle (see Figure 1 and Figure 2 below). Assume that the 3-D infinitely long
divergent straight nozzle is given by  $\O_0=\{x\in \mathbb{R}^3: x_1^2+x_2^2\le \tan^2\vp_0 x_3^2,
x_1^2+x_2^2+x_3^2\ge1, x_3\ge cos\vp_0\}$,
where $\vp_0\in (0, \f{\pi}{2})$ is a fixed constant (see Figure 2). The flow in $\Omega_0$
is described by the following 3-D steady  compressible isentropic Euler equations
\beq
\bec
\ss_{j=1}^3\p_j(\rho v_j)=0,\\
\ss_{j=1}^3\p_j(\rho v_iv_j)+\p_ip=0,~~i=1,2,3,
\eec\label{1-1-L}\eeq
where $\rho, v=(v_1, v_2, v_3)$ and $p$ represent the density, velocity and pressure
of polytropic gases, respectively. In addition,  the state equation is $p=\rho^{\g}$ for $\g>1$,
and the local sound speed is $c(\rho)=\sqrt{p'(\rho)}=\sqrt{\g\rho^{\g-1}}$.
Moreover, for simplicity, one can assume that the following
Bernoulli's law holds
\begin{align}\label{1-2-L}
\f12|v|^2+\f{c^2(\rho)}{\g-1}=B,
\end{align}
where $|v|^2=v_1^2+v_2^2+v_3^2>c^2(\rho)$, and $B>0$ is the Bernoulli's  constant.
Note that for $\rho(x)=\hat\rho(r)$ and $v(x)=\ds\f{x}{r}\hat V(r)$ with $r=\sqrt{x_1^2+x_2^2+x_3^2}\ge 1$, \eqref{1-1-L} with \eqref{1-2-L}
can have such a supersonic symmetric solution $(\hat \rho(r), \hat V(r))$ (see \cite{Xu-Yin-1} and \cite{Xu-Yin-2})
\begin{equation*}\label{E-L}
\left\{
\begin{aligned}
&(r^2\hat \rho\hat  V)'(r)=0,\\
&\ds\f{1}{2}{\hat V}^2(r)+\f{\gamma}{\gamma-1}{\hat\rho}^{\gamma-1}(r)=1,\\
&\hat \rho(1)=\rho_0,\quad \hat V(1)=q_0,\quad q_0>c(\rho_0).
\end{aligned}
\right.
\end{equation*}

\begin{center}
\includegraphics[width=18.5cm,height=4.5cm]{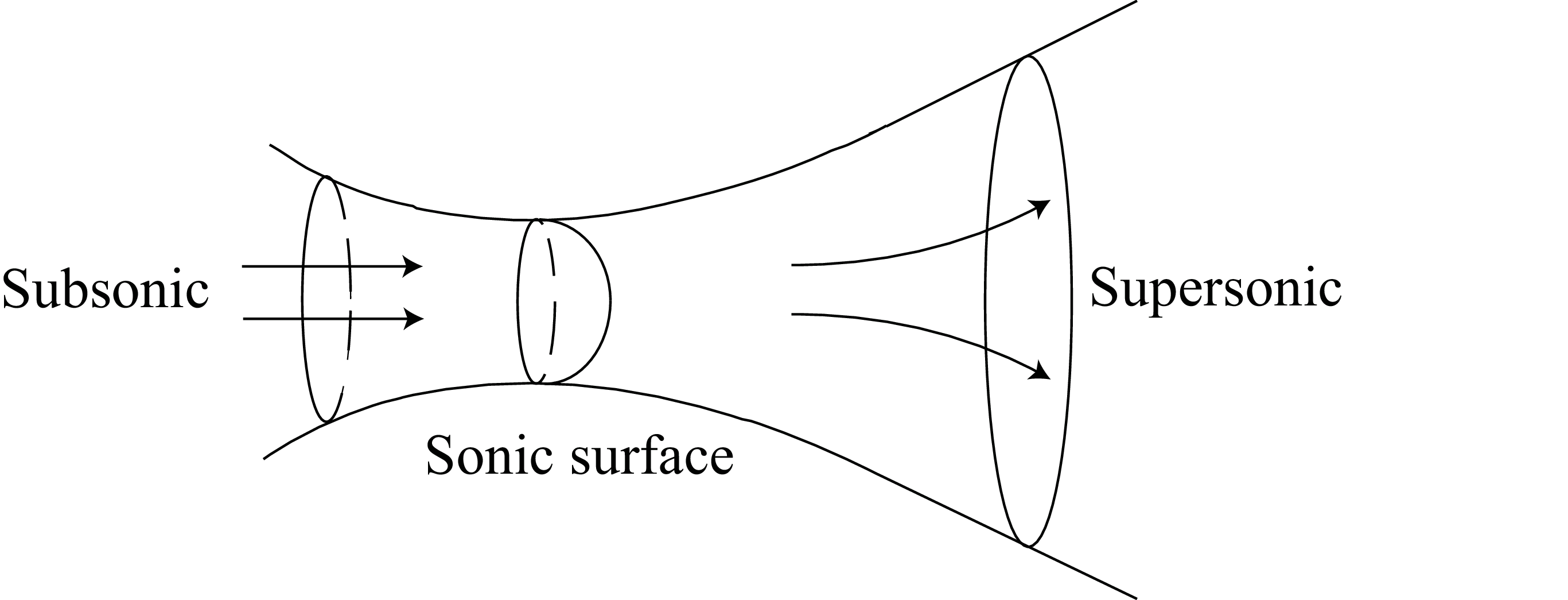}
\end{center}

\centerline{\bf Figure 1. Supersonic flow in a 3-D infinite long de Laval nozzle}

\vskip 0.4 true cm
\begin{center}
\includegraphics[width=11cm,height=4cm]{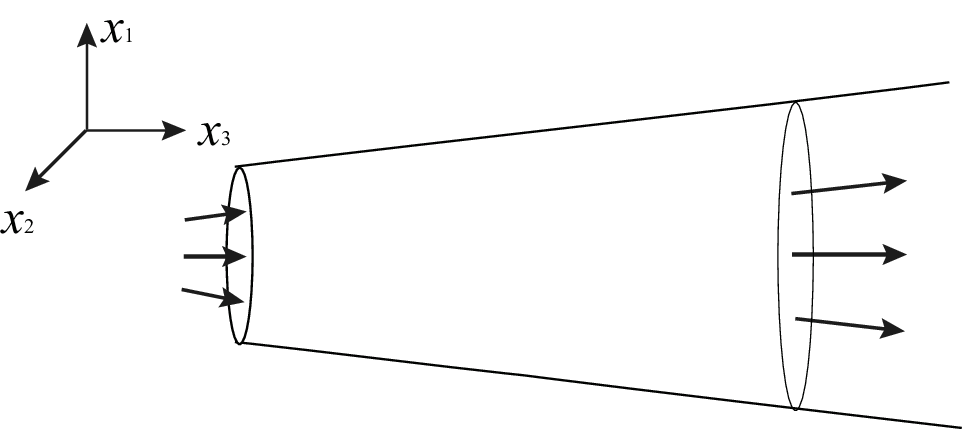}
\end{center}

\centerline{\bf Figure 2. Supersonic flow in a 3-D  infinitely long  divergent straight nozzle}
\vskip 0.4 true cm

If the supersonic flow in $\Omega_0$ is irrotational, then one can introduce a potential function $\Phi(x)$
with $v=\nabla_x\Phi$ such that \eqref{1-1-L} becomes the following second order quasilinear
wave equation
\begin{align}\label{1.12-L}
\ds\sum_{i=1}^3((\p_i\Phi)^2-c^2(\rho))\p_i^2\Phi+2 \ds\sum_{1\le i<j\le
3}\p_i\Phi\p_j\Phi\p_{ij}^2\Phi=0, \qquad x_3\ge 0.
\end{align}
By the linearization of \eqref{1.12-L} around the background solution $(\hat \rho(r), \hat V(r))$,
it follows from (3.1) of \cite{Xu-Yin-1} that
\begin{align}\label{3.1-L}
\mathcal {L}\dot{\Phi}=\dot
f\quad\text{in $\O_0$},
\end{align}
where $\dot{\Phi}=\Phi-\hat{\Phi}$ with $\p_r\hat{\Phi}=\hat V(r)$,  the linear wave operator $\mathcal {L}= \p_r^2-\f{1}{(1+r)^{2(\g-1)}}(\p_1^2+\p_2^2)+\f{2(\g-1)}{1+r}\p_r$, and $\dot f$ is of the second order error term.
Obviously, $\mathcal {L}\dot{\Phi}$ has the scale-invariant damping $\f{2(\g-1)}{1+r}\p_r\dot{\Phi}$.
Based on the delicate and involved weighted energy estimates on $\mathcal {L}\dot{\Phi}$,
when $1<\gamma<\f{11}{7}$, the authors in \cite{Xu-Yin-1}-\cite{Xu-Yin-2}
establish the global existence and stability of the smooth supersonic solution $(\rho, v)$ of \eqref{1-1-L} in $\Omega_0$.
On the other hand, the long time behavior
of solutions to the $n-$D ($n\ge 2$) linear equation $\partial_t^2 v-\Delta v+\f{\mu}{t^{\beta}}\p_tv=0$ for $\beta>1$
and $0<\beta<1$ are different, which corresponds to the time-decay rate $t^{-\f{n-1}{2}}$
of linear free wave equations and the time-decay rate $t^{-\f{n}{2}}$ of linear free parabolic equations, respectively
(see \cite{Wirth-1}-\cite{Wirth-2}). This means that the wave operator $\partial_t^2-\Delta+\f{\mu}{t}\p_t$
is the critical case for the time-dependent damping.

\subsection{Reformulation and more general global existence results}\label{E-3}

For $0<\mu<1$ in \eqref{00-1-2}, by $\mu=\f{k}{k+1}$ with $k\in (0,\infty)$
and $T=t^{k+1}/(k+1)$ as in \cite{Rei1}, the equation in \eqref{00-1-2} is essentially equivalent to
$\partial_T^2 u-T^{2k}\Delta u=T^{2k} |u|^p$. For $1<\mu<2$, by $v=t^{\mu-1}u$ and $2-\mu=\f{\t k}{\t k+1}$ with $\t k\in (0,\infty)$
and $T=t^{\t k+1}/(\t k+1)$, the equation in \eqref{00-1-2} can become
$\partial_T^2 u-T^{2\t k}\Delta u=T^{\al_0} |u|^p$
with $\al_0=2\t k+1-p$. Therefore, in order to solve Open question (C) with $\mu\in(0,1)\cup(1,2)$,
it is required to study the global solution of the following 2-D problem
\begin{equation}\label{YH-4}
\begin{cases}\partial_t^2 u-t^{m} \Delta u=t^\alpha|u|^p, \\
u(1, x)=u_0(x), \p_t u(1, x)=u_1(x),
\end{cases}
\end{equation}
where $m>0$, $\alpha\in\Bbb R$, $p>1$, $u_0(x), u_1(x)\in C_0^{\infty}(\Bbb R^2)$ and
supp $u_0$, supp $u_1\in B(0, 1)$.
When $\al=0$, it has been shown that there exists a critical power \(p_{\crit}(2,m)>1\) such that
when $p>p_{\crit}(2,m)$, the small data solution $u$ of \eqref{YH-4} exists globally;
when $1<p\le p_{\crit}(2,m)$, the solution $u$ may blow up in finite time,
where \(p_{\crit}(2,m)\) is the positive root of
\begin{equation*}\label{equ:p2}
(m+1)p^2-3p-(m+2)=0,
\end{equation*}
the reader may see \cite{HWY1,HWY2,HWY3,HWY4}.
In addition, it has been shown in Theorem 1.1 of \cite{PR} that there exists a critical exponent $p_{crit}(2,m,\alpha)>1$ for $\al>-2$
such that when $1<p\le p_{crit}(2,m,\alpha)$, the solution of \eqref{YH-4} can blow up in finite time with some suitable
choices of $(u_0,u_1)$, where
$p_{crit}(2,m,\alpha)=\max \left\{p_1(2,m,\alpha), p_2(2,m,\alpha)\right\}$
with $p_1(2,m,\alpha)=1+\frac{2+\alpha}{m+1}$ and $p_2(2,m,\alpha)$ being the positive root of the quadratic equation
$(m+1)p^2-(3+2\alpha)p-(m+2)=0$. Obviously, $p_{crit}(2,m,0)=p_{\crit}(2,m)$ holds. In addition,
it is not difficult to verify that when $\al>-1$, $p_{crit}(2,m,\alpha)=p_2(2,m,\alpha)$
holds; when $-2<\al\le -1$, $p_{crit}(2,m,\alpha)=p_1(2,m,\alpha)$ holds. On the other hand,
the conformal exponent $p_{conf}(2,m,\alpha)$ for \eqref{YH-4} can be determined as
(see Subsection \ref{E-4} below)
\begin{equation}\label{equ:conf}
p_{\conf}(2,m,\alpha)=\frac{m+2\alpha+5}{m+1}.
\end{equation}
For problem \eqref{YH-4} with $p_{crit}(2,m,\alpha)<p<p_{\conf}(2,m,\alpha)$ and suitable scopes
of $m$ and $\alpha$, we now establish some global existence results of small data solution $u$,
which will lead to the conclusions in Theorems \ref{TH-1}-\ref{TH-2}.

\begin{theorem}\label{thm:global existence-L}
For \(m\in(0,\infty)\), \(-1<\alpha\leq m\) and $p_{crit}(2,m,\alpha)<p<p_{conf}(2,m,\alpha)$,
there exists a constant \(\varepsilon_0>0\) such that when $\parallel u_0\parallel_{W^{\f{m+3}{m+2}+\delta,1}(\mathbb{R}^2)}+\parallel u_1\parallel_{W^{\f{m+1}{m+2}+\delta,1}(\mathbb{R}^2)}\leq\varepsilon_0$,
\eqref{YH-4} has a  global weak solution $u$ which satisfies
\begin{equation}\label{equ:1.3}
\left(1+\big|\phi_m^2(t)-|x|^2\big|\right)^{\gamma}t^{\frac{\alpha}{p+1}}u\in L^{p+1}([1,\infty)\times \mathbb{R}^2),
\end{equation}
where and below $\phi_m(t)=\f{2}{m+2}t^{\f{m+2}{2}}$, \(0<\delta<\frac{m+1}{m+2}-\gamma-\frac{1}{p+1}\),
and the positive constant $\gamma$ fulfills
\begin{equation}\label{equ:1.4}
\f{1}{p(p+1)}<\gamma<\f{(m+1)p-(2\alpha+3)}{(m+2)(p+1)}.
\end{equation}
\end{theorem}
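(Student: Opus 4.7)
The plan is to run a contraction mapping argument in the weighted space
$$
X = \Bigl\{ u : \|u\|_X := \bigl\|(1+|\phi_m^2(t)-|x|^2|)^{\gamma}\, t^{\alpha/(p+1)}\, u\bigr\|_{L^{p+1}([1,\infty)\times\R^2)} < \infty \Bigr\},
$$
splitting any candidate as $u = u^{L} + \mathcal{N}(u)$, where $u^{L}$ solves the homogeneous linear generalized Tricomi equation $\p_t^2 v - t^m\Delta v = 0$ with Cauchy data $(u_0,u_1)$ at $t=1$ and $\mathcal{N}(u)$ solves $\p_t^2 w - t^m\Delta w = t^\alpha |u|^p$ with zero initial data. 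The heart of the argument is the reduction of both pieces to weighted Strichartz estimates adapted to the Tricomi characteristic surface $|x|=\phi_m(t)$.

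Concretely, I will establish two estimates for the linear Tricomi operator $\p_t^2 - t^m\Delta$. The homogeneous estimate
$$
\|u^{L}\|_X \lesssim \|u_0\|_{W^{(m+3)/(m+2)+\delta,1}(\R^2)} + \|u_1\|_{W^{(m+1)/(m+2)+\delta,1}(\R^2)}
$$
controls the free evolution, the Sobolev orders $(m+3)/(m+2)+\delta$ and $(m+1)/(m+2)+\delta$ being dictated by the dispersive decay rate of the Tricomi fundamental solution. The inhomogeneous estimate takes the dual form
$$
\|\mathcal{N}(u)\|_X \lesssim \bigl\|(1+|\phi_m^2(t)-|x|^2|)^{-\gamma p}\, t^{-\alpha p/(p+1)}\cdot t^\alpha|u|^p\bigr\|_{L^{(p+1)/p}([1,\infty)\times\R^2)},
$$
and a H\"older step with the weight identity $(1+|\phi_m^2-|x|^2|)^{-\gamma p}\cdot(1+|\phi_m^2-|x|^2|)^{\gamma p}=1$ collapses the right-hand side to $\|u\|_X^p$. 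The contraction property then follows from the pointwise bound $\bigl||u|^p-|v|^p\bigr|\le p(|u|^{p-1}+|v|^{p-1})|u-v|$ combined with a bilinear H\"older splitting in $X$, so that small Cauchy data yield a fixed point.

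The parameter window \eqref{equ:1.4} is forced by these estimates: the lower bound $\gamma > 1/(p(p+1))$ ensures integrability of the weight against $|u|^p$ in the H\"older step, while the upper bound $\gamma < ((m+1)p-(2\alpha+3))/((m+2)(p+1))$ is the sharp range for which the Fourier integral operator producing $\mathcal{N}$ is bounded from $L^{(p+1)/p}$ to $L^{p+1}$ with the prescribed weight. Nonemptiness of the window uses $p > p_{crit}(2,m,\alpha)$, and room for $\delta>0$ in the Sobolev order uses $p < p_{conf}(2,m,\alpha) = (m+2\alpha+5)/(m+1)$; the hypothesis $-1<\alpha\le m$ is what makes the time weights $t^{\alpha/(p+1)}$ and $t^\alpha$ compatible with the self-similar variable $\phi_m(t)=\tfrac{2}{m+2}t^{(m+2)/2}$ when one tracks the powers through the rescaling that converts the Tricomi dispersion into a wave-like one.

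The main obstacle is the derivation of the weighted Strichartz estimates themselves. Unlike the constant-coefficient wave equation, the symbol $\tau^2 - t^m|\xi|^2$ degenerates at $t=0$ and the Hamiltonian flow generates a cone of speed $t^{m/2}$, so one must represent the solution as a Fourier integral operator whose phase is built from the confluent hypergeometric / Airy-type special functions naturally attached to $\p_t^2 - t^m\Delta$. The weight $(1+|\phi_m^2(t)-|x|^2|)^\gamma$ degenerates exactly on the characteristic surface where the stationary phase contribution concentrates, so obtaining it requires a dyadic decomposition in $\phi_m^2(t)-|x|^2$, a microlocal splitting into the interior region $\{|x|\le \phi_m(t)/2\}$ and the exterior region near the cone, and an interpolation between $L^1\to L^\infty$ dispersive bounds and $L^2\to L^2$ energy bounds on each dyadic piece. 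The sharp range \eqref{equ:1.4} is expected to emerge only after all these pieces are combined, and this is the step in which the bulk of the technical work lies.
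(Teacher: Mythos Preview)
Your overall strategy matches the paper's: split into the homogeneous piece $u^L$ and the Duhamel piece, control each by a weighted Strichartz estimate for $\p_t^2-t^m\Delta$, and run a Picard iteration in the weighted $L^{p+1}$ space. The paper does exactly this (with the iteration written as $u_k\to u_{k+1}$ rather than as a contraction map), and the hard work is indeed the derivation of the two Strichartz estimates.

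There is, however, a sign error in your inhomogeneous estimate that makes the closing step as written impossible. The correct form, which is what the paper proves (Theorem~5.1 with $q=p+1$, $\gamma_1=\gamma$, $\gamma_2=p\gamma$), carries a \emph{positive} weight on the source:
\[
\bigl\|\,W^{\gamma}\,t^{\alpha/(p+1)}\,\mathcal N(u)\bigr\|_{L^{p+1}}
\;\lesssim\;
\bigl\|\,W^{\,p\gamma}\,t^{-\alpha/(p+1)}\cdot t^{\alpha}|u|^{p}\bigr\|_{L^{(p+1)/p}},
\qquad W=\bigl((\phi_m(t)+2)^2-|x|^2\bigr),
\]
and then $t^{-\alpha/(p+1)}t^{\alpha}=t^{\alpha p/(p+1)}$ together with $W^{p\gamma}|u|^p=\bigl(W^{\gamma}|u|\bigr)^p$ collapses the right-hand side to $\|u\|_X^p$ by an \emph{equality}, no H\"older needed. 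With your exponent $-p\gamma$ the right-hand side would be strictly smaller, and no such bound can hold; correspondingly, the lower bound $\gamma>1/(p(p+1))$ is not an ``integrability in the H\"older step'' but precisely the hypothesis $\gamma_2=p\gamma>1/q=1/(p+1)$ required by the inhomogeneous Strichartz estimate itself.

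One further point worth flagging: in the paper the Strichartz estimates are actually proved with the weight $W=((\phi_m(t)+2)^2-|x|^2)$, not $(1+|\phi_m^2(t)-|x|^2|)$. The latter can vanish on the cone $|x|=\phi_m(t)$ inside the support, which complicates the analysis; the shifted weight $W$ stays bounded below by $\phi_m(t)+|x|$ there. The passage to the weight in \eqref{equ:1.3} is done only at the very end via the elementary comparison $(1+|\phi_m^2(t)-|x|^2|)\lesssim W$ on the support cone $|x|\le\phi_m(t)+1$.
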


\begin{remark}\label{W-Y}
{\it By $p_{crit}(2,m,\alpha)<p<p_{conf}(2,m,\alpha)$ with \(m\in(0,\infty)\) and \(-1<\alpha\leq m\),
we can conclude $p_{crit}(2,m,\alpha)=p_2(n,m,\alpha)$, $\f{1}{p(p+1)}<\f{(m+1)p-(2\alpha+3)}{(m+2)(p+1)}$ and \(\frac{m+1}{m+2}-\gamma-\frac{1}{p+1}>0\).
Then \eqref{equ:1.4} and the existence of $\delta>0$ hold true in Theorem \ref{thm:global existence-L}.}
\end{remark}

\begin{theorem}\label{thm:global existence-LL}
For $0<m\leq \sqrt{2}-1$, $m+2\leq p<\f{3m+7}{m+3}$, $\alpha=m-p+1\le -1$,
there exists  a small constant $\varepsilon_0>0$
such that when the initial data $(u_0, u_1)$ satisfies
\begin{equation}\label{prop:1}
  \sum_{|\beta| \leq 1}(\|Z^{\beta} u_0\|_{\dot{H}^{s}\left(\mathbb{R}^{2}\right)}+\|Z^{\beta} u_1\|_{\dot{H}^{s-\f{2}{m+2}}\left(\mathbb{R}^{2}\right)})\leq\varepsilon_0,
\end{equation}
where $s=1-\f{2(m-p+1)+4}{(m+2)(p-1)}$ and $Z=\left\{\partial_{1}, \partial_{2}, x_{1} \partial_{2}-x_{2} \partial_{1}\right\}$,
\eqref{YH-4} admits a global solution $u$ which satisfies that for $|\beta| \leq 1$,
$$
t^{\f{m-p+1}{p+1}}Z^{\beta} u \in L_{t}^{q} L_{r}^{\nu} L_{\theta}^{2}\left([1,+\infty)\times [0, +\infty)\times [0, 2\pi]\right)
\cap L_{t}^{\infty} \dot{H}^{s}\left([1,+\infty)\times\mathbb{R}^2\right),
$$
where $(r,\theta)\in [0, +\infty)\times [0, 2\pi]$ stands for the polar coordinate of $x\in\Bbb R^2$,
the positive constants $q$ and $\nu$ satisfy
\begin{equation}\label{equ:ugly}
\left\{ \enspace
\begin{aligned}
&2\leq q \leq 2p,\quad 2\leq \nu\leq 2p,\\
&\f{1}{\nu}+\f{m-p+1}{(m+2)(p+1)}>0,\\
&\f{1}{q}+\f{m+2}{\nu}=\f{m-p+3}{p-1}-\f{m-p+1}{p+1},\\
&\f{1}{q}+\f{m+2}{2\nu}\leq\f{m+1}{2}-\f{m-p+1}{p+1},\\
&\frac{1}{q}+\frac{m+2}{2 \nu} \geq \frac{3}{2 p}+\f{m-p+1}{(p+1)p}.
\end{aligned}
\right.
\end{equation}
\end{theorem}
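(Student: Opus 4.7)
\medskip
\noindent\textbf{Proof proposal.}
The natural strategy is a Banach fixed-point argument in the complete metric space
\[
X := \Bigl\{ u : \sum_{|\beta|\le 1}\bigl\|t^{\frac{m-p+1}{p+1}} Z^\beta u\bigr\|_{L^q_t L^\nu_r L^2_\theta([1,\infty)\times[0,\infty)\times[0,2\pi])} + \bigl\|Z^\beta u\bigr\|_{L^\infty_t \dot H^s} \le M\Bigr\},
\]
equipped with an analogous difference norm, and applied to the map $u\mapsto u_L + \mathcal N(u)$, where $u_L$ is the solution of the linear homogeneous Tricomi problem $\partial_t^2 v - t^m \Delta v = 0$ with data $(u_0,u_1)$ at $t=1$, and $\mathcal N(u)$ is the Duhamel term with source $t^\alpha |u|^p$. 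Closing the iteration reduces to two ingredients: (i) a weighted mixed-norm Strichartz estimate for $\partial_t^2 - t^m\Delta$ in the $L^q_t L^\nu_r L^2_\theta$ framework together with the $L^\infty_t\dot H^s$ energy bound; (ii) a nonlinear estimate of $t^\alpha|u|^p$ in the dual source space in terms of $\|u\|_X^p$.

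For the linear piece I would first exploit that $Z\in\{\partial_1,\partial_2,\Omega\}$ commutes exactly with $\partial_t^2 - t^m\Delta$ (since the operator is translation- and rotation-invariant in $x$), so each $Z^\beta u$ solves a Tricomi equation with source $t^\alpha Z^\beta(|u|^p)$ and data $Z^\beta(u_0,u_1)$. The desired estimate to assume/invoke from the paper's earlier Strichartz machinery is, schematically,
\[
\bigl\|t^{\frac{m-p+1}{p+1}} v\bigr\|_{L^q_t L^\nu_r L^2_\theta} \lesssim \|v(1,\cdot)\|_{\dot H^s} + \|\partial_t v(1,\cdot)\|_{\dot H^{s-\frac{2}{m+2}}} + \bigl\|t^{-\frac{m-p+1}{p+1}+\alpha}\,\tilde F\bigr\|_{\text{dual}},
\]
where the admissibility of $(q,\nu,s)$ is encoded precisely by the five conditions in \eqref{equ:ugly}: the first line fixes the allowed range, the second keeps the radial weight integrable, the third is the scaling identity dictated by $\partial_t^2 - t^m\Delta$, and the last two are the Knapp-type and sharp-admissibility conditions. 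The rotation-$L^2_\theta$ norm is the right angular norm because it meshes with the spherical harmonic decomposition of the Fourier integral operator that yields the explicit parametrix for the Tricomi flow.

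For the nonlinear piece I would use H\"older in $t$, $r$, and $\theta$, combined with the Sobolev embedding $\dot H^{1/2}_\theta(S^1)\hookrightarrow L^\infty_\theta$ applied after one rotation derivative (the $|\beta|\le 1$ count is exactly what allows this angular gain for $|u|^p$ from the $L^2_\theta$ norm of $u$ and $\Omega u$). The scaling identity that must hold, after distributing one $Z$ onto $|u|^p$, is
\[
\alpha - \tfrac{m-p+1}{p+1} = p\cdot\tfrac{m-p+1}{p+1} - \tfrac{m-p+1}{p+1},
\]
which is automatic from $\alpha = m-p+1$, and the matching of the $L^{q'}_tL^{\nu'}_r$ exponents follows from the third line of \eqref{equ:ugly}. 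The remaining inequalities in \eqref{equ:ugly} are precisely what is needed so that the H\"older/Sobolev chain does not require derivatives one cannot afford from the $\dot H^s$-energy bound; verifying that the range $0<m\le \sqrt 2 - 1$, $m+2\le p<\frac{3m+7}{m+3}$ is consistent with \eqref{equ:ugly} admitting some $(q,\nu)$ is a direct, if tedious, check.

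The main obstacle, and the place where real work is needed rather than bookkeeping, is establishing the weighted $L^q_tL^\nu_rL^2_\theta$ Strichartz estimate in step~(i). The Tricomi symbol $\tau^2 - t^m|\xi|^2$ is degenerate at $t=0$ and has non-standard parabolic-type scaling $x\sim \phi_m(t)=\frac{2}{m+2}t^{(m+2)/2}$, so one cannot quote a standard wave-equation estimate; the paper's parametrix expresses the fundamental solution through Fourier integral operators whose phases involve $\phi_m(t)\pm \phi_m(s)$ and whose amplitudes decay like fractional powers of $t$. Obtaining the sharp $L^\nu_r L^2_\theta$ bound from this parametrix requires a radial-angular dyadic decomposition and a careful stationary-phase analysis that captures the gain in angular concentration inherent in the $L^2_\theta$ norm; this is exactly the gain that fails in the ordinary $L^q_tL^p_x$ setting for the regime $\alpha\le -1$, and it is what forces the split between Theorem~\ref{thm:global existence-L} and Theorem~\ref{thm:global existence-LL}.
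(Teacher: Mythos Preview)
Your proposal is essentially the paper's own argument: Picard iteration in the weighted $L^q_tL^\nu_rL^2_\theta\cap L^\infty_t\dot H^s$ space, with the linear input coming from Lemma~\ref{lem1} (homogeneous) and Lemma~\ref{lem2} (inhomogeneous, via the choice $q=p\tilde q'$, $\nu=p\tilde\nu'$), and the nonlinear closure via the one-dimensional angular Sobolev gain $\|f\|_{L^\infty_\theta}\lesssim\sum_{|\beta|\le 1}\|Z^\beta f\|_{L^2_\theta}$ coming from $\partial_\theta\in Z$. Two small corrections: the embedding you want is $H^1(S^1)\hookrightarrow L^\infty(S^1)$ (the $\dot H^{1/2}$ endpoint fails), and the nonlinear estimate closes purely by H\"older in the $L^q_tL^\nu_rL^2_\theta$ scale---the $L^\infty_t\dot H^s$ piece is carried along passively and is not used to bound $|u|^p$.
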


\begin{remark}\label{JY-8-3}
{\it When \(2\sqrt{2}-1\leq\mu<2\) and \(\frac{2}{\mu-1}\leq p<p_{\conf}(2,\mu)\),
for the resulting equation of \eqref{YH-4} from \eqref{00-1-2}, the scope of $\alpha$
is \(-\frac{4}{3}<\alpha\leq-1\). If follows from direct computation that
\(p_{\crit}(2,m,\alpha)=p_1(2,m,\alpha)=1+\frac{2+\alpha}{m+1}\) holds.
This means that the crucial Strichartz estimate in Lemma \ref{th2-1} with $\alpha>-1$ for the homogeneous Tricomi equation
can not been utilized. To overcome this difficulty, we will establish another kind of weighted mixed-norm Strichartz estimate
in Lemma \ref{lem1} of Section \ref{E-8} to prove Theorem \ref{thm:global existence-LL}.}
\end{remark}

\begin{remark}\label{W-Z}
{\it Although the positive constants $q$ and $\nu$ fulfill several restriction conditions in  \eqref{equ:ugly},
it is no problem for the existence of $q$ and $\nu$ when $q$ and $\nu$ have different relations.
One can see the details in Section \ref{E-5}.}
\end{remark}

\subsection{Derivation of conformal powers $p_{\conf}(2,m,\alpha)$ and $p_{\conf}(2,\mu)$}\label{E-4}

We now derive the conformal invariant exponents $p_{\conf}(2,m,\alpha)$
and $p_{\conf}(2,\mu)$ in Theorem \ref{thm:global existence-L} and  Theorems \ref{TH-1}-\ref{TH-2}, respectively.
Note that the Lagrangian functional for \eqref{YH-4} is
$L(u, u')=\frac{1}{2}|\partial_tu|^2-\frac{1}{2}t^m|\nabla_x u|^2-\frac{t^\alpha}{p+1}|u|^{p+1}$ with $u'=
(\p_0u, \p_1u, \p_2u)$.
Then one has that
$\frac{d}{d\varepsilon}\int_{\Bbb R^3} L(u+\varepsilon \psi, (u+\varepsilon \psi)')dtdx\big|_{\varepsilon=0}
=0$  for any \(\psi\in C_0^\infty(\R^{3})\).
This means that \(u\) should satisfy the corresponding Euler-Lagrangian equation
\begin{equation}\label{equ:EL}
	\partial_{u} L(u, u')-\sum_{j=0}^2\partial_j\left(\partial_{u_j} L(u, u')\right)=0,
\end{equation}
where $u_j=\p_ju$ for $j=0,1,2$.
By direct computation, it is easy to know that the solution of \eqref{YH-4} is invariant under the scaling transformation
$u(t,x)\longrightarrow \lambda^{\frac{\alpha+2}{p-1}}u(\lambda t, \lambda^{\frac{m+2}{2}}x)$.
Set \(u_\lambda=\lambda^{\frac{\alpha+2}{p-1}}u(\lambda t, \lambda^{\frac{m+2}{2}}x)\),
then one has
\begin{equation}\label{equ:pL}
	\partial_\lambda L(u_\lambda, u_\lambda')=\p_{u}L(u_\lambda, u'_\lambda)
	\partial_\lambda u_\lambda +\sum_{j=0}^2\p_{u_j}L(u_\lambda, u'_\lambda)
    \partial_\lambda\partial_j u_\lambda.
\end{equation}
Substituting \eqref{equ:EL} into \eqref{equ:pL} derives
$\partial_\lambda L(u_\lambda, u_\lambda')=\sum_{j=0}^2\partial_j\left(\partial_{u_j}L(u_\lambda, u_\lambda')\partial_\lambda u_\lambda\right)$.
Due to
\begin{equation*}
\partial_\lambda u_\lambda|_{\lambda=1}=\frac{\alpha+2}{p-1}u+t\partial_tu+\frac{m+2}{2} \sum_{j=1}^2x_j\partial_ju,
\end{equation*}
then
\begin{equation}\label{equ:pL1}
\partial_\lambda L(u_\lambda, u_\lambda')|_{\lambda=1}=\sum_{j=0}^2\partial_j\big(\partial_{u_j} L(u, u')\big(\frac{\alpha+2}{p-1}u+t\partial_tu+\frac{m+2}{2} \sum_{j=1}^2x_j\partial_ju\big)\big).
\end{equation}
On the other hand, it follows from direct computation that
\begin{equation*}
L(u_\lambda, u_\lambda')=\lambda^{\frac{2(\alpha+2)}{p-1}+2}(L(u,u'))(\lambda t,\lambda^{\frac{m+2}{2}}x)
\end{equation*}
and
\begin{equation}\label{equ:pL2}
\partial_\lambda L(u_\lambda, u_\lambda')|_{\lambda=1}=
\big(\frac{2(\alpha+2)}{p-1}+2\big)L(u,u')+t\partial_tL(u,u')+\frac{m+2}{2}\sum_{j=1}^2x_j\partial_jL(u,u').
\end{equation}
Combining  \eqref{equ:pL1} with \eqref{equ:pL2} yields
\begin{equation}\label{equ:div}
\begin{split}
&\partial_t Q_0
+\sum_{j=1}^2\p_j Q_j=\big(\frac{2(\alpha+2)}{p-1}-m-1\big)L(u,u'),
\end{split}
\end{equation}
where
\begin{equation*}\label{equ:div}
\begin{split}
&Q_0=\partial_{u_0} L(u, u')\big(\frac{\alpha+2}{p-1}u+t\partial_tu
+\frac{m+2}{2} \sum_{j=1}^2x_j\partial_ju\big) -tL(u,u'),\\
&Q_j=\partial_{u_j} L(u, u')\big(\frac{\alpha+2}{p-1}u
+t\partial_tu+\frac{m+2}{2} \sum_{j=1}^2x_j\partial_ju\big) -\frac{m+2}{2}x_jL(u,u'),\quad j=1,2.
\end{split}
\end{equation*}
Since \(\text{supp}u(t,x)\) is compact with respect to the variable $x$, then integrating \eqref{equ:div} over \(\R^2\) yields
\begin{equation}\label{CC-2}
\frac{d}{dt} \int_{\Bbb R^2} Q_0dx=\big(\frac{2(\alpha+2)}{p-1}-m-1\big)\int_{\Bbb R^2} L(u,u')dx.
\end{equation}
When $\frac{2(\alpha+2)}{p-1}-m-1=0$,  $Q_0$ is conserved by \eqref{CC-2}.
From this, the conformal invariant power for problem \eqref{YH-4} can be determined as
\begin{equation}\label{equ:conf-1}
p_{\conf}(2,m,\alpha)=\frac{m+2\alpha+5}{m+1}.
\end{equation}
It should be pointed out that $p_{\conf}(2,0,0)=5$
and $p_{\conf}(2,m,0)=\frac{m+5}{m+1}$ in \eqref{equ:conf-1} just correspond to
the conformal powers of 2-D semilinear equations $\square u=|u|^p$ and $\p_t^2u-t^m\Delta u=|u|^p$
respectively (see \cite{Gls1}, \cite{HWY1}, \cite{HWY4} and \cite{Rua4}).

For $0<\mu<1$ in \eqref{00-1-2}, corresponding to the equation \eqref{YH-4}, we have
$m=\f{2\mu}{1-\mu}$ and $\alpha=\f{2\mu}{1-\mu}$. Then it is derived from \eqref{equ:conf-1} that
\begin{equation}\label{equ:conf-2}
p_{conf}(2,\mu)=\f{\mu+5}{\mu+1}.
\end{equation}
For $1<\mu<2$ in \eqref{00-1-2}, compared with the equation \eqref{YH-4}, there hold $m=\f{2(2-\mu)}{\mu-1}$ and $\alpha=\f{2(2-\mu)}{\mu-1}
+1-p$. In terms of \eqref{equ:conf-1}, one has
$$p_{conf}(2,\mu)=\f{6(2-\mu)+(7-2p_{conf}(2,\mu))(\mu-1)}{3-\mu},$$
which yields that for $1<\mu<2$,
\begin{equation}\label{equ:conf-3}
p_{conf}(2,\mu)=\f{\mu+5}{\mu+1}.
\end{equation}

\subsection{Sketch of proof on Theorem~\ref{thm:global existence-L}}

To prove Theorem~\ref{thm:global existence-L}, motivated by the weighted Strichartz estimates
derived in \cite{Gls1, LS} for the $n-$D ($n\ge 3$) linear wave equation $\square\tilde w_0=\tilde F_0$ with $(\t w_0(0,x), \p_t\t w_0(0,x))
=(u_0(x), u_1(x))$
and in \cite{HWY4} for the $n-$D ($n\ge 3$) linear generalized Tricomi equation $\p_t^2\t w_1-t^m\Delta \t w_1=\t F_1(t,x)$
with $(\t w_1(0,x), \p_t\t w_1(0,x))=(u_0(x), u_1(x))$ ($m\in\Bbb N$), where
the weights $1+|t^2-|x|^2|$  and $(2+\phi_m(t))^2-|x|^2$
are utilized, respectively, we will establish some new classes of
Strichartz estimates  for the following 2-D problem
\begin{equation}\label{a-1-M}
\begin{cases}\partial_t^2 \t w-t^{m} \Delta \t w=F(t,x),\\
\t w(1, x)=u_0(x),\quad \p_t \t w(1, x)=u_1(x),
\end{cases}
\end{equation}
where $m>0$, $u_0, u_1 \in C_{0}^{\infty}(\mathbb{R}^{2})$, $\operatorname{supp}(u_0, u_1) \subseteq
B(0, 1)$, and $F(t,x)\equiv0$ when $|x|>\phi_m(t)+1$.
It is emphasized that the property of $n\ge 3$ plays crucial roles in some places of  \cite{HWY4}
(see Line 8 of Page 54 and the argument of (4-42) in \cite{HWY4}). Hence,
we have to improve some techniques in \cite{HWY4} to study the 2-D problem \eqref{a-1-M}
(for examples, see the treatments in \eqref{a-72-1} and \eqref{a-73-1} of Appendix).

Let $\t w=v+w$, where $v$ solves the 2-D homogeneous problem
\begin{equation}\label{a-1}
\begin{cases}\partial_t^2 v-t^{m} \Delta v=0, \\
v(1, x)=u_0(x),\quad \p_t v(1, x)=u_1(x),
\end{cases}
\end{equation}
and $w$ is a solution of the following 2-D inhomogeneous problem
\begin{equation}\label{Y-3}
\begin{cases}
&\partial_t^2 w-t^m\triangle w=F(t,x), \\
&w(1,x)=0,\quad \partial_tw(1,x)=0.
\end{cases}
\end{equation}

For $v$, based on the pointwise estimate of $v$ in \cite[formula 2-20]{HWY4},
we can obtain such a new spacetime-weighted Strichartz estimate
\begin{equation}\label{a-2-M}
\begin{aligned}
&\|\left(\left(\phi_{m}(t)+2\right)^{2}-|x|^{2}\right)^{\gamma} t^{\frac{\alpha}{q}} v\|_{L^{q}\left(\left[1, +\infty\right) \times \mathbb{R}^{2}\right)} \\
& \leq C_{m,q,\gamma,\delta}\big(\|u_0\|_{W^{\frac{m+3}{m+2}+\delta, 1}\left(\mathbb{R}^{2}\right)}+\|u_1\|_{W^{\frac{m+1}{m+2}+\delta, 1}\left(\mathbb{R}^{2}\right)}\big),
\end{aligned}
\end{equation}
where $\alpha>-1$, $p_{\text {crit }}(2,m, \alpha)+1<q<p_{\text {conf}}(2,m,\alpha)+1$,
the positive constants $\gamma$ and $\delta$ fulfill
\begin{equation}\label{con3-M}
0<\gamma<\frac{m+1}{m+2}-\frac{m+4+2\al}{(m+2) q}, \quad 0<\delta<\frac{m+1}{m+2}-\gamma-\frac{1}{q}.
\end{equation}

For $w$, our main concern is to establish  a precise Strichartz inequality as follows
\begin{equation}
\begin{split}
\|(\big(\phi_m(t)+2\big)^2-|x|^2)^{\gamma_1}&t^{\frac{\alpha}{q}}w\|_{L^q([1, \infty)\times \mathbb{R}^{2})}
\leq C
\|(\big(\phi_m(t)+2\big)^2-|x|^2)^{\gamma_2}t^{-\frac{\alpha}{q}}F\|_{L^{\frac{q}{q-1}}([1, \infty)\times \mathbb{R}^{2})},
\end{split}
\label{equ:3.5-L}
\end{equation}
where $-1<\alpha\leq m$, $2\leq q\leq\f{2m+6+2\alpha}{m+1}$, $0<\gamma_1<\frac{m+1}{m+2}-\frac{m+4+2\alpha}{(m+2)q}$,
$\gamma_2>\frac{1}{q}$ and $C>0$ is a constant  depending on $m$, $q$, $\g_1$, $\gamma_2$ and \(\al\).
By Stein's analytic interpolation theorem (see \cite{S}), in order to prove \eqref{equ:3.5-L},
it suffices to derive the following two endpoint cases of \(q=q_0\) with $q_0=\f{2m+6+2\alpha}{m+1}$
(see Remark \ref{JY-8-5} of Section \ref{E-6} for the derivation of $q_0$) and \(q=2\):
\begin{equation}
\begin{split}
\big\|\big(\phi_m(t)^2&-|x|^2\big)^{\gamma_1}t^{\frac{\alpha}{q_0}}w\big\|_{L^{q_0}(\ra)}\\
&\leq C
\big\|\big(\phi_m(t)^2-|x|^2\big)^{\gamma_2}t^{-\frac{\alpha}{q_0}}F\big\|_{L^{\frac{q_0}{q_0-1}}(\ra)}\quad
\text{for $\gamma_1<\frac{1}{q_0}<\gamma_2$}
\end{split}
\label{equ:3.3-L}
\end{equation}
and
\begin{equation}
\begin{split}
\big\|\big(\phi_m(t)^2&-|x|^2\big)^{\gamma_1}t^{\frac{\alpha}{2}}w\big\|_{L^2(\ra)}\\
&\leq C
\big\|\big(\phi_m(t)^2-|x|^2\big)^{\gamma_2}t^{-\frac{\alpha}{2}}F\big\|_{L^2(\ra)}\quad \text{for $\gamma_1<-\f{1}{2}+\f{m-\al}{m+2}$ and  $\gamma_2>\f{1}{2}$}.
\end{split}
\label{equ:3.4.1-L}
\end{equation}

To show \eqref{equ:3.3-L}, as in  \cite{Gls1} and \cite{HWY4}, we shall split the integral
domain $\{(t,x): \phi^2_m(t)-|x|^2\geq 1\}$
in the related Fourier integral operators into the ``relatively small time" part and the ``relatively large time"
part. The ``relatively small time"  and ``relatively large time" mean $T\le t\le 2T$ with $1\le T\le 2\cdot 10^{\f{2}{m+2}}$
and $T\ge 2\cdot 10^{\f{2}{m+2}}$, respectively.

For the case of ``relatively small time", it is enough to establish the following Strichartz
estimate with time-weight:
\begin{equation}\label{equ:un-chara-L}
	\big\|t^{\frac{\alpha}{q_0}}w\big\|_{L^{q_0}(\ra)}\leq C
\big\|t^{-\frac{\alpha}{q_0}}F\big\|_{L^{\frac{q_0}{q_0-1}}(\ra)}.
\end{equation}
This will be given in terms of the stationary phase method together the dynamic decomposition.

For the case of ``relatively large time", the analysis is more involved. We will divide the related integral domain
into two parts according to the scale of \(\phi_m(t)-|x|\):

{\bf Case A.}  \(|\phi_m(t)-|x||\) is very small or very large;

{\bf Case B.} \(|\phi_m(t)-|x||\) lies in the medium scale.

With respect to Case A, \eqref{equ:3.3-L} comes from the delicate analysis by use of the support conditions of \(w\) and \(F\).
While for Case B, motivated by  \cite{Gls1} and \cite{HWY4},
it is required to introduce two kinds of Fourier integral operators
$(T_zg)(t,x)$ and $(\tilde{T}_zg)(t,x)$ (see their expressions in \eqref{a-67}
and \eqref{a-79} below). Based on the careful treatments on  $(T_zg)(t,x)$ and $(\tilde{T}_zg)(t,x)$
in terms of the different range of \(\alpha\),
and together with the complex interpolation methods,  \eqref{equ:3.3-L} can be obtained.

For the $L^2$ estimate \eqref{equ:3.4.1-L}, similarly to the proof of \eqref{equ:3.3-L},
the integral domain $\{(t,x): \phi^2_m(t)-|x|^2\geq 1\}$ is still decomposed into
the ``relatively small time" part and the ``relatively large time" part.
In the ``relatively large time" part, we will utilize the
more precise weight $(\phi_{m}(t)+|x|)^{\frac{1}{2}}(\phi_{m}(t)-|x|)^{\frac{1}{2}-\frac{1}{m+2}-\epsilon}$
($\epsilon>0$ is any small fixed constant)
instead of the weight $(\phi_{m}^2(t)-|x|^2)^{\frac{1}{2}}$ in Section 5B1 of \cite{HWY4}
to obtain the better time-decay rate of $w$
(see \eqref{SZH}-\eqref{a-94} in Subsection \ref{Sub-4.2} below).
This treatment leads to the establishment of the required spacetime-weighted Strichartz inequality
for 2-D problem \eqref{a-1-M}, which is different from that in \cite{HWY4} only for the cases of space dimensions $n\ge 3$
and $m\ge 1$.

When \eqref{a-2-M} and \eqref{equ:3.5-L} are derived, by the  standard Picard iteration and
the contractible mapping principle,
we can complete the proof of Theorem~\ref{thm:global existence-L}.

\subsection{Sketch of proof on Theorem~\ref{thm:global existence-LL}}

To prove Theorem~\ref{thm:global existence-LL},
when $0<m\leq \sqrt{2}-1$, motivated by \cite{SSW} and \cite{HWY2}, we will establish some new kinds of angular mixed-norm
Strichartz inequality
for the 2-D problem
\begin{equation}\label{q-3-L}
\begin{cases}\partial_t^2\t w-t^{m} \Delta\t w=F(t, x), \\
\t w(1, x)=u_0(x),\quad \p_t \t w(1, x)=u_1(x).
\end{cases}
\end{equation}
Under the polar coordinate $(x_1,x_2)=(r cos\theta, r sin\theta)$ with
$(r,\theta)\in [0,\infty)\times [0,2\pi]$, we can conclude
\begin{equation}\label{q-11-L}
\begin{aligned}
&\|t^{\f{m-p+1}{p+1}}\t w\|_{L_{t}^{q} L_{r}^{\nu} L_{\theta}^{2}([1,+\infty) \times [0, \infty)\times [0, 2\pi])}\\
&\leq C(\|u_0\|_{\dot{H}^{s}\left(\mathbb{R}^{2}\right)}+\|u_1\|_{\dot{H}^{s-\f{2}{m+2}}\left(\mathbb{R}^{2}\right)}
+\|t^{\f{p-m-1}{p+1}}F\|_{L_{t}^{\tilde{q}^{\prime}} L_{r}^{\tilde{\nu}^{\prime}} L_{\theta}^{2}([1,\infty)\times[0, \infty)\times [0, 2\pi])}),
\end{aligned}
\end{equation}
where $m+2\leq p<\f{3m+7}{m+3}$,  $s=1-\f{2}{\nu}-\f{2}{m+2}(\f{1}{q}+\f{m-p+1}{p+1})$ with
$(q, \nu) \neq(\infty, \infty)$ satisfying
\begin{equation}\label{q-10-L}
\left\{\enspace
\begin{aligned}
&q, \nu \geq 2, \quad \f{1}{\nu}+\f{m-p+1}{(m+2)(p+1)}>0,\\
&\frac{1}{q} \leq \f{m+1}{2}-\frac{m+2}{2\nu}-\f{m-p+1}{p+1},\\
\end{aligned}
\right.
\end{equation}
meanwhile $(\tilde{q}^{\prime}, \tilde{\nu}^{\prime})$ fulfills $\f{1}{\tilde{q}^{\prime}}+\f{1}{\t q}=1$
and $\f{1}{\tilde{\nu}^{\prime}}+\f{1}{\t\nu}=1$ with $(\tilde{q}, \tilde{\nu})$ satisfying
\begin{equation}\label{q-11-LL}
\left\{\enspace
\begin{aligned}
&\tilde{q}, \tilde{\nu} \geq 2, \quad \frac{1}{\tilde{q}} \leq \f{m+1}{2}-\frac{m+2}{2\tilde{\nu}}-\f{m-p+1}{p+1},\\
&\frac{1}{q}+\frac{1}{\t q}+(m+2)(\f{1}{\nu}+\f{1}{\t\nu})+\f{2(m-p+1)}{p+1}=m+1.\\
\end{aligned}
\right.
\end{equation}
To prove \eqref{q-11-L}, a series of inequalities are
derived by utilizing some interpolation methods between different Besov spaces and
applying an explicit formula for the solution $\t w$ of \eqref{q-3-L}.

Based on the crucial estimate \eqref{q-11-L} with the restriction conditions \eqref{q-10-L}-\eqref{q-11-LL},
through suitable choices of $(q, \nu)$ and $(\tilde{q}^{\prime}, \tilde{\nu}^{\prime})$,
Theorem~\ref{thm:global existence-LL} will be shown by the standard Picard iteration, H\"older inequality
and the contractible mapping principle.

When Theorem~\ref{thm:global existence-L} and Theorem~\ref{thm:global existence-LL}
are proved, by returning to the corresponding equation $\p_t^2u-\Delta u+\f{\mu}{t}\p_tu=|u|^p$ in \eqref{00-1},
Theorem \ref{TH-1} and Theorem \ref{TH-2} may be obtained correspondingly.

The paper is organized as follows.
In Section \ref{E-7}, some preliminaries such as the explicit expressions of solutions to 2-D homogenous
or inhomogenous Tricomi equations and some useful lemmas are given.
In Section \ref{E-8}, some weighted Strichartz estimates for homogenous Tricomi equations are established.
On the other hand, for inhomogenous Tricomi equations, we will show the time-weighted Strichartz estimates,
spacetime-weighted Strichartz estimates and spacetime-weighted angular mixed-norm Strichartz estimates
in Sections \ref{E-6}-\ref{E-5}, respectively. In Section \ref{E-1},
Theorem~\ref{thm:global existence-L} and  Theorem~\ref{TH-1} are proved.
We will complete the proofs of Theorem~\ref{thm:global existence-LL} and  Theorem~\ref{TH-2} in Section \ref{E-2}.
In addition, some useful estimates are given in Appendix.

\section{Preliminaries}\label{E-7}

In this section, for reader's convenience, in terms of \cite{Yag2} and \cite{Rua3}-\cite{Rua4},
we give the explicit expressions of solutions to 2-D homogenous
or inhomogenous generalized Tricomi equations. In addition, some useful lemmas are listed.

\subsection{Explicit solutions of homogenous generalized Tricomi equations}\label{Sub-1}

Let $v$ solve the 2-D problem
\begin{equation}\label{a-1}
\begin{cases}
&\partial_t^2 v-t^{m} \Delta v=0, \\
&v(1, x)=u_0(x),\quad \p_t v(1, x)=u_1(x),
\end{cases}
\end{equation}
where $m>0$, $t\ge 1$, $x=(x_1, x_2)$, and $(u_0, u_1)$ is given in \eqref{00-1}.
It follows from \cite{Yag2} or \cite{Rua3}-\cite{Rua4} that
\begin{equation}\label{SW-1}
\begin{split}
v(t,x)=V_1(t, D_x)u_0(x)+V_2(t, D_x)u_1(x),
\end{split}
\end{equation}
where the symbols $V_j(t, \xi)$ ($j=1,2$) of the Fourier integral operators $V_j(t, D_x)$  are
\begin{equation}\label{equ:2.3}
\begin{split}
V_1(t,|\xi|)=&\frac{\Gamma(\frac{m}{m+2})}{\Gamma(\frac{m}{2(m+2)})}\bigl[e^{\frac{z}{2}}H_+\big(\frac{m}{2(m+2)},\frac{m}{m+2};z\big) +e^{-\frac{z}{2}}H_-\big(\frac{m}{2(m+2)},\frac{m}{m+2};z\big)\bigr]
\end{split}
\end{equation}
and
\begin{equation}
\begin{split}
V_2(t,|\xi|)=&\frac{\Gamma(\frac{m+4}{m+2})}{\Gamma(\frac{m+4}{2(m+2)})}t\bigl[
e^{\frac{z}{2}}H_+\big(\frac{m+4}{2(m+2)},\frac{m+4}{m+2};z\big)
+e^{-\frac{z}{2}}H_-\big(\frac{m+4}{2(m+2)},\frac{m+4}{m+2};z\big)\bigr],
\end{split}
\label{equ:2.4}
\end{equation}
here $z=2i\phi_m(t)|\xi|$ with $\xi=(\xi_1,\xi_2)$, $i=\sqrt{-1}$, and $H_{\pm}$ are smooth functions of the variable $z$.
By \cite{Tani}, it is known that for $\beta\in\mathbb{N}_0^2$,  $\alpha>0$ and $\gamma>0$,
\begin{align}
\big| \partial_\xi^\beta H_{+}(\alpha,\gamma;z)
\big|&\leq C_{\alpha,\gamma}(\phi_m(t)|\xi|)^{\alpha-\gamma}(1+|\xi|^2)^{-\frac{|\beta|}{2}}
\quad if \quad \phi_m(t)|\xi|\geq 1, \label{equ:2.5} \\
\big| \partial_\xi^\beta H_{-}(\alpha,\gamma;z)\big|&\leq C_{\alpha,\gamma}(\phi_m(t)|\xi|)^{-\alpha}(1+|\xi|^2)^{-\frac{|\beta|}{2}}
\quad if \quad \phi_m(t)|\xi|\geq 1. \label{equ:2.6}
\end{align}
We now give illustrations on $V_1(t, D_x)u_0(x)$. Let $\chi(s)\in C^{\infty}(\Bbb R)$
with
\begin{align}\label{L-1}
\chi(s)=
\left\{ \enspace
\begin{aligned}
1 \qquad &\text{for $s\geq2$,} \\
0 \qquad &\text{for $s\leq1$.}
\end{aligned}
\right.
\end{align}
Then
\begin{equation}
\begin{split}
V_1(t,|\xi|)\hat{u_0}(\xi)&=\chi(\phi_m(t)|\xi|)V_1(t,|\xi|)\hat{u_0}(\xi)+(1-\chi(\phi_m(t)|\xi|))V_1(t,|\xi|)\hat{u_0}(\xi) \\
&=:\hat{v}_1(t,\xi)+\hat{v}_2(t,\xi).
\end{split}
\label{equ:2.7}
\end{equation}
By \eqref{equ:2.3}, \eqref{equ:2.5} and \eqref{equ:2.6}, one can derive that
\begin{equation}
{v}_1(t,x)=C_m\biggl(\int_{\mathbb{R}^2}e^{i(x\cdot\xi+\phi_m(t)|\xi|)}a_{11}(t,\xi)\hat{u_0}(\xi)\md\xi+
\int_{\mathbb{R}^2}e^{i(x\cdot\xi-\phi_m(t)|\xi|)}a_{12}(t,\xi)\hat{u_0}(\xi)\md\xi\biggr), \label{equ:2.8}
\end{equation}
where $C_m>0$ is a generic constant depending on $m$, and for $\beta\in\mathbb{N}_0^2$,
\begin{equation*}
\big| \partial_\xi^\beta a_{1l}(t,\xi)\big|\leq C_{l\beta}|\xi|^{-|\beta|}\big(1+\phi_m(t)|\xi|\big)^{-\frac{m}{2(m+2)}},
\qquad l=1,2.
\end{equation*}
We next analyze $v_2(t,x)$. It follows from \cite{Erd1} or \cite{Yag2} that
\begin{equation*}
V_1(t,|\xi|)=e^{-\frac{z}{2}}\Phi\Big(\frac{m}{2(m+2)},\frac{m}{m+2};z\Big), 
\end{equation*}
where the confluent hypergeometric function $\Phi$ is analytic on the variable
$z=2i\phi_m(t)|\xi|$. Note that
\begin{equation*}
\Big|\partial_\xi\big\{\big(1-\chi(\phi_m(t)|\xi|)\big)V_1(t,|\xi|)\big\}\Big|\leq C(1+\phi_m(t)|\xi|)^{-\frac{m}{2(m+2)}}|\xi|^{-1}
\end{equation*}
and
\begin{equation*}
\Big|\partial_\xi^{\beta}\big\{\big(1-\chi(\phi_m(t)|\xi|)\big)V_1(t,|\xi|)\big\}\Big|\leq
C(1+\phi_m(t)|\xi|)^{-\frac{m}{2(m+2)}}|\xi|^{-|\beta|}.
\end{equation*}
Then
\begin{equation}
v_2(t,x)=C_m\biggl(\int_{\mathbb{R}^2}e^{i(x\cdot\xi+\phi_m(t)|\xi|)}a_{21}(t,\xi)\hat{u_0}(\xi)\md\xi
+\int_{\mathbb{R}^2}e^{i(x\cdot\xi-\phi_m(t)|\xi|)}a_{22}(t,\xi)\hat{u_0}(\xi)\md\xi\biggr), \label{equ:2.10}
\end{equation}
where
\begin{equation*}
\big| \partial_\xi^\beta a_{2l}(t,\xi)\big|\leq C_{l\beta}\big(1+\phi_m(t)|\xi|\big)^{-\frac{m}{2(m+2)}}|\xi|^{-|\beta|},
\qquad l=1,2.
\end{equation*}

Substituting \eqref{equ:2.8} and \eqref{equ:2.10} into \eqref{equ:2.7} yields
\[V_1(t, D_x)u_0(x)=C_m\biggl(\int_{\mathbb{R}^2}e^{i(x\cdot\xi+\phi_m(t)|\xi|)}a_1(t,\xi)\hat{u_0}(\xi)\md\xi
+\int_{\mathbb{R}^2}e^{i(x\cdot\xi-\phi_m(t)|\xi|)}a_2(t,\xi)\hat{u_0}(\xi)\md\xi\biggr),\]
where $a_l$ $(l=1,2)$ satisfies
\begin{equation}
|\partial_\xi^\beta a_l(t,\xi)\big|\leq C_{l\beta}\big(1+\phi_m(t)|\xi|\big)^{-\frac{m}{2(m+2)}}|\xi|^{-|\beta|}. \label{equ:2.11}
\end{equation}

Similarly,
$$V_2(t,D_x)u_1(x)=C_m t\left(\int_{\R^2}
e^{i\left(x\cdot\xi+\phi_m(t)|\xi|\right)}b_1(t,\xi)\hat{u_1}(\xi)\,d\xi
+\int_{\R^2}e^{i\left(x\cdot\xi-\phi_m(t)|\xi|\right)}b_2(t,\xi)
\hat{u_1}(\xi)\,d\xi\right)
$$
with $b_l$ ($l=1,2$) satisfying
\begin{equation}\label{equ:3.16'}
  \bigl|\partial_\xi^\beta b_l(t,\xi)\bigr|\leq
  C_{l\beta}\left(1+\phi_m(t)|\xi|\right)^{-\f{m+4}{2(m+2)}}
  |\xi|^{-|\beta|}.
\end{equation}

\subsection{Explicit solutions of inhomogenous generalized Tricomi equations}\label{Sub-2}

Let $w$ solve the 2-D inhomogeneous problem
\begin{equation}\label{equ:3.3}
\left\{ \enspace
\begin{aligned}
&\partial_t^2 w-t^m\triangle w=F(t,x), \quad t\ge 1,\\
&w(1,x)=0,\quad \partial_tw(1,x)=0.
\end{aligned}
\right.
\end{equation}
Then by Duhamel's principle and \eqref{SW-1}, one has
\begin{equation*}
\begin{split}
w(t,x)=\int_1^t\left(V_2(t, D_x)V_1(\tau, D_x)-V_1(t, D_x)V_2(\tau,
D_x)\right)F(\tau,x)\,\mathrm{d}\tau.
\end{split}
\end{equation*}
Now we only deal with the term $\int_1^tV_2(t,
D_x)V_1(\tau, D_x)F(\tau,x)d\tau$ since the treatment on the term
$\int_1^tV_1(t, D_x)V_2(\tau, D_x)F(\tau,x)d\tau$ is completely
analogous. Set
\begin{align*}
w_1(t,x)&=\int_1^t\chi(\phi_m(t)D_x)\chi(\phi_m(\tau)D_x)V_2(t,
D_x)V_1(\tau, D_x)F(\tau,x)\,\mathrm{d}\tau, \\
w_2(t,x)&=\int_1^t\chi(\phi_m(t)D_x)\left(1-\chi(\phi_m(\tau)D_x)\right)
V_2(t, D_x)V_1(\tau, D_x)F(\tau,x)\,\mathrm{d}\tau, \\
w_3(t,x)&=\int_1^t\left(1-\chi(\phi_m(t)D_x)\right)\chi(\phi_m(\tau)D_x)V_2(t,
D_x)V_1(\tau, D_x)F(\tau,x)\,\mathrm{d}\tau, \\
w_4(t,x)&=\int_1^t\left(1-\chi(\phi_m(t)D_x)\right)
\left(1-\chi(\phi_m(\tau)D_x)\right)V_2(t, D_x)V_1(\tau, D_x)F(\tau,x)\,\mathrm{d}\tau,
\end{align*}
where $\chi(s)$ is defined in \eqref{L-1}. As in Subsection \ref{Sub-1},
by analyzing each $w_j$ ($1\le j\le 4$), we can obtain
\begin{equation}\label{SW-2}
w(t,x)=
\int_1^t\int_{\R^2}e^{i\left(x\cdot\xi-(\phi_m(t)-\phi_m(\tau))|\xi|\right)}
a(t,\tau,\xi)\hat{F}(\tau,\xi)\,\mathrm{d}\xi \mathrm{d}\tau,
\end{equation}
where $a(t,\tau,\xi)$ satisfies
\begin{equation}\label{NN}
\begin{aligned}
\bigl| \partial_\xi^\beta a(t,\tau,\xi)\bigr|\leq
C\left(1+\phi_m(t)|\xi|\right)^{-\f{m}{2(m+2)}}
\left(1+\phi_m(\tau)|\xi|\right)^{-\f{m}{2(m+2)}}|\xi|^{-\frac{2}{m+2}-|\beta|}.
\end{aligned}
\end{equation}

\subsection{Some useful Lemmas}
In this part, we recall some useful results which will be utilized later.
Assume $\vp \in C_{0}^{\infty}(\mathbb{R}^+)$, $\vp\ge 0$,  $\vp\not\equiv 0$, and
\begin{equation}\label{chi}
\operatorname{supp} \vp \subseteq(\frac{1}{2}, 2), \quad \sum_{j=-\infty}^{\infty} \vp\left(2^{-j} \tau\right) \equiv 1 \quad \text { for } \quad \tau>0.
\end{equation}
Define the Littlewood-Paley decomposition of function $f(t,x)$ with respect to the space variable $x\in\Bbb R^2$ as
$$
f_{j}(t, x)=\vp(2^{-j}|D|)f=(2 \pi)^{-2} \int_{\mathbb{R}^{2}} e^{i x \cdot \xi} \vp(\frac{|\xi|}{2^{j}}) \hat{f}(t, \xi) \mathrm{d} \xi.
$$

By  \cite[Lemma 3.8]{LS}, one has
\begin{lemma}\label{lem3-E}
It holds that
$$
\|f\|_{L_{t}^{s} L_{x}^{q}} \leq C\big(\sum_{j=-\infty}^{\infty}\left\|f_{j}\right\|_{L_{t}^{s} L_{x}^{q}}^{2}\big)^{\frac{1}{2}} \quad \text { for } \quad 2 \leq q<\infty \text { and } 2 \leq s \leq \infty
$$
and
$$
\big(\sum_{j=-\infty}^{\infty}\left\|f_{j}\right\|_{L_{t}^{r} L_{x}^{p}}^{2}\big)^{\frac{1}{2}} \leq C\|f\|_{L_{t}^{r} L_{x}^{p}} \quad \text { for } \quad 1<p \leq 2 \quad \text { and } \quad 1 \leq r \leq 2.
$$
\end{lemma}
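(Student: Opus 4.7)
Both inequalities are classical consequences of the Littlewood--Paley square function theorem in the spatial variable, combined with Minkowski's integral inequality in the appropriate direction. The key technical facts I will invoke are: (a) for any $1<q<\infty$, one has the two-sided bound $\|g\|_{L^q(\mathbb{R}^2)}\sim \|(\sum_j |g_j|^2)^{1/2}\|_{L^q(\mathbb{R}^2)}$, where $g_j=\vp(2^{-j}|D|)g$; and (b) for any measure space and any $1\le a\le b\le\infty$, Minkowski's integral inequality gives $\|\|h_j(y)\|_{\ell^2_j}\|_{L^b_y}\le \|\|h_j\|_{L^b_y}\|_{\ell^2_j}$ when $b\ge 2$, and the reverse when $b\le 2$. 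Nothing deep is needed; the main care is to use these in the correct order and direction.

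\textbf{First inequality.} I fix $t$ and apply the spatial Littlewood--Paley inequality to get
\[
\|f(t,\cdot)\|_{L^q_x}\le C\,\bigl\|\bigl(\textstyle\sum_j |f_j(t,\cdot)|^2\bigr)^{1/2}\bigr\|_{L^q_x}.
\]
Since $q\ge 2$, Minkowski in $L^{q/2}_x$ gives
\[
\bigl\|\bigl(\textstyle\sum_j |f_j(t,\cdot)|^2\bigr)^{1/2}\bigr\|_{L^q_x}\le \bigl(\textstyle\sum_j \|f_j(t,\cdot)\|_{L^q_x}^2\bigr)^{1/2}.
\]
Taking the $L^s_t$-norm of both sides and using Minkowski once more in $L^{s/2}_t$ (which is admissible because $s\ge 2$) yields
\[
\bigl\|\bigl(\textstyle\sum_j \|f_j(t,\cdot)\|_{L^q_x}^2\bigr)^{1/2}\bigr\|_{L^s_t}\le \bigl(\textstyle\sum_j \|f_j\|_{L^s_t L^q_x}^2\bigr)^{1/2},
\]
which concludes the first estimate. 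The case $s=\infty$ is handled by the usual essential-supremum version of Minkowski.

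\textbf{Second inequality.} Here I run the same chain in the reverse direction, using the reverse Minkowski inequalities that hold when the outer exponent is $\le 2$. Since $r\le 2$, Minkowski gives
\[
\bigl(\textstyle\sum_j \|f_j\|_{L^r_t L^p_x}^2\bigr)^{1/2}\le \bigl\|\bigl(\textstyle\sum_j \|f_j(t,\cdot)\|_{L^p_x}^2\bigr)^{1/2}\bigr\|_{L^r_t}.
\]
Since $p\le 2$, the reverse spatial Minkowski yields
\[
\bigl(\textstyle\sum_j \|f_j(t,\cdot)\|_{L^p_x}^2\bigr)^{1/2}\le \bigl\|\bigl(\textstyle\sum_j |f_j(t,\cdot)|^2\bigr)^{1/2}\bigr\|_{L^p_x}.
\]
Finally, the spatial Littlewood--Paley inequality (applicable because $1<p<\infty$) bounds the last quantity by $C\|f(t,\cdot)\|_{L^p_x}$. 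Taking $L^r_t$-norms completes the proof.

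\textbf{Main obstacle.} There is no serious obstacle: this is a packaging of well-known tools. The only points to be careful about are the exponent conditions used at each Minkowski step (namely $s,q\ge 2$ in the first inequality, and $r,p\le 2$ in the second), and the fact that the vector-valued Littlewood--Paley theorem requires $1<p<\infty$, which is exactly the hypothesis $1<p\le 2$ in the second statement (the endpoint $p=1$ is correctly excluded). The endpoint $r=1$ in the second inequality and $s=\infty$ in the first are handled trivially because Minkowski is an identity (or an essential-supremum estimate) in those degenerate cases.
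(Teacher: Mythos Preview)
Your argument is correct and is the standard proof of this fact: apply the spatial Littlewood--Paley square-function inequality, then use Minkowski's inequality (in the appropriate direction, dictated by whether the exponents are $\ge 2$ or $\le 2$) to interchange the $\ell^2_j$ sum with the $L^q_x$ and $L^s_t$ (resp.\ $L^p_x$ and $L^r_t$) norms. The exponent restrictions are exactly those needed for each Minkowski step and for the Littlewood--Paley theorem, as you observe.

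The paper does not actually give a proof of this lemma; it simply quotes it from \cite[Lemma~3.8]{LS}. So there is no ``paper's proof'' to compare against, but your write-up is precisely the argument that underlies the cited result.
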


In addition, although (2-20) of \cite{HWY4} is suitable for $n\ge 3$,
by checking the proof procedure carefully, we still have
\begin{lemma}\label{lem2-L}
For the solution $v$ of 2-D problem \eqref{a-1}, it holds that for any small fixed constant $\delta>0$,
\begin{equation}\label{equ:2.22-LL}
\begin{split}
|v(t,x)|\leq C_{m,\delta}(1+\phi_m(t))^{-\frac{m+1}{m+2}}(1+\big||x|-\phi_m(t)\big|)^{-\frac{m+1}{m+2}+\delta}
\big(\|u_0\|_{W^{\frac{m+3}{m+2}+\delta,1}(\mathbb{R}^2)}
+\|u_1\|_{W^{\frac{m+1}{m+2}+\delta,1}(\mathbb{R}^2)}\big),
\end{split}
\end{equation}
where $C_{m,\delta}>0$ is a constant depending on $m$ and $\delta$.
\end{lemma}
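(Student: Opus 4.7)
The strategy is to revisit the proof of the analogous pointwise bound (2-20) in \cite{HWY4} (stated there for $n\ge 3$) and verify that each step remains valid in two space dimensions. By linearity I treat $V_1(t,D_x)u_0$ and $V_2(t,D_x)u_1$ separately; since the two are parallel (the only differences being the extra prefactor $t$ together with the slightly stronger symbol decay \eqref{equ:3.16'} enjoyed by $V_2$), I focus on the first. Using the representation from Subsection \ref{Sub-1}, it suffices to bound
\begin{equation*}
I_{\pm}(t,x)=\int_{\R^2}e^{i(x\cdot\xi\pm\phi_m(t)|\xi|)}a(t,\xi)\hat{u_0}(\xi)\,\md\xi,
\end{equation*}
where $a$ obeys $|\partial_\xi^\beta a(t,\xi)|\le C_\beta(1+\phi_m(t)|\xi|)^{-\frac{m}{2(m+2)}}|\xi|^{-|\beta|}$ by \eqref{equ:2.11}. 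The different Sobolev exponents $\frac{m+3}{m+2}+\delta$ and $\frac{m+1}{m+2}+\delta$ in the final bound reflect exactly the different decay rates of the symbols in \eqref{equ:2.11} versus \eqref{equ:3.16'}.

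Next I insert a Littlewood--Paley decomposition $1=\sum_{j\in\Z}\vp(2^{-j}|\xi|)$ and write $I_\pm=\sum_j I_\pm^j$. Passing to polar coordinates $\xi=\rho\omega$ with $\omega\in S^1$ and using the Bessel identity $\int_{S^1}e^{i\rho|x|\omega\cdot e}\,\md\omega=2\pi J_0(\rho|x|)$, together with the asymptotic $J_0(s)=\sqrt{2/(\pi s)}\cos(s-\pi/4)+O(s^{-3/2})$ for $s\gtrsim 1$ and the Taylor expansion for $s\lesssim 1$, each $I_\pm^j$ reduces to a one--dimensional oscillatory integral with phase $\rho(\pm\phi_m(t)\pm|x|)$, symbol supported in $\rho\sim 2^j$ and inheriting the decay factor $(1+\phi_m(t)\rho)^{-m/(2(m+2))}$, and an overall amplitude of size $|x|^{-1/2}$ coming from the Bessel asymptotic. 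This is precisely where the dimension $n=2$ enters: the $(\rho|x|)^{-1/2}$ factor is weaker than the $(\rho|x|)^{-(n-1)/2}$ decay available for $n\ge 3$.

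Then I integrate by parts in $\rho$ against the oscillation. For the ``$+$'' sign the phase is non--stationary of size $\phi_m(t)+|x|$, while for the ``$-$'' sign it is of size $\bigl||x|-\phi_m(t)\bigr|$, exactly the quantity appearing in the target weight. By finite propagation $\operatorname{supp}v(t,\cdot)\subseteq\{|x|\le\phi_m(t)+1\}$ together with $\operatorname{supp}(u_0,u_1)\subseteq B(0,1)$, one may assume $|x|\ge 1$ so that the $|x|^{-1/2}$ factor is harmless. Applying $N$ integrations by parts yields a gain $(\bigl||x|-\phi_m(t)\bigr|\rho)^{-N}$ at the cost of $\rho^{-N}$ from the symbol derivatives; choosing $N$ large when $2^j\gg (1+\bigl||x|-\phi_m(t)\bigr|)^{-1}$ and using the trivial bound $\|\hat u_0\|_{L^\infty}\le\|u_0\|_{L^1}$ together with the symbol decay on the complementary range, the resulting dyadic sum is dominated by $(1+\phi_m(t))^{-\frac{m+1}{m+2}}(1+\bigl||x|-\phi_m(t)\bigr|)^{-\frac{m+1}{m+2}+\delta}$, while the $\rho^{|\beta|}$ weights produced by the repeated differentiation of the symbol translate into the Sobolev norm $\|u_0\|_{W^{\frac{m+3}{m+2}+\delta,1}}$.

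The main obstacle is the balance at the transition scale $2^j\sim (1+\bigl||x|-\phi_m(t)\bigr|)^{-1}$, where neither integration by parts nor the symbol decay is sharp enough on its own to yield a summable dyadic series in $j$. This is the two--dimensional manifestation of the borderline behavior ruled out automatically by the $(n-1)/2$ angular decay for $n\ge 3$ in \cite{HWY4}; the small parameter $\delta>0$ is injected precisely to generate a factor $\sum_j 2^{-j\delta}<\infty$ that absorbs what would otherwise be a logarithmic divergence. Once this $\delta$ loss is admitted, every remaining estimate in the proof of (2-20) of \cite{HWY4} transfers line by line to $n=2$, yielding the stated pointwise inequality.
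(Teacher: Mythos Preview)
Your proposal is correct and matches the paper's approach: the paper gives no self-contained proof of this lemma but simply asserts that, although (2-20) of \cite{HWY4} is stated there for $n\ge 3$, ``by checking the proof procedure carefully'' it continues to hold when $n=2$. Your plan is precisely that careful check---Littlewood--Paley decomposition, reduction via the Bessel/stationary-phase expansion of the angular integral, and radial integration by parts with a $\delta$-loss to sum the dyadic pieces---which is the argument underlying (2-20) in \cite{HWY4}; the paper does not add anything beyond the reference.
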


\section{Weighted Strichartz estimates for homogenous Tricomi equations}\label{E-8}

In this section, we will establish two classes of weighted Strichartz estimates for 2-D homogenous problem \eqref{a-1}.

\subsection{Spacetime-weighted Strichartz estimate}
\begin{lemma}\label{th2-1}
Let $v$ be a solution of \eqref{a-1}.
Then it holds that
\begin{equation}\label{a-2}
\begin{aligned}
&\|(\left(\phi_{m}(t)+2\right)^{2}-|x|^{2})^{\gamma} t^{\frac{\alpha}{q}} v\|_{L^{q}\left(\left[1, +\infty\right) \times \mathbb{R}^{2}\right)} \\
& \leq C(\|u_0\|_{W^{\frac{m+3}{m+2}+\delta, 1}\left(\mathbb{R}^{2}\right)}+\|u_1\|_{W^{\frac{m+1}{m+2}+\delta, 1}\left(\mathbb{R}^{2}\right)}),
\end{aligned}
\end{equation}
where $\alpha>-1$, $p_{\text {crit }}(2,m, \alpha)+1<q<p_{\text {conf}}(2,m,\alpha)+1$,
$C=C(m,q,\al,\gamma,\delta)>0$,
the positive constants $\gamma$ and $\delta$ fulfill
\begin{equation}\label{con3}
0<\gamma<\frac{m+1}{m+2}-\frac{m+4+2\al}{(m+2) q}, \quad 0<\delta<\frac{m+1}{m+2}-\gamma-\frac{1}{q}.
\end{equation}
\end{lemma}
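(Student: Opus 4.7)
The plan is to deduce \eqref{a-2} directly from the pointwise decay in Lemma \ref{lem2-L} combined with the finite propagation property of the Tricomi operator. Since $(u_0,u_1)$ is supported in $B(0,1)$ and the relevant characteristic speed is $t^{m/2}$, we have $v(t,x)=0$ whenever $|x|>\phi_m(t)-\phi_m(1)+1$, so the spatial integration in \eqref{a-2} may be restricted to the disk $|x|\leq\phi_m(t)+1$. On this set the elementary factorization
\[
(\phi_m(t)+2)^2-|x|^2=(\phi_m(t)+2+|x|)(\phi_m(t)+2-|x|)\leq C(1+\phi_m(t))\bigl(1+\bigl||x|-\phi_m(t)\bigr|\bigr)
\]
holds, because $\phi_m(t)+2-|x|\geq 1$ and $\phi_m(t)+2-|x|\sim 1+||x|-\phi_m(t)|$ on the relevant range of $|x|$.

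Next, I would raise the pointwise bound of Lemma \ref{lem2-L} to the $q$-th power, factor out the data norm $\mathcal{E}:=\|u_0\|_{W^{(m+3)/(m+2)+\delta,1}}+\|u_1\|_{W^{(m+1)/(m+2)+\delta,1}}$, and insert the factorization above. This reduces \eqref{a-2} to the scalar bound
\[
\int_1^\infty\int_{|x|\leq\phi_m(t)+1}t^{\alpha}(1+\phi_m(t))^{\gamma q-\frac{(m+1)q}{m+2}}\bigl(1+\bigl||x|-\phi_m(t)\bigr|\bigr)^{\beta}\,dx\,dt\leq C,
\]
where $\beta:=(\gamma-\tfrac{m+1}{m+2}+\delta)q$. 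The upper bound on $\delta$ in \eqref{con3} is precisely the statement $\beta<-1$, which guarantees integrability of the cone-direction weight.

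Passing to polar coordinates, the spatial integral equals a constant multiple of $\int_0^{\phi_m(t)+1}(1+|\phi_m(t)-r|)^{\beta}r\,dr$. Splitting at $r=\phi_m(t)/2$: on $[0,\phi_m(t)/2]$ the weight is comparable to $\phi_m(t)^{\beta}$, contributing $O(\phi_m(t)^{\beta+2})$; on $[\phi_m(t)/2,\phi_m(t)+1]$ the factor $r$ is $O(\phi_m(t))$ while the one-dimensional integral is $O(1)$ because $\beta<-1$, contributing $O(\phi_m(t))$. Since $\beta+2<1$, the spatial integral is $O(1+\phi_m(t))$ uniformly in $t\geq 1$. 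What remains is the time integral
\[
\int_1^\infty t^{\alpha}(1+\phi_m(t))^{1+\gamma q-\frac{(m+1)q}{m+2}}\,dt,
\]
which, using $1+\phi_m(t)\sim t^{(m+2)/2}$ for large $t$, converges iff $\alpha+\tfrac{m+2}{2}\bigl(1+\gamma q-\tfrac{(m+1)q}{m+2}\bigr)<-1$. A short rearrangement shows this is exactly the first inequality of \eqref{con3}, closing the estimate.

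The main subtlety lies in verifying that the pointwise bound of Lemma \ref{lem2-L} is sharp enough at both scales: the interior decay exponent $-(m+1)/(m+2)$ must exactly balance the weight growth to yield the optimal $\gamma$-range, and the cone-neighborhood gain of exponent $-(m+1)/(m+2)+\delta$ must be tight enough that the spatial integral loses only a single power of $1+\phi_m(t)$. Any loss in Lemma \ref{lem2-L} would force a strict subset of the admissible $(\gamma,\delta)$ and the full $q$-range in the statement would collapse. Granted the stated exponents, however, the remainder of the argument is the routine bookkeeping outlined above.
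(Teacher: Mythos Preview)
Your proposal is correct and follows essentially the same approach as the paper: insert the pointwise bound from Lemma~\ref{lem2-L}, absorb the spacetime weight via the factorization $(\phi_m(t)+2)^2-|x|^2\lesssim (1+\phi_m(t))(1+||x|-\phi_m(t)|)$, and check that the resulting radial and time integrals converge precisely under the hypotheses \eqref{con3}. The paper compresses the verification into a single display \eqref{a-4}, while you spell out the radial splitting and the algebra translating the convergence conditions back to \eqref{con3}; the arguments are otherwise identical.
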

\begin{proof}
By Lemma \ref{lem2-L}, $v$ satisfies that for  $\alpha>-1$ and $t\geq1$,
\begin{equation}\label{a-3}
\begin{aligned}
|t^{\frac{\alpha}{q}} v| &\leq  C \phi_{m}(t)^{-\frac{m+1}{m+2}+\frac{2 \alpha}{q(m+2)}}\left(1+\left||x|-\phi_{m}(t)\right|\right)^{-\frac{m+1}{m+2}+\delta} \\
& \qquad\times\big(\|u_0\|_{W^{\frac{m+3}{m+2}+\delta, 1}\left(\mathbb{R}^{2}\right)}
+\|u_1\|_{W^{\frac{m+1}{m+2}+\delta, 1}\left(\mathbb{R}^{2}\right)}\big).
\end{aligned}
\end{equation}
Then it follows from \eqref{con3}, $\alpha>-1$ and $t\geq1$ that
\begin{equation}\label{a-4}
\begin{aligned}
&\|(\left(\phi_{m}(t)+2\right)^{2}-|x|^{2})^{\gamma} t^{\frac{\alpha}{q}} v\|_{L^{q}\left(\left[1, \infty\right) \times \mathbb{R}^{2}\right)}^{q} \\
&\le C\int_{1}^{\infty} \phi_{m}(t)^{q\left(-\frac{m+1}{m+2}+\frac{2 \alpha}{q(m+2)}+\gamma\right)} \int_{0}^{\phi_{m}(t)+1}\left(1+\left|r-\phi_{m}(t)\right|\right)^{q\left(\gamma-\frac{m+1}{m+2}+\delta\right)} r \mathrm{d} r \mathrm{d} t\\
& \qquad\times\big(\|u_0\|_{W^{\frac{m+3}{m+2}+\delta, 1}\left(\mathbb{R}^{2}\right)}
+\|u_1\|_{W^{\frac{m+1}{m+2}+\delta, 1}\left(\mathbb{R}^{2}\right)}\big)\\
&\leq C \big(\|u_0\|_{W^{\frac{m+3}{m+2}+\delta, 1}\left(\mathbb{R}^{2}\right)}
+\|u_1\|_{W^{\frac{m+1}{m+2}+\delta, 1}\left(\mathbb{R}^{2}\right)}\big).
\end{aligned}
\end{equation}
Thus, the proof of \eqref{a-2}  is completed.
\end{proof}

\subsection{Time-weighted angular mixed-norm Strichartz estimate}

For the solution $v$ of \eqref{a-1}, it follows from \eqref{SW-1} that
\[
v(t,x)=V_1(t, D_x)u_0(x)+V_2(t, D_x)u_1(x).
\]
In terms of \eqref{equ:2.11} and \eqref{equ:3.16'},
it suffices  to deal with
$\int_{\R^2}e^{i\left(x\cdot\xi-\phi_m(t)|\xi|\right)}
a_2(t,\xi)\hat{u_0}(\xi)\mathrm{d}\xi$ since the remaining part of
$V_1(t,D_x)u_0(x)$ and $V_2(t,D_x)u_1(x)$ may be similarly or simpler treated. Set
\begin{equation}\label{equ:3.17}
t^{\f{m-p+1}{p+1}}(Au_0)(t,x)=\int_{\R^2}e^{i\left(x\cdot\xi-\phi_m(t)|\xi|\right)}t^{\f{m-p+1}{p+1}} a_2(t,\xi)
  \hat{u_0}(\xi)\,\mathrm{d}\xi.
\end{equation}
We next show
\begin{equation}\label{q-6}
\|t^{\f{m-p+1}{p+1}}(A u_0)(t, x)\|_{L_{t}^{q} L_{r}^{\nu} L_{\theta}^{2}([1,+\infty) \times [0,\infty)\times [0, 2\pi])}
\leq C\|u_0\|_{\dot{H}^{s}\left(\mathbb{R}^{2}\right)},
\end{equation}
where $q \geq 2$ and $\nu \geq 2$ are  suitable constants related to $s$.
It follows from a scaling argument that
\begin{equation}\label{q-7}
\f{m-p+1}{p+1}+\frac{1}{q}+\frac{m+2}{\nu}=\frac{m+2}{2}(1-s).
\end{equation}
On the other hand, a different scaling argument (see Page 1845 of  \cite{SK} or \cite{HWY2})
yields
\begin{equation}\label{q-8}
\frac{1}{q} \leq \f{m+1}{2}-\frac{m+2}{2\nu}-\f{m-p+1}{p+1}.
\end{equation}

In fact, for small $\ve>0$, let $\xi=\left(\xi_{1}, \xi_{2}\right) \in \mathbb{R}^{2} \backslash\{0\}$ and
$D_{\ve}=\left\{\xi \in \mathbb{R}^{2}:\left|\xi_{1}-1\right|<1 / 2,\left|\xi_{2}\right|<\ve\right\}$.
Let $\hat{u_0}(\xi)=\chi_{D_{\ve}}(\xi)$ with $\chi_{D_{\ve}}(\xi)$ being the characteristic function of domain $D_{\ve}$.
Note that for $\xi\in D_{\ve}$, $|\xi|-\xi_{1}=\frac{|\xi|^{2}-\xi_{1}^{2}}{|\xi|+\xi_{1}}
=\frac{\left|\xi_{2}\right|^{2}}{|\xi|+\xi_{1}} \sim \ve^{2}$
holds. Due to
\begin{equation}\label{q-9}
t^{\f{m-p+1}{p+1}}(A u_0)(t, x) =e^{i\left(x_{1}-\phi_m(t)\right)} \int_{D_{\ve}} e^{i\left(-\phi_m(t)\left(|\xi|-\xi_{1}\right)+\left(x_{1}-\phi_m(t)\right)\left(\xi_{1}-1\right)+x_{2} \xi_{2}\right)} t^{\f{m-p+1}{p+1}}a_{2}(t, \xi) \mathrm{d} \xi,
\end{equation}
then for $(t,x)\in R_{\ve}=\left\{(t, x): \phi_m(t) \sim \ve^{-1},\left|x_{1}-\phi_m(t)\right| \lesssim 1,\left|x_{2}\right| \lesssim \ve^{-1}\right\}$ and $\xi \in D_{\ve}$, one has
$$
t^{\f{m-p+1}{p+1}}|(A u_0)(t, x)| \geq|D_{\ve}| \ve^{\f{m}{2(m+2)}-\f{2(m-p+1)}{(m+2)(p+1)}}.
$$
By choosing  $s=0$ in \eqref{q-6}, then
\begin{equation*}
\begin{aligned}
&\f{\|t^{\f{m-p+1}{p+1}}(A u_0)(t, x)\|_{L_{t}^{q} L_{r}^{\nu} L_{\theta}^{2}([1,+\infty) \times [0,\infty)\times [0, 2\pi])}}{\|u_0\|_{L^{2}\left(\mathbb{R}^{2}\right)}}\\
&\geq\f{|D_{\ve}|\ve^{\f{m}{2(m+2)}-\f{2(m-p+1)}{(m+2)(p+1)}}
\|\chi_{R_{\ve}}\|_{L_{t}^{q} L_{r}^{\nu} L_{\theta}^{2}([1,+\infty) \times [0,\infty)\times [0, 2\pi])} }{{|D_{\ve}|}^\f{1}{2}}\\
&\sim\ve^{\f{m+1}{m+2}-\f{2}{(m+2)q}-\f{1}{\nu}-\f{2(m-p+1)}{(m+2)(p+1)}}.
\end{aligned}
\end{equation*}
Assume that $\ve>0$ is small, when \eqref{q-6} holds, we require
$$
\f{m+1}{m+2}-\f{2}{(m+2)q}-\f{1}{\nu}-\f{2(m-p+1)}{(m+2)(p+1)} \geq 0,
$$
which is equivalent to \eqref{q-8}.

\begin{lemma}\label{lem1}
Under the restrictions \eqref{q-7} and \eqref{q-8}, it holds that
\begin{equation}\label{q-11}
\|t^{\f{m-p+1}{p+1}}(A u_0)(t, x)\|_{L_{t}^{q} L_{r}^{\nu} L_{\theta}^{2}([1,+\infty)\times [0,\infty)\times [0, 2\pi])} \leq C\|u_0\|_{\dot{H}^{s}\left(\mathbb{R}^{2}\right)},
\end{equation}
where $(q, \nu) \neq(\infty, \infty)$ satisfies
\begin{equation}\label{q-10}
q, \nu \geq 2, \quad \f{1}{\nu}+\f{m-p+1}{(m+2)(p+1)}>0,
\end{equation}
\begin{equation}\label{q-10'}
\frac{1}{q} \leq \f{m+1}{2}-\frac{m+2}{2\nu}-\f{m-p+1}{p+1}, \quad -2<m-p+1\leq-1,
\end{equation}
and $s=1-\f{2}{\nu}-\f{2}{m+2}(\f{1}{q}+\f{m-p+1}{p+1})$.
\end{lemma}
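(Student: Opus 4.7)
The plan is to establish \eqref{q-11} by a three-step reduction: a Littlewood-Paley decomposition in frequency, a scaling reduction to unit frequency, and an angular Fourier expansion that exploits the $L^2_\theta$ norm. First, I would decompose $u_0=\sum_{j\in\Z}u_{0,j}$ with $\widehat{u_{0,j}}$ supported in $|\xi|\sim 2^j$. Applying Lemma~\ref{lem3-E} to the left-hand side in the outer $L^q_t$ variable (with $q\ge 2$) replaces the mixed norm by a square function $\big(\sum_j\|t^{\frac{m-p+1}{p+1}}A u_{0,j}\|_{L^q_tL^\nu_rL^2_\theta}^2\big)^{1/2}$, so it suffices to prove the uniform frequency-localized bound $\|t^{\frac{m-p+1}{p+1}}Au_{0,j}\|_{L^q_tL^\nu_rL^2_\theta}\lesssim 2^{js}\|u_{0,j}\|_{L^2}$. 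The operator $\p_t^2-t^m\Delta$ enjoys the scaling $u(t,x)\mapsto u(\lambda t,\lambda^{(m+2)/2}x)$; choosing $\lambda=2^{-2j/(m+2)}$ and checking that all the weights match is precisely the content of relation \eqref{q-7}, while \eqref{q-8} is the necessary Knapp condition already verified in the paragraph preceding the lemma. Thus the problem reduces to the single case $j=0$, i.e.\ amplitudes $a_2(t,\xi)$ supported in $|\xi|\sim 1$ and satisfying $|a_2(t,\xi)|\lesssim (1+\phi_m(t))^{-m/(2(m+2))}$.

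Second, passing to polar coordinates $x=(r\cos\theta,r\sin\theta)$ and writing $\xi=\rho(\cos\omega,\sin\omega)$, I would expand $\hat{u_0}(\xi)=\sum_{k\in\Z}b_k(\rho)e^{ik\omega}$. The Jacobi--Anger identity then yields
\begin{equation*}
(Au_0)(t,r,\theta)=\sum_{k\in\Z}i^k e^{ik\theta}\int_0^\infty e^{-i\phi_m(t)\rho}a_2(t,\rho)J_k(r\rho)b_k(\rho)\rho\, d\rho,
\end{equation*}
so orthogonality in $\theta$ collapses the $L^2_\theta$ norm into an $\ell^2_k$ sum of purely radial oscillatory integrals $A_k b_k(t,r)$. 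By Plancherel on the sphere, $\|u_0\|_{\dot H^s}^2\sim \sum_k\int_0^\infty \rho^{2s+1}|b_k(\rho)|^2 d\rho$, so \eqref{q-11} reduces to an estimate of the form $\|t^{(m-p+1)/(p+1)}A_k b_k\|_{L^q_tL^\nu_r}\lesssim \|\rho^{s+1/2}b_k\|_{L^2}$ uniform in $k$, together with $\ell^2$-summability in $k$.

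Third, each $A_k$ is a one-dimensional Fourier integral operator whose dispersive behaviour is governed by the stationary point $\rho=r/\phi_m'(t)\cdot\text{(constant)}$ on the characteristic $r\sim\phi_m(t)$, combined with the Bessel asymptotic $|J_k(s)|\lesssim s^{-1/2}$ for $s\gtrsim k^2$ and $|J_k(s)|\lesssim 1$ otherwise. Splitting the $\rho$-integral by whether $r\rho\gtrsim k^2$ and applying van der Corput in the former regime yields a pointwise kernel bound of the form $|K_k(t,r,r')|\lesssim \phi_m(t)^{-\sigma}$ on the appropriate region; a $TT^\ast$/Keel--Tao argument, interpolated against the trivial $L^\infty_t L^2_x$ energy bound, then produces the required $L^q_tL^\nu_r$ estimate for each angular mode and, after a bookkeeping of the weight $t^{(m-p+1)/(p+1)}$, restores the exponent on the right-hand side. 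Summing in $k$ via Plancherel and in $j$ via Littlewood-Paley completes the proof. The main obstacle is to make the single-mode dispersive bound uniform in $k$ (or lose only logarithms absorbable by the strict inequalities in \eqref{q-10}--\eqref{q-10'}), and to handle the weighted endpoint where $1/q=(m+1)/2-(m+2)/(2\nu)-(m-p+1)/(p+1)$; here the strict positivity $\frac{1}{\nu}+\frac{m-p+1}{(m+2)(p+1)}>0$ together with $m-p+1>-2$ is precisely what guarantees that the radial integral converges near the light cone $r\sim\phi_m(t)$ after insertion of the time weight.
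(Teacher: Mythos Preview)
Your overall architecture (Littlewood--Paley, scaling, angular expansion) shares ingredients with the paper, but the execution diverges in two places where your sketch has genuine gaps.

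First, the square-function step. You invoke Lemma~\ref{lem3-E} to pass from $\|t^{\frac{m-p+1}{p+1}}Au_0\|_{L^q_tL^\nu_rL^2_\theta}$ to an $\ell^2_j$-sum of frequency pieces, but that lemma is stated for $L^s_tL^q_x$ with the Littlewood--Paley projections acting in $x$; it does not directly yield a square-function bound in the nonstandard norm $L^q_tL^\nu_rL^2_\theta$. The paper sidesteps this entirely by working at the Besov level: it proves the frequency-localized bound with $\dot B^{s}_{2,2}$ on the right at the endpoint $q=\infty$ (via the elementary Sobolev embedding $\dot H^{1-2/\nu}(\R^2)\subset L^\nu(\R^2)$, no angular decomposition needed) and with $\dot B^{s}_{2,1}$ at the endpoint $\nu=\infty$, then interpolates; the resulting Besov index interpolates to $\dot B^{s}_{2,2}=\dot H^s$ for the intermediate $(q,\nu)$.

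Second, and more seriously, your Step~3 is where the real work lies and you leave it open. You propose Bessel asymptotics plus van~der~Corput and a $TT^\ast$/Keel--Tao argument to obtain the per-mode $L^q_tL^\nu_r$ bound uniformly in $k$, and you correctly flag uniformity in $k$ as the main obstacle --- but you do not resolve it. The paper avoids this issue: the angular expansion is used \emph{only} at the $\nu=\infty$ endpoint, where one needs a pointwise-in-$r$ bound. After expanding $\hat u_0(\xi)=\sum_k c_k(\rho)e^{ik\theta}$ with $\rho\in[\tfrac12,1]$, the paper rewrites each mode as an integral $\int e^{-ik\omega}\hat\alpha_t(\phi_m(t)-s-r\cos\omega)\,d\omega$ against $\hat c_k(s)$ and applies Lemma~3.4 of \cite{HWY2} to get, directly and uniformly in $k$, the decay $(1+\phi_m(t))^{-\frac{m}{2(m+2)}}(1+|\phi_m(t)-s|)^{-\frac12+\delta}$. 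Integrating this in $t$ (checking the exponent condition \eqref{q-20}) and Plancherel in $s$ gives the $\nu=\infty$ endpoint with no Bessel or $TT^\ast$ machinery. Your route may be salvageable, but as written it substitutes a harder problem (uniform-in-$k$ dispersive estimates) for the one the paper solves with a single black-box kernel lemma.
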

\begin{proof}
Although the proof procedure is similar to that of Lemma 3.2 in \cite{HWY2} for the equation $\p_t^2v-t\Delta v=0$,
we  still give the details for reader's convenience due to the more general Tricomi equation $\p_t^2v-t^m\Delta v=0$
and the appearance of the time weight $t^{\f{m-p+1}{p+1}}$ in \eqref{q-11}.

To get \eqref{q-11}, it suffices to derive the two endpoint cases of $q=\infty, 2 \leq \nu<\infty$ and $\nu=\infty, 2 \leq q<\infty$.
If so, then \eqref{q-11} can be shown by interpolation between these two cases.

Note that the first endpoint case corresponds to $q=\infty$ and $s=1-\frac{2}{\nu}-\f{2(m-p+1)}{(m+2)(p+1)}$.
By $\dot{H}^{1-\frac{2}{\nu}}\left(\mathbb{R}^{2}\right) \subseteq L^{\nu}\left(\mathbb{R}^{2}\right)$ for $2 \leq \nu<\infty$,
then
$$
\begin{aligned}
\|t^{\f{m-p+1}{p+1}}A u_0\|_{L_{t}^{\infty} L_{r}^{\nu} L_{\theta}^{2}\left([1,\infty)\times [0,\infty)\times [0,2\pi]\right)} &\leq C\|t^{\f{m-p+1}{p+1}}A u_0\|_{L_{t}^{\infty} L_{r}^{\nu} L_{\theta}^{\nu}\left([1,\infty)\times [0,\infty)\times [0,2\pi]\right)} \\&\leq C\|t^{\f{m-p+1}{p+1}}A u_0\|_{L_{t}^{\infty} \dot{H}^{1-\frac{2}{\nu}}\left([1,\infty)\times \R^2\right)}.
\end{aligned}
$$
It follows from \eqref{equ:2.11} and \eqref{equ:3.17} that if $\hat{u_0}(\xi)=0$ for $|\xi| \notin\left[\frac{1}{2}, 1\right]$, then
$$
\begin{aligned}
\|t^{\f{m-p+1}{p+1}}A u_0&\|_{L_{t}^{\infty} \dot{H}^{1-\frac{2}{\nu}}\left([1,\infty)\times \R^2\right)}
\leq C\big\| \| \int_{\mathbb{R}^{2}} e^{i(x \cdot \xi-\phi_m(t)|\xi|)} t^{\f{m-p+1}{p+1}}a_{2}(t, \xi) \hat{u_0}(\xi) \mathrm{d} \xi\|_{\dot{H}^{1-\frac{2}{\nu}}\left(\mathbb{R}^{2}\right)}\big\|_{L_{t}^{\infty}[1, \infty)} \\
& \leq C\big\|\||\xi|^{1-\frac{2}{\nu}-\f{2(m-p+1)}{(m+2)(p+1)}}(1+\phi_m(t)|\xi|)^{\f{2(m-p+1)}{(m+2)(p+1)}
-\f{m}{2(m+2)}}|\hat{u_0}(\xi)|\|_{L^{2}\left(\mathbb{R}^{2}\right)}\big\|_{L_{t}^{\infty}[1, \infty)} \\
& \leq C\|u_0\|_{\dot{H}^{1-\frac{2}{\nu}-\f{2(m-p+1)}{(m+2)(p+1)}}\left(\mathbb{R}^{2}\right)}.
\end{aligned}
$$
This, together with \eqref{q-10}, yields
\begin{equation}\label{q-12}
\|t^{\f{m-p+1}{p+1}}A u_0\|_{L_{t}^{\infty} L_{r}^{\nu} L_{\theta}^{2}\left([1,+\infty)\times [0, +\infty)\times [0, 2\pi]\right)} \leq C\|u_0\|_{\dot{H}^{1-\frac{2}{\nu}-\f{2(m-p+1)}{(m+2)(p+1)}}\left(\mathbb{R}^{2}\right)}
=\|u_0\|_{\dot{B}_{2,2}^{1-\frac{2}{\nu}-\f{2(m-p+1)}{(m+2)(p+1)}}\left(\mathbb{R}^{2}\right)}.
\end{equation}

Next we show  \eqref{q-11} with the second  endpoint  case of $\nu=\infty, 2 \leq q<\infty$.
For this purpose, it suffices to prove that for $2 \leq q<\infty$,
\begin{equation}\label{q-13}
\|t^{\f{m-p+1}{p+1}}A u_0\|_{L_{t}^{q} L_{r}^{\infty} L_{\theta}^{2}\left([1,+\infty)\times [0, +\infty)\times [0, 2\pi]\right)} \leq C\|u_0\|_{L^{2}\left(\mathbb{R}^{2}\right)} \quad \text { if } \hat{u_0}(\xi)=0 \text { for }|\xi| \notin[\frac{1}{2}, 1].
\end{equation}

Indeed, define the dyadic decomposition of the operator $A$ as
$$
Au_{0j}(x)=\int_{\mathbb{R}^{2}} e^{i x \cdot \xi} \vp\left(2^{-j}|\xi|\right) \hat{u_0}(\xi) \mathrm{d}\xi, \quad j \in \mathbb{Z}.
$$
By \eqref{q-13} and the compact support property of $\hat{u_0}(\xi)$, one has that for $\hat{u_0}(\xi)=0$ when $|\xi| \notin[\frac{1}{2}, 1]$,
\begin{equation}\label{q-14}
\begin{aligned}
\|t^{\f{m-p+1}{p+1}}A u_0\|_{L_{t}^{q} L_{r}^{\infty} L_{\theta}^{2}\left([1,+\infty)\times [0, +\infty)\times [0, 2\pi]\right)}&\le
 C\|\hat{u_0}(\xi)\|_{L^2(\Bbb R^2)}\\
&\le C\||\xi|^{1-\frac{2}{m+2}(\f{1}{q}+\f{m-p+1}{p+1})}\hat{u_0}(\xi)\|_{L^2(\Bbb R^2)}\\
&=C\|u_0\|_{\dot{H}^{1-\frac{2}{m+2}(\f{1}{q}+\f{m-p+1}{p+1})}\left(\mathbb{R}^{2}\right)}.
\end{aligned}
\end{equation}
Since \eqref{q-14} is invariant under scaling, then one easily obtains  that for any $j \in \mathbb{Z}$,
\begin{equation}\label{q-15}
\|t^{\f{m-p+1}{p+1}}A u_{0j}\|_{L_{t}^{q} L_{r}^{\infty} L_{\theta}^{2}\left([1,+\infty)\times [0, +\infty)\times [0, 2\pi]\right)} \leq C\left\|u_{0j}\right\|_{\dot{H}^{1-\frac{2}{m+2}(\f{1}{q}+\f{m-p+1}{p+1})}\left(\mathbb{R}^{2}\right)}.
\end{equation}
Therefore,
$$
\begin{aligned}
\|t^{\f{m-p+1}{p+1}}A u_0\|_{L_{t}^{q} L_{r}^{\infty} L_{\theta}^{2}\left([1,+\infty)\times [0, +\infty)\times [0, 2\pi]\right)} & =\|\sum_{j \in \mathbb{Z}} t^{\f{m-p+1}{p+1}}A u_{0j}\|_{L_{t}^{q} L_{r}^{\infty} L_{\theta}^{2}\left([1,+\infty)\times [0, +\infty)\times [0, 2\pi]\right)} \\
& \leq \sum_{j \in \mathbb{Z}}\|t^{\f{m-p+1}{p+1}}A u_{0j}\|_{L_{t}^{q} L_{r}^{\infty} L_{\theta}^{2}\left([1,+\infty)\times [0, +\infty)\times [0, 2\pi]\right)}  \\
& \leq C\sum_{j \in \mathbb{Z}}\left\|u_{0j}\right\|_{\dot{H}^{1-\frac{2}{m+2}(\f{1}{q}+\f{m-p+1}{p+1})}\left(\mathbb{R}^{2}\right)} \\
& =C\|u_0\|_{\dot{B}_{2,1}^{1-\frac{2}{m+2}(\f{1}{q}+\f{m-p+1}{p+1})}\left(\mathbb{R}^{2}\right)}.
\end{aligned}
$$
Then by interpolation, \eqref{q-11} can be obtained.

Next we establish \eqref{q-13}. Due to the support condition of $\hat{u_0}$, one then has
\begin{equation}\label{q-16}
\|u_0\|_{L^{2}\left(\mathbb{R}^{2}\right)}^{2} \approx \int_{0}^{\infty} \int_{0}^{2 \pi}|\hat{u_0}(\rho \cos \theta, \rho \sin \theta)|^{2} \mathrm{~d} \theta \mathrm{d} \rho,
\end{equation}
where $\xi=(\rho \cos \theta, \rho \sin \theta)$. By the expansion of Fourier series for $\hat{u_0}$,
there exist coefficients $c_{k}(\rho)(k \in \mathbb{Z})$ vanishing for $\rho \notin\left[\frac{1}{2}, 1\right]$ such that
$\hat{u_0}(\xi)=\sum c_{k}(\rho) e^{i k \theta}$.
Due to $a_2(t,\xi)=a_2(t,|\xi|)$ in \eqref{equ:3.17}, then
\begin{equation}\label{q-17}
t^{\f{m-p+1}{p+1}}(A u_0)(t, \xi)=t^{\f{m-p+1}{p+1}}e^{-i \phi_m(t) \rho} a_2(t, \rho) \sum_{k} c_{k}(\rho) e^{i k \theta}.
\end{equation}
Note that
\begin{equation}\label{q-18}
\|u_0\|_{L^{2}\left(\mathbb{R}^{2}\right)}^{2} \approx \sum_{k} \int_{\mathbb{R}}\left|c_{k}(\rho)\right|^{2} \mathrm{d} \rho \approx \sum_{k} \int_{\mathbb{R}}\left|\hat{c}_{k}(s)\right|^{2} \mathrm{d} s, \end{equation}
where $\hat{c}_{k}(s)$ is the Fourier transformation of $c_{k}(\rho)$ with respect to the variable $\rho$.
Let $\beta(\tau) \in C_{0}^{\infty}(\mathbb{R})$ such that
$$
\beta(\tau)= \begin{cases}1, & \frac{1}{2} \leq \tau \leq 1, \\ 0, & \tau \notin\left[\frac{1}{4}, 2\right].\end{cases}
$$
Set
\begin{equation}
\alpha(t, \rho)=\rho \beta(\rho) a_{2}(t, \rho)t^{\f{m-p+1}{p+1}}.
\end{equation}
Analogously treated as in (3.31) of \cite{HWY2}, we have
$$
\begin{aligned}
t^{\f{m-p+1}{p+1}}(A u_0)(t, r(\cos \theta, \sin \theta))
& = \sum_{k}\big( \int_{-\infty}^{\infty} \int_{0}^{2 \pi} e^{-i k \omega} \hat{\alpha}_{t}(\phi_m(t)-s-r \cos \omega) \hat{c}_{k}(s) \mathrm{d} \omega \mathrm{d} s\big) e^{i k \theta},
\end{aligned}
$$
where $\hat{\alpha}_{t}(\xi)$ stands for the Fourier transformation of $\alpha(t, \rho)$ for $\rho$.
Direct computation yields that for any $r \geq 0$,
\begin{equation}\label{q-19}
\begin{aligned}
& \int_{0}^{2 \pi}|t^{\f{m-p+1}{p+1}}(A u_0)(t, r(\cos \theta, \sin \theta))|^{2} \mathrm{d} \theta \\
& = \sum_{k}\big|\int_{-\infty}^{\infty} \int_{0}^{2 \pi} e^{-i k \omega} \hat{\alpha}_{t}(\phi_m(t)-s-r \cos \omega) \hat{c}_{k}(s) \mathrm{d} \omega \mathrm{d} s\big|^{2}.
\end{aligned}
\end{equation}

By Lemma 3.4 in \cite{HWY2}, \eqref{q-19} and H\"{o}lder's inequality, one has that for any small fixed constant $\dl>0$,
\begin{equation*}
\begin{aligned}
&\|t^{\f{m-p+1}{p+1}}A u_0\|^2_{L_{r}^{\infty} L_{\theta}^{2}\left([0, +\infty)\times [0, 2\pi]\right)}\\
&\quad \leq C \sum_{k} \int_{-\infty}^{\infty}\big|t^{\f{m-p+1}{p+1}}(1+\phi_m(t))^{-\frac{m}{2(m+2)}}(1+|\phi_m(t)-s|)^{-\frac{1}{2}+\delta} \hat{c}_{k}(s)\big|^{2} \mathrm{d} s.
\end{aligned}
\end{equation*}
Note that for any $q \geq 2$, we can choose small $\delta>0$ such that
\begin{equation}\label{q-20}
\sigma=q\big(\f{m+1}{2}-\frac{m+2}{2} \delta-\f{m-p+1}{p+1}\big)>1.
\end{equation}
Then it follows from Minkowski's inequality that for $q \geq 2$,
\begin{equation}\label{q-21}
\begin{aligned}
&\|t^{\f{m-p+1}{p+1}}A u_0\|_{L_{t}^{q} L_{r}^{\infty} L_{\theta}^{2}\left([1,+\infty)\times [0, +\infty)\times [0, 2\pi]\right)}\\
&\leq\big(\sum_{k} \int_{-\infty}^{\infty}\big|\big(\int_{1}^{\infty}\big|t^{\f{m-p+1}{p+1}}(1+\phi_m(t))^{-\frac{m}{2(m+2)}}
(1+|\phi_m(t)-s|)^{-\frac{1}{2}+\delta}\big|^{q} \mathrm{d} t\big)^{\frac{1}{q}} \hat{c}_{k}(s)\big|^{2}\mathrm{d} s\big)^{\frac{1}{2}}.
\end{aligned}
\end{equation}
If $s \leq 0$, then by \eqref{q-20}, we arrive at
\begin{equation}\label{q-22}
\int_{1}^{\infty}\big|t^{\f{m-p+1}{p+1}}(1+\phi_m(t))^{-\frac{m}{2(m+2)}}(1+\phi_m(t))^{-\frac{1}{2}+\delta}\big|^{q} \mathrm{d} t \leq C \int_{1}^{\infty}t^{-\sigma} \mathrm{d} t \leq C.
\end{equation}
If $s>0$, set $s=\phi_m(\bar{s})$, then one has that by \eqref{q-20},
\begin{equation}\label{q-23-L}
\begin{aligned}
&
\int_{1}^{\infty}\big|t^{\f{m-p+1}{p+1}}(1+\phi_m(t))^{-\frac{m}{2(m+2)}}(1+|\phi_m(t)-s|)^{-\frac{1}{2}+\delta}\big|^{q} \mathrm{d} t\\
& \leq C \int_{1}^{\infty}(1+|t-\bar{s}|)^{\frac{(m+2) q}{2}\left(-\frac{1}{2}+\delta\right)}(1+t)^{-\frac{mq}{4}+\f{m-p+1}{p+1}} \mathrm{d} t\\
& \leq C,
\end{aligned}
\end{equation}
Substituting \eqref{q-22} and \eqref{q-23-L} into \eqref{q-21} derives that for $\hat{u_0}(\xi)=0$ if $|\xi| \notin[\frac{1}{2}, 1]$,
$$
\|t^{\f{m-p+1}{p+1}}A u_0\|_{L_{t}^{q} L_{r}^{\infty} L_{\theta}^{2}\left([0, +\infty)\times [0, 2\pi]\right)}
\leq C\big(\sum_{k} \int_{-\infty}^{\infty}\left|\hat{c}_{k}(s)\right|^{2} \mathrm{d} s\big)^{\frac{1}{2}} \leq C\|u_0\|_{L^{2}\left(\mathbb{R}^{2}\right)}.
$$
Therefore, \eqref{q-13} is shown and further \eqref{q-11} holds.
\end{proof}

\section{Time-weighted Strichartz estimates for inhomogenous Tricomi equations}\label{E-6}

Note that by a scaling argument as in \cite{LS} and \cite{Rua4},
the equation $\partial_t^2 \t u-t^{m} \Delta\t  u=t^\alpha|\t u|^p$
with initial data $(\t u, \p_t\t u)(1,x)\in (\dot{H}^s, \dot{H}^{s-\f{2}{m+2}})(\Bbb R^2)$ is ill-posed for \(s<1
-\frac{2}{m+2}\cdot\frac{\alpha+2}{p-1}\).
On the other hand, it follows from a concentration argument that
$\t u$ is ill-posed (see Remark 1.3 of \cite{Rua4}) for
$$s<\frac{3}{4}-\frac{m}{(m+2)(2m+6)}
-\frac{3(\alpha+2)}{(2m+6)(p-1)}.$$
In addition, a direct computation shows that for \(p\geq p_{\conf}(2,m,\al)\),
\begin{equation}\label{YJY-1}
1-\frac{2}{m+2}\cdot\frac{\alpha+2}{p-1}
\geq\frac{3}{4}-\frac{m}{(m+2)(2m+6)}
-\frac{3(\alpha+2)}{(2m+6)(p-1)}.
\end{equation}
Especially, for \(p= p_{\conf}(2,m,\al)\), the equality  in \eqref{YJY-1} holds
and \(1-\frac{2}{m+2}\cdot\frac{\alpha+2}{p-1}=\frac{1}{m+2}\).

For the 2-D linear problem
\begin{equation}\label{equ:3.2}
\left\{ \enspace
\begin{aligned}
&\partial_t^2\t v-t^m\triangle\t  v=0,\\
&\t v(0,x)=f(x),\quad \partial_t\t v(0,x)=0,
\end{aligned}
\right.
\end{equation}
such a time-weighted Strichartz inequality is expected
\begin{equation}\label{YC-4}
\| t^\beta \t v\|_{L^q_tL^r_x([1,\infty)\times\R^{2})}\leq C\,\| f\|_{\dot{H}^s(\R^2)},
\end{equation}
where \(\beta>0\), $q\ge 1$ and $r\ge 1$ are some suitable constants related to $s$ ($0<s<1$).
By a scaling argument, one has from \eqref{YC-4} that
\begin{equation}\label{equ:3.4}
  \beta+\frac{1}{q}+\frac{m+2}{2}\cdot\frac{1}{r}
  =\frac{m+2}{2}\left(1-s\right).
\end{equation}
When $r=q$ in \eqref{equ:3.4}, one can obtain
an endpoint $q=\t q_0$ for $s=\frac{1}{m+2}$ (the value of $s=\frac{1}{m+2}$ comes from the equality
in \eqref{YJY-1} with \(p= p_{\conf}(2,m,\al)\)),
\begin{equation}\label{equ:3.5-0}
\t q_0\equiv \frac{2m+6}{m+1-2\beta}>2.
\end{equation}
It is pointed out that the value of $\t q_0$ will be used repeatedly later.

We now investigate the following linear inhomogeneous problem
\begin{equation}\label{a-5}
\begin{cases}\partial_t^2 w-t^{m} \Delta w=F(t, x), &(t,x)\in [1,\infty)\times \R^2,\\
w(1, x)=0,\quad \p_t w(1, x)=0.
\end{cases}
\end{equation}
\begin{lemma}\label{lem3.4-L}
Let $w$ solve \eqref{a-5}. Then
\begin{equation}\label{equ:3.34}
\|t^\beta w\|_{L^q([1,\infty)\times\R^{2})}\leq C\,\bigl\|t^{-\beta}|D_x|^{\gamma-\frac{1}{m+2}}
F\bigr\|_{L^{\t p_0}([1,\infty)\times\R^{2})},
\end{equation}
where $\gamma=1-\frac{2\beta}{m+2} -\frac{2m+6}{q(m+2)}$, \(0<\beta\leq\frac{m}{4} \), $\t q_0\leq
q<\infty$, $\t p_0=\frac{2m+6}{m+2\beta+5}$ with
$\f{1}{\t p_0}+\f{1}{\t q_0}=1$, and the generic constant $C>0$ only depends on $m$ and $q$.
\end{lemma}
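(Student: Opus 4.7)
The plan is to derive \eqref{equ:3.34} from the explicit Fourier integral representation \eqref{SW-2} of $w$, adapting the arguments used in \cite{HWY4} for $n\geq 3$ to the two-dimensional setting. Writing $w(t,x)=\int_1^t U(t,\tau)F(\tau,\cdot)\,d\tau$, where the propagator $U(t,\tau)$ has amplitude $a(t,\tau,\xi)$ controlled by \eqref{NN}, the proof proceeds in three steps: a dyadic dispersive estimate, a Hardy--Littlewood--Sobolev argument in time, and a frequency summation.

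First, I would decompose $F$ into Littlewood--Paley pieces $F_j$ localized in $|\xi|\sim 2^j$. For each dyadic level, stationary phase analysis of the phase $x\cdot\xi-(\phi_m(t)-\phi_m(\tau))|\xi|$ combined with the symbol bound \eqref{NN} should yield an $L^1_x\to L^\infty_x$ dispersive decay for $U(t,\tau)$ restricted to the shell $|\xi|\sim 2^j$, with decay rate $|\phi_m(t)-\phi_m(\tau)|^{-1/2}$ (the 2-D analogue of the usual $(n-1)/2$ decay). Complex interpolation against the $L^2_x$-Plancherel bound then produces an $L^{\tilde p_0}_x\to L^{\tilde q_0}_x$ inequality whose time kernel is a negative power of $\phi_m(t)-\phi_m(\tau)$.

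Second, after multiplying by $t^\beta$ and performing the substitutions $s=\phi_m(t)$, $\sigma=\phi_m(\tau)$ (so that $dt = C s^{-m/(m+2)}\,ds$), the hypothesis $\gamma=1-\frac{2\beta}{m+2}-\frac{2m+6}{q(m+2)}$ is exactly the scaling that matches the exponent relation required by Hardy--Littlewood--Sobolev between $L^{\tilde p_0}$ and $L^{\tilde q_0}$. This yields the endpoint case $q=\tilde q_0$ (where indeed $\gamma-\frac{1}{m+2}=0$, so no derivative appears); the extension to $\tilde q_0<q<\infty$ then follows by either Sobolev embedding in time or the Christ--Kiselev retarded-kernel lemma applied to a non-endpoint dispersive exponent. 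Reassembling the dyadic pieces via Lemma~\ref{lem3-E} is legitimate because $\tilde p_0\leq 2\leq \tilde q_0\leq q$ under the hypothesis $0<\beta\leq m/4$, and the derivative factor $|D_x|^{\gamma-1/(m+2)}$ on the right-hand side of \eqref{equ:3.34} absorbs the residual frequency powers $2^{j(\gamma-1/(m+2))}$ produced in the first step.

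The hard part will be the dyadic dispersive estimate. Unlike the free propagator $V_1(t,D_x)$ analyzed in Subsection~\ref{Sub-1}, the kernel $U(t,\tau)$ carries a bi-parametric amplitude whose symbol factorizes roughly as $(1+\phi_m(t)|\xi|)^{-m/(2(m+2))}(1+\phi_m(\tau)|\xi|)^{-m/(2(m+2))}|\xi|^{-2/(m+2)}$. One must separately analyze four regimes according to whether each of $\phi_m(t)|\xi|$ and $\phi_m(\tau)|\xi|$ is larger or smaller than $1$, and the two-dimensional stationary phase yields only a $|\phi_m(t)-\phi_m(\tau)|^{-1/2}$ decay, whose interplay with the resulting Bessel-type symbol factors must be tracked carefully. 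This is precisely where the 2-D analysis diverges from the $n\geq 3$ treatment of \cite{HWY4}, and where the restriction $\beta\leq m/4$ will be seen to be sharp in matching the HLS exponents.
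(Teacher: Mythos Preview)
Your plan for the endpoint $q=\tilde q_0$ is essentially the paper's: Littlewood--Paley localize, prove a dyadic $L^{\tilde p_0}_x\to L^{\tilde q_0}_x$ dispersive bound with time kernel $|t-\tau|^{-(m+1-2\beta)/(m+3)}$, and close by Hardy--Littlewood--Sobolev in $t$. Two remarks on that endpoint: the paper does not pass to $s=\phi_m(t)$ but works directly in $t$; and the dispersive step is split not according to the four symbol regimes you describe but according to whether $0<\beta\le\frac{m}{m+6}$ or $\frac{m}{m+6}<\beta\le\frac{m}{4}$, with a different redistribution of the weight $t^\beta$ against the amplitude decay in each sub-case. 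This split is where the restriction $\beta\le m/4$ actually bites.

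The genuine gap is the passage to $q>\tilde q_0$. Neither of your proposed mechanisms does the job: Sobolev embedding in time would need time derivatives of $w$ that you do not have, and Christ--Kiselev only removes the retardation once an $L^{\tilde p_0}\to L^q$ bound is already in hand---it does not raise the output exponent. Moreover, any such black-box extension cannot manufacture the $q$-dependent derivative shift $|D_x|^{\gamma-1/(m+2)}$ on the right-hand side. The paper instead introduces the one-parameter family $A^\nu_j$ (your operator with an extra $|\xi|^{-\nu}$ in the amplitude) and proves a \emph{second} endpoint $q=\infty$ with $\nu_1=1-\frac{2\beta+1}{m+2}$: it shows the pointwise kernel bound $\sup_{t,x}\int_{[1,\infty)\times\R^2}|K^{\nu_1}_j(t,x;\tau,y)|^{\tilde q_0}\,d\tau\,dy<\infty$ (via the explicit oscillatory-integral estimate \eqref{equ:3.43} with a tuned decay exponent $N$) and then applies H\"older. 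Complex interpolation between $(q,\nu)=(\tilde q_0,0)$ and $(q,\nu)=(\infty,\nu_1)$ then gives the full range, with the interpolation parameter $\nu$ landing exactly at $\gamma-\frac{1}{m+2}$. In other words the derivative factor is the interpolation coordinate, not a residual absorbed after summing in $j$.
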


\begin{proof}
It follows from \eqref{SW-2} that
\begin{equation}\label{equ:3.40-L}
t^\beta w(t,x)=(AF)(t,x)\equiv
\int_1^t\int_{\R^2}e^{i\left(x\cdot\xi-(\phi_m(t)-\phi_m(\tau))|\xi|\right)}
a(t,\tau,\xi)\hat{F}(\tau,\xi)\,\mathrm{d}\xi \mathrm{d}\tau,
\end{equation}
where $a(t,\tau,\xi)$ satisfies
\begin{equation}\label{equ:3.40--L}
\bigl| \partial_\xi^\kappa a(t,\tau,\xi)\bigr|\leq
Ct^\beta \left(1+\phi_m(t)|\xi|\right)^{-\f{m}{2(m+2)}}
\left(1+\phi_m(\tau)|\xi|\right)^{-\f{m}{2(m+2)}}|\xi|^{-\frac{2}{m+2}-|\kappa|}
\leq|\xi|^{-\frac{4\beta+2}{m+2}-|\kappa|}\tau^{-\beta}.
\end{equation}

To treat $(AF)(t,x)$, the following more general operator is introduced as in (3.35) of \cite{HWY1}
\begin{equation}\label{equ:3.37}
(A^\nu F)(t,x)=\int_1^t\int_{\R^2}e^{i\left(x\cdot\xi-(\phi_m(t)-\phi_m(\tau))|\xi|\right)}
 a(t,\tau,\xi)\hat{F}(\tau,\xi)\frac{d\xi}{|\xi|^\nu}\,\mathrm{d}\tau,
\end{equation}
where the parameter $\nu\in(0, 1)$.

Set
\begin{equation}\label{equ:3.38}
  A^\nu_j F(t,x)=\int_1^t\int_{\R^2}e^{i\left(x\cdot\xi-(\phi_m(t)-\phi_m(\tau))|\xi|\right)}
  \vp(\frac{|\xi|}{2^j})a(t,\tau,\xi)
\hat{F}(\tau,\xi)\frac{d\xi}{|\xi|^\nu} \, \mathrm{d}\tau,
\end{equation}
where the function $\vp$ is given in \eqref{chi}.
We next show the following inequality for
$\gamma=1-\frac{2\beta}{m+2}-\frac{2m+6}{q(m+2)}$ and $\t q_0\le q<\infty$,
\begin{equation*}
\|t^\beta w\|_{L^q([1,\infty)\times \R^{2})}\leq C\bigl\|t^{-\beta}|D_x|^{\gamma-\frac{1}{m+2}}
F\bigr\|_{L^{\t p_0}([1,\infty)\times \R^{2})}.
\end{equation*}
That is,
\begin{equation*}
\|t^\beta|D_x|^{-\gamma+\frac{1}{m+2}}w\|_{L^q([1,\infty)\times \R^{2})}\leq
C\| t^{-\beta}F\|_{L^{\t p_0}([1,\infty)\times \R^{2})}.
\end{equation*}
By the definition of $A^\nu$ in \eqref{equ:3.37} with
$\nu=\gamma-\frac{1}{m+2}$, once
\begin{equation}\label{equ:3.39}
\| A^\nu F\|_{L^q([1,\infty)\times \R^{2})}\le C\| t^{-\beta}F\|_{L^{\t p_0}([1,\infty)\times \R^{2})}
\end{equation}
is established, then \eqref{equ:3.34} holds true.

To derive \eqref{equ:3.39}, in terms of Lemma \ref{lem3-E}, it requires to show
\begin{equation}\label{equ:3.40-M}
\| A^\nu_j F\|_{L^q([1,\infty)\times \R^{2})}\le C\| t^{-\beta}F\|_{L^{\t p_0}([1,\infty)\times \R^{2})}.
\end{equation}
By applying the interpolation argument, it only needs to prove \eqref{equ:3.40-M} for
the two endpoint cases of $q=\t q_0$ and $q=\infty$. Meanwhile, the corresponding indices $\nu$ are denoted by $\nu_0$ and $\nu_1$,
respectively.
It follows from direct computation
that $\nu_0=1-\frac{2\beta}{m+2}-\frac{2m+6}{\t q_0(m+2)}
-\frac{1}{m+2}=0$ and $\nu_1=1-\frac{2\beta+1}{m+2}$.
For treating $A^{\nu_0}_j=A^{0}_j$, set
\[
T_j^0F(t,\tau,x)=\int_{\R^2}
e^{i\left(x\cdot\xi-(\phi_m(t)-\phi_m(\tau))|\xi|\right)}
\vp(\frac{|\xi|}{2^j})a(t,\tau,\xi)\hat{F}(\tau,\xi)\, \mathrm{d}\xi.
\]
Next we show
\begin{equation}\label{d-2}
\|T_j^0F(t,\tau,x)\|_{L^{\t q_{0}}\left( \mathbb{R}^{2}\right)}
\leq C|t-\tau|^{-\frac{m+1-2\beta}{m+3}}\| \tau^{-\beta }F\|_{L^{\t p_0}(\R^2)}.
\end{equation}
For \( 0<\beta\leq\frac{m}{m+6}\), without loss of generality, $t\geq \tau$ is assumed. Note that
\begin{equation*}
\begin{aligned}
&\big|t^{\,\frac{(\beta+1)m}{2m+6}
-\beta}\vp(\frac{|\xi|}{2^j})a(t,\tau,\xi)\big| \\
&\leq C t^{\frac{(\beta+1)m}{2m+6}-\beta}t^\beta
\left(1+\phi_m(t)|\xi|\right)^{-\f{m}{2(m+2)}}
\left(1+\phi_m(\tau)|\xi|\right)^{-\f{m}{2(m+2)}}|\xi|^{-\f{2}{m+2}}  \\
&\leq C |\xi|^{-\f{2}{m+2}-\frac{m(\beta+1)}{(m+2)((m+3)}-\frac{2\beta}{m+2}}\tau^{-\beta}.
\end{aligned}
\end{equation*}
Denote
\begin{equation*}
b(t,\tau,\xi)=2^{j(\f{2}{m+2}+\frac{m(\beta+1)}{(m+2)(m+3)}+\frac{2\beta}{m+2})}t^{\,\frac{(\beta+1)m}{2m+6}-\beta}
\vp(\frac{|\xi|}{2^j})a(t,\tau,\xi).
\end{equation*}
Then
$$
|b(t,\tau,\xi)|\leq C \tau^{-\beta}.
$$
Define
\[
T_{j1}^0F(t,\tau,x)=\int_{\R^2}\int_{\R^2}
e^{i\left((x-y)\cdot\xi-(\phi_m(t)-\phi_m(\tau))|\xi|\right)}t^{-\frac{(\beta+1)m}{2m+6}
+\beta}
b(t,\tau,\xi)F(\tau,y)\, \mathrm{d}\xi \mathrm{d}y.
\]
By the assumptions of $t\geq \tau$ and $0<\beta\leq\frac{m}{m+6}$, one has
$$
\| T_{j1}^0F(t,\tau,x)\|_{L^2(\R^2)}\leq C\left|t-\tau\right|^{\beta-\frac{(\beta+1)m}{2m+6}}
\|\tau^{-\beta}F\|_{L^2(\R^2)}
$$
and
$$
\begin{aligned}
\| T_{j1}^0F(t,\tau,x)\|_{L^\infty(\R^2)}&\leq
C 2^{\frac{3}{2}j}t^{\, \beta-\frac{(\beta+1)m}{2m+6}}
\left|\phi_m(t)-\phi_m(\tau)\right|^{-\frac{1}{2}}\| \tau^{-\beta }F\|_{L^1(\R^2)} \\
&\leq C2^{\frac{3}{2}j}\left|t-\tau\right|^{\beta-\frac{(\beta+1)m}{2m+6}-\frac{m+2}{4}}
\| \tau^{-\beta }F\|_{L^1(\R^2)}.
\end{aligned}
$$
By interpolation, we have
$$
  \| T_{j1}^0F\|_{L^{\t q_0}(\R^2)}\leq C 2^{\frac{3(\beta+1)}{m+3}j}
  \left|t-\tau\right|^{-\frac{m+1-2\beta}{m+3}}\| \tau^{-\beta }F\|_{L^{\t p_0}(\R^2)}.
$$
Thus
\begin{equation}\label{d-4}
\begin{aligned} \|T_{j}^0F\|_{L^{\t q_0}(\R^2)}
&\leq C 2^{-(\f{2}{m+2}+\frac{m(\beta+1)}{(m+2)(m+3)}+\frac{2\beta}{m+2}-\frac{3(\beta+1)}{m+3})j}
  \left|t-\tau\right|^{-\frac{m+1-2\beta}{m+3}}\| \tau^{-\beta }F\|_{L^{\t p_0}(\R^2)}\\
&= C\left|t-\tau\right|^{-\frac{m+1-2\beta}{m+3}}\| \tau^{-\beta }F\|_{L^{\t p_0}(\R^2)}.
\end{aligned}
\end{equation}
We consider  the remaining case of \(\frac{m}{m+6}<\beta\leq\frac{m}{4}\). Let
$b(t,\tau,\xi)=|\xi|^{\frac{4\beta+2}{m+2}}\vp(\frac{|\xi|}{2^j})a(t,\tau,\xi)$ and
\[
T_{j2}^0F(t,\tau,x)=\int_{\R^2}\int_{\R^2}
e^{i\left((x-y)\cdot\xi-(\phi_m(t)-\phi_m(\tau))|\xi|\right)}
b(t,\tau,\xi)F(\tau,y)\, \mathrm{d}\xi \mathrm{d}y.
\]
Then
\begin{equation}\label{d-3}
\| T_{j2}^0F(t,\tau,x)\|_{L^2(\R^2)}\leq C
\|\tau^{-\beta}F\|_{L^2(\R^2)},
\end{equation}
while for the $L^\infty$ estimate, it follows from the  stationary phase method and \(\beta>\frac{m}{m+6}\) that
$$
\begin{aligned}
\| T_{j2}^0F(t,\tau,x)\|_{L^\infty(\R^2)}
&\leq C2^{2j}
(1+2^j \left|\phi_m(t)-\phi_m(\tau)\right|)^{-\frac{m+1-2\beta}{(\beta+1)(m+2) }}\| \tau^{-\beta}F\|_{L^1(\R^2)} \\
&\leq C 2^{2j-\frac{m+1-2\beta}{(\beta+1)(m+2)}j}
\left|t-\tau\right|^{ -\frac{m+1-2\beta}{2(\beta+1) }}\| \tau^{-\beta}F\|_{L^1(\R^2)}.
\end{aligned}
$$
This, together with \eqref{d-3}, yields
\begin{equation}
  \| T_{j2}^0F(t,\tau,x)\|_{L^{\t q_0}(\R^2)}\leq C 2^{\frac{4\beta+2 }{m+2}j}
  \left|t-\tau\right|^{-\frac{m+1-2\beta}{m+3}}\| \tau^{-\beta}F\|_{L^{\t p_0}(\R^2)}.
\end{equation}
Then
\begin{equation}\label{d-5}
 \|T_{j}^0F\|_{L^{\t q_0}(\R^2)}=2^{-\frac{4\beta+2 }{m+2}j}\| T_{j2}^0F\|_{L^{\t q_0}(\R^2)}\leq C\left|t-\tau\right|^{-\frac{m+1-2\beta}{m+3}}\| \tau^{-\beta }F\|_{L^{\t p_0}(\R^2)}.
\end{equation}
Thus \eqref{d-2} follows from \eqref{d-4} and \eqref{d-5} immediately.

Note that $\ds A^{0}_j F(t,x)=\int_1^tT_j^0F(t,\tau,x)\,\mathrm{d}\tau$. In addition,
by \eqref{d-2} and the Hardy-Littlewood-Sobolev inequality, we
may conclude
\[
\|\int_{1}^{\infty}\|
T_j^0F(t,\tau,x)\|_{L^{\t q_0}_x}\,\mathrm{d}\tau\|_{L^{\t q_0}_t[1,\infty)}\leq C\|
t^{-\beta}F\|_{L^{\t p_0}([1,\infty)\times\R^2)}.
\]
Let
\begin{equation*}
K(t,\tau)=
\begin{cases}
|t-\tau|^{-\frac{m+1-2\beta}{m+3}}, &\tau\geq 1, \\
0, &\tau<1
\end{cases}
\end{equation*}
and set $q=\t q_0$ in \cite[Lemma~3.8]{LS},
then \eqref{equ:3.40-M} is obtained for $q=\t q_0$.

Next we establish \eqref{equ:3.40-M} for $q=\infty$.  In this case, the
kernel of $A^{\nu_1}_j$ admits such a form
\[
K^{\nu_1}_j(t,x;\tau,y)=\int_{\R^2}\vp(\frac{|\xi|}{2^j})
e^{i\left((x-y)\cdot\xi-(\phi_m(t)-\phi_m(\tau))|\xi|\right)}a(t,\tau,\xi)\tau^{\beta} \,\frac{\mathrm{d}\xi}{|\xi|^{\nu_1}}.
\]
We now assert
\begin{equation}\label{equ:3.42}
\sup\limits_{t,x}\int_{[1,\infty)\times\R^2}|K^{\nu_1}_j(t,x;\tau,y)|^{\t q_0}\,\mathrm{d}\tau \mathrm{d}y<\infty.
\end{equation}

Indeed, by (3.29) of \cite{LS}, one can obtain
\begin{multline}\label{equ:3.43}
\big|K^{\nu_1}_j(t,x;\tau,y)\big| \leq \,
C_{N,\nu_1}\lambda^{\frac{3}{2}}
t^\beta \left(1+\phi_m(t)|\xi|\right)^{-\f{m}{2(m+2)}}
\tau^{\beta}\left(1+\phi_m(\tau)|\xi|\right)^{-\f{m}{2(m+2)}} \\
\times\left(|\phi_m(t)-\phi_m(\tau)|+\lambda^{-1}\right)^{-\frac{1}{2}}
\left(1+\lambda\big||x-y|-|\phi_m(t)-\phi_m(\tau)|\big|\right)^{-N}\lambda^{-\nu_1}|\xi|^{-\frac{2}{m+2}} \\
\leq C_{N,\nu_1}\lambda^{\frac{3}{2}-{\bar\nu}_1}
\left(|\phi_m(t)-\phi_m(\tau)|+\lambda^{-1}\right)^{-\frac{1}{2}}
\left(1+\lambda\big||x-y|-|\phi_m(t)-\phi_m(\tau)|\big|\right)^{-N},
\end{multline}
where $\lambda=2^j$, $N\in \R^+$, and
${\bar\nu}_1=\frac{2}{m+2}+\nu_1+\frac{4\beta}{m+2}
=1+\frac{1+2\beta}{m+2}.$

Without loss of generality, $x=0$ is assumed in \eqref{equ:3.42}. It follows from the choice of $N \t q_0=3$ and direct
computation that
\begin{multline*}
\int_{[1,\infty)\times\R^2}|K^{\nu_1}_j(t,0;\tau,y)|^{\t q_0}\,\mathrm{d}\tau \mathrm{d}y \\
\begin{aligned}
&\leq\int_{1}^{\infty}\int_{\R^2}\lambda^{(\frac{3}{2}-{\bar\nu}_1)\cdot \t q_0}
  \left(|\phi_m(t)-\phi_m(\tau)|+\lambda^{-1}\right)^{-\frac{1}{2}\cdot \t q_0}
  \left(1+\lambda\bigl||y|-|\phi_m(t)-\phi_m(\tau)|\bigr|\right)^{-N\t q_0}\,\mathrm{d}\tau\mathrm{d}y \\
&\leq C\int_{1}^{\infty}\lambda^{\frac{m-4\beta}{2(m+2)}\cdot \t q_0}
  \left(|\phi_m(t)-\phi_m(\tau)|+\lambda^{-1}\right)^{-\frac{\t q_0}{2}}\lambda^{-1}
  \left(|\phi_m(t)-\phi_m(\tau)|+\lambda^{-1}\right)\,\mathrm{d}\tau \\
&\leq C \int_{1}^{\infty}\lambda^{\frac{m-4\beta}{2(m+2)}\cdot \t q_0-1}
(|t-\tau|+\lambda^{-\frac{2}{m+2}})^{\frac{m+2}{2}(1-\frac{\t q_0}{2})}\,\mathrm{d}\tau \\
&\leq C\lambda^{\frac{m-4\beta}{2(m+2)}\cdot \t q_0-1-\frac{2}{m+2} -(1-\frac{\t q_0}{2})} \\
&= C.
\end{aligned}
\end{multline*}
 Therefore, the assertion \eqref{equ:3.42} is proved. By \eqref{equ:3.42} and
H\"{o}lder's inequality, it is easy to know that \eqref{equ:3.40-M} holds for $q=\infty$.
Therefore, by interpolation, \eqref{equ:3.40-M} and consequently \eqref{equ:3.34} are shown.
\end{proof}

Based on Lemma \ref{lem3.4-L}, it follows from the expression of $\t q_0$  with $\beta=\f{\al}{\t q_0}\leq \f{m}{4}$ in \eqref{equ:3.5-0}
that $\t q_0=\frac{2(m+3+\alpha)}{m+1}$ and $\alpha \leq m\cdot\f{m+3}{m+2}$ hold true.
Denote $\t q_0$ by $q_0$, one naturally has
\begin{lemma}\label{lem4}
For $q_{0}=\frac{2(m+3+\alpha)}{m+1}$ with  $0<\alpha \leq m\cdot\f{m+3}{m+2}$, if $F(t, x) \equiv 0$ when $|x|>\phi_{m}(t)-1$, then the solution $w$ of problem \eqref{a-5} satisfies
\begin{equation}\label{a-11}
\|t^{\frac{\alpha}{q_{0}}} w\|_{L^{q_{0}}\left(\left[1, \infty\right) \times \mathbb{R}^{2}\right)} \leq C\|t^{-\frac{\alpha}{q_{0}}} F\|_{L^{p_0}\left(\left[1, \infty\right) \times \mathbb{R}^{2}\right)},
\end{equation}
where $p_0=\frac{q_{0}}{q_{0}-1}$.
\end{lemma}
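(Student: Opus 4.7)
The plan is to deduce Lemma~\ref{lem4} directly from Lemma~\ref{lem3.4-L} by the specific choice $\beta=\alpha/q_{0}$ together with $q=q_{0}$. The whole proof is essentially an algebraic verification that, under this choice, the parameters in \eqref{equ:3.34} collapse to exactly the exponents in \eqref{a-11}; the hard analytic work (dyadic decomposition of $A^{\nu}$, the $L^{2}$--$L^{\infty}$ bounds on the frequency-localized pieces $T_{j1}^{0}F$, $T_{j2}^{0}F$, the Hardy--Littlewood--Sobolev step, and the interpolation between the endpoints $q=\tilde q_{0}$ and $q=\infty$) has already been absorbed into Lemma~\ref{lem3.4-L}.

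First I would check that $\beta=\alpha/q_{0}$ lies in the admissible window $0<\beta\le m/4$. Positivity is immediate from $\alpha>0$; the upper bound, after plugging $q_{0}=2(m+3+\alpha)/(m+1)$, reduces to $2\alpha(m+1)\le m(m+3+\alpha)$, i.e.\ $\alpha(m+2)\le m(m+3)$, which is precisely the hypothesis $\alpha\le m(m+3)/(m+2)$. Next, writing $\tilde q_{0}=(2m+6)/(m+1-2\beta)$ from \eqref{equ:3.5-0} and substituting $\beta=\alpha/q_{0}$, a direct computation gives $\tilde q_{0}=q_{0}$, so both $\tilde q_{0}\le q<\infty$ and $q=\tilde q_{0}$ are met.

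With these identifications in place, I evaluate $\gamma=1-2\beta/(m+2)-(2m+6)/(q(m+2))$ at $q=q_{0}=\tilde q_{0}$: since $(2m+6)/(q_{0}(m+2))=(m+1-2\beta)/(m+2)$, one obtains $\gamma=1-(m+1)/(m+2)=1/(m+2)$, hence $\gamma-1/(m+2)=0$ and the Fourier multiplier $|D_{x}|^{\gamma-1/(m+2)}$ on the right-hand side of \eqref{equ:3.34} reduces to the identity. Moreover, since $1/\tilde p_{0}+1/\tilde q_{0}=1$ and $1/p_{0}+1/q_{0}=1$, the equality $\tilde q_{0}=q_{0}$ forces $\tilde p_{0}=p_{0}$. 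Substituting all of this into \eqref{equ:3.34} yields exactly \eqref{a-11}.

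There is essentially no obstacle to overcome beyond the parameter bookkeeping described above. The support hypothesis $F(t,x)\equiv 0$ for $|x|>\phi_{m}(t)-1$ is not used as a quantitative input in the proof itself; it plays the role of a standing compatibility condition guaranteeing, via finite propagation for $\partial_{t}^{2}-t^{m}\Delta$, that the Fourier-integral representation \eqref{SW-2} used to derive \eqref{equ:3.34} is well defined on compactly supported data and that $w$ has compact spatial support, so no boundary-at-infinity issue arises when applying Lemma~\ref{lem3.4-L}.
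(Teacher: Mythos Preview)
Your proposal is correct and matches the paper's own approach essentially verbatim: the paper derives Lemma~\ref{lem4} as an immediate corollary of Lemma~\ref{lem3.4-L} by setting $\beta=\alpha/\tilde q_{0}$, checking that this forces $\tilde q_{0}=q_{0}=2(m+3+\alpha)/(m+1)$ and $\alpha\le m(m+3)/(m+2)$, and then reading off \eqref{a-11}. Your verification that $\gamma-\tfrac{1}{m+2}=0$ (so the multiplier $|D_x|^{\gamma-1/(m+2)}$ drops out) is in fact more explicit than the paper, which simply asserts that the identification is natural.
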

\begin{remark}\label{JY-8-5}
{\it It is pointed out that the proof of Theorem \ref{TH-1} (i) for $\mu\in(0, 1)$ will require the global existence
 result of \eqref{YH-4} with $\al=m< m\cdot\f{m+3}{m+2}$. Hence Lemma \ref{lem4} is  crucial for establishing \eqref{equ:3.3-L}.}
\end{remark}

In Lemmas ~\ref{lem3.4-L}-\ref{lem4}, we have established the related spacetime-weighted Strichartz inequalities for
the case of $0<\alpha\leq m\cdot\f{m+3}{m+2}$, next we  investigate the case of $-1<\alpha<0$.
In this situation, one cannot use some crucial estimates such as \eqref{equ:3.40--L} in Lemma~\ref{lem3.4-L}.
To overcome this essential difficulty, we will derive some weak Strichartz estimate in finite time
instead of the ones in infinite time, which is enough for our applications later.

\begin{lemma}\label{lem5}
For $q_{0}=\frac{2(m+3+\alpha)}{m+1}$ with  $-1<\alpha<0$, if $F(t, x) \equiv 0$ when $|x|>\phi_{m}(t)-1$, then for any fixed large $\bar{T}>0$, the solution $w$ of problem \eqref{a-5} satisfies
\begin{equation}
\|t^{\frac{\alpha}{q_{0}}} w\|_{L^{q_{0}}\left([1, \bar{T}] \times \mathbb{R}^{2}\right)}
\leq C\|t^{-\frac{\alpha}{q_{0}}} F\|_{L^{p_0}\left([1, \bar{T}] \times \mathbb{R}^{2}\right)},
\end{equation}
where $p_0=\frac{q_{0}}{q_{0}-1}$ and the constant $C>0$ depends on $q_{0}$ and $\bar{T}$.
\end{lemma}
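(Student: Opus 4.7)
The strategy is to reduce Lemma~\ref{lem5} (with $-1<\alpha<0$) to the already established Lemma~\ref{lem4} (where $\alpha>0$), by exploiting the finite time horizon $\bar T$ together with the compact spatial support of both $F$ and $w$. The key observation is that on a bounded spacetime set all time-weights of the form $t^{\pm\alpha/q_0}$ and $t^{\pm\alpha_*/q_*}$ lie between positive constants depending only on $\bar T$, while larger-exponent $L^q$ spaces embed into smaller-exponent ones on a finite measure set.

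First I would fix any auxiliary $\alpha_*\in(0,\,m\cdot\f{m+3}{m+2}]$ and introduce $q_*:=\f{2(m+3+\alpha_*)}{m+1}$, $p_*:=\f{q_*}{q_*-1}$. Since $\alpha<0<\alpha_*$, one has $q_0<q_*$ and hence $p_*<p_0$ (as the map $q\mapsto q/(q-1)$ is decreasing). The support hypothesis on $F$ is unchanged, so Lemma~\ref{lem4} applied with $\alpha$ replaced by $\alpha_*$ yields
\begin{equation*}
\|t^{\alpha_*/q_*}w\|_{L^{q_*}([1,\bar T]\times\R^2)}\le C\,\|t^{-\alpha_*/q_*}F\|_{L^{p_*}([1,\bar T]\times\R^2)}.
\end{equation*}

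Next I would verify the compact spatial support of $w$. By the standard finite propagation property of the generalized Tricomi operator (equivalently, the physical-space kernel associated with the Fourier integral \eqref{SW-2} is supported on $\{|x-y|\le\phi_m(t)-\phi_m(\tau)\}$), the hypothesis $F(\tau,y)\equiv 0$ for $|y|>\phi_m(\tau)-1$ propagates to $w(t,x)\equiv 0$ for $|x|>\phi_m(t)-1$. Consequently both $F$ and $w$ are supported in the bounded cylinder $\Omega_{\bar T}:=\{(t,x)\in[1,\bar T]\times\R^2:|x|\le\phi_m(\bar T)\}$, whose Lebesgue measure is finite.

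Finally, on $\Omega_{\bar T}$ each of the weights $t^{\alpha/q_0},\,t^{\pm\alpha_*/q_*},\,t^{-\alpha/q_0}$ lies between positive constants depending only on $\bar T$; combined with the H\"older inclusions $L^{q_*}(\Omega_{\bar T})\hookrightarrow L^{q_0}(\Omega_{\bar T})$ (since $q_0\le q_*$) and $L^{p_0}(\Omega_{\bar T})\hookrightarrow L^{p_*}(\Omega_{\bar T})$ (since $p_*\le p_0$), this produces
\begin{align*}
\|t^{\alpha/q_0}w\|_{L^{q_0}([1,\bar T]\times\R^2)}&\le C(\bar T)\,\|t^{\alpha_*/q_*}w\|_{L^{q_*}([1,\bar T]\times\R^2)},\\
\|t^{-\alpha_*/q_*}F\|_{L^{p_*}([1,\bar T]\times\R^2)}&\le C(\bar T)\,\|t^{-\alpha/q_0}F\|_{L^{p_0}([1,\bar T]\times\R^2)}.
\end{align*}
Chaining these two inequalities with the Lemma~\ref{lem4} estimate above gives the required bound with constant depending only on $q_0$ and $\bar T$. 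The only step requiring nontrivial input is the compact spatial support of $w$, which is classical finite propagation for Tricomi-type operators; everything else is a routine finite-measure H\"older interpolation, and no serious obstacle arises.
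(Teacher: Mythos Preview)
Your argument is correct and substantially simpler than the paper's. You exploit the fact that the equation defining $w$ does not involve $\alpha$ at all: the parameter only enters through the weights and the Lebesgue exponents in the estimate. Hence the same $w$ satisfies Lemma~\ref{lem4} with any auxiliary positive $\alpha_*$, and the compact support of $w$ (via finite propagation) plus H\"older inclusions on the bounded set $\Omega_{\bar T}$ let you trade the exponents $q_*,p_*$ for $q_0,p_0$ at the cost of a $\bar T$-dependent constant. The only point you should make explicit is that $w|_{[1,\bar T]}$ depends only on $F|_{[1,\bar T]}$, so one may truncate $F$ in time before invoking Lemma~\ref{lem4}; this justifies having $[1,\bar T]$ on both sides rather than $[1,\infty)$ on the right.

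By contrast, the paper proves Lemma~\ref{lem5} from scratch: it Littlewood--Paley decomposes the Fourier integral operator, derives $L^2\!\to\!L^2$ and $L^1\!\to\!L^\infty$ bounds for the frequency-localized pieces (separately for $\lambda<1$ and $\lambda\ge1$), interpolates to obtain a dispersive-type estimate $\|t^{\alpha/q_0}T_{t,s}F\|_{L^{q_0}}\le C(\bar T)|t-s|^{-2/q_0}\|s^{-\alpha/q_0}F\|_{L^{p_0}}$, and closes with Hardy--Littlewood--Sobolev as in Lemma~\ref{lem3.4-L}. Your route is shorter and conceptually cleaner, reducing the negative-$\alpha$ case to the positive-$\alpha$ case by a soft finite-measure argument; the paper's route is self-contained and yields a pointwise-in-time kernel bound that could be useful if one later needed more precise dependence on $\bar T$, but for the stated lemma your approach is fully adequate.
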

\begin{proof}
It follows from \eqref{SW-2} that
\begin{equation}\label{aa-28}
w(t, x)=AF(t,x)=\int_{\mathbb{R}^{2}\times [1, t)}\int_{\mathbb{R}^{2}} e^{i\left((x-y) \cdot \xi-\left(\phi_{m}(t)-\phi_{m}(s)\right)|\xi|\right)} a(t, s, \xi) F(s, y) \mathrm{d} \xi \mathrm{d} s \mathrm{d} y,
\end{equation}
where
\begin{equation}\label{a-28}
\left|\partial_{\xi}^{\kappa} a(t, s, \xi)\right| \leq\left(1+\phi_{m}(t)|\xi|\right)^{-\frac{m}{2(m+2)}}\left(1+\phi_{m}(s)|\xi|\right)^{-\frac{m}{2(m+2)}}|\xi|^{-\frac{2}{m+2}-|\kappa|} \leq|\xi|^{-\frac{2}{m+2}-|\kappa|} .
\end{equation}
Set $a_{\lambda}(t, s, \xi)=\vp(|\xi| / \lambda) a(t, s, \xi)$ for $\lambda>0$ and $\vp$ is defined in \eqref{chi}. Define
$$
A_{\lambda} F(t, x)=\int_{\left[1, t\right) \times \mathbb{R}^{2}} \int_{\mathbb{R}^{2}} e^{i\left((x-y) \cdot \xi-\left(\phi_{m}(t)-\phi_{m}(s)\right)|\xi|\right)}a_{\lambda}(t, s, \xi) F(s, y) \mathrm{d} \xi \mathrm{d} y \mathrm{d} s,
$$
where $a_{\lambda}(t, s, \xi)$ satisfies
$$
\left|\partial_{\xi}^{\kappa} a_{\lambda}(t, s, \xi)\right|\leq|\lambda|^{-\frac{2}{m+2}-|\kappa|}.
$$
Let
$$
T_{t, s} F(x)=\int_{\mathbb{R}^{2}} \int_{\mathbb{R}^{2}} e^{i\left((x-y) \cdot \xi-\left(\phi_{m}(t)-\phi_{m}(s)\right)|\xi|\right)} a_{\lambda}(t, s, \xi) F(s,y) \mathrm{d} \xi \mathrm{d}y.
$$
Then
\begin{equation}\label{a-29}
\|t^{\frac{\alpha}{q_{0}}} T_{t, s} F(\cdot)\|_{L^{2}\left(\mathbb{R}^{2}\right)} \leq C t^{\frac{\alpha}{q_{0}}} s^{\frac{\alpha}{q_{0}}} \lambda^{-\frac{2}{m+2}}\|s^{-\frac{\alpha}{q_{0}}} F(s,\cdot)\|_{L^{2}\left(\mathbb{R}^{2}\right)} \leq C \lambda^{-\frac{2}{m+2}}\|s^{-\frac{\alpha}{q_{0}}} F(s,\cdot)\|_{L^{2}\left(\mathbb{R}^{2}\right)}.
\end{equation}
In addition, when $\lambda<1$, one has
\begin{equation}\label{a-30}
\|t^{\frac{\alpha}{q_{0}}} T_{t, s} F(\cdot)\|_{L^{\infty}\left(\mathbb{R}^{2}\right)} \leq C \lambda^{2-\frac{2}{m+2}}t^{\frac{\alpha}{q_{0}}} s^{\frac{\alpha}{q_{0}}}\|s^{-\frac{\alpha}{q_{0}}} F(s,\cdot)t\|_{L^{1}\left(\mathbb{R}^{2}\right)} \leq C \lambda^{2-\frac{2}{m+2}}\|s^{-\frac{\alpha}{q_{0}}} F(s,\cdot)\|_{L^{1}\left(\mathbb{R}^{2}\right)}.
\end{equation}
Interpolating \eqref{a-29} with \eqref{a-30} yields that for $1 \leq s \leq t \leq \bar{T}$ and $\lambda<1$,
\begin{equation}\label{a-31}
\begin{aligned}
\|t^{\frac{\alpha}{q_{0}}} T_{t, s} F(\cdot)\|_{L^{q_{0}}\left(\mathbb{R}^{2}\right)} &\leq C \lambda^{\frac{4 (\al+q_0)}{(m+3)q_0}-\frac{2}{m+2}}\|s^{-\frac{\alpha}{q_{0}}} F(s,\cdot)\|_{L^{p_{0}}\left(\mathbb{R}^{2}\right)}\\
&  \leq C(\bar{T})|t-s|^{-\frac{(m+1)q_0-2 \al}{(m+3)q_0}}\|s^{-\frac{\alpha}{q_{0}}} F(s,\cdot)\|_{L^{p_{0}}\left(\mathbb{R}^{2}\right)},
\end{aligned}
\end{equation}
where we have used the facts of $\frac{4 (\al+q_0)}{(m+3)q_0}-\frac{2}{m+2}>0$ and $-\frac{(m+1)q_0-2 \al}{(m+3)q_0}<0$ with $-1<\al<0$ in the last inequality. On the other hand, it follows from stationary phase method that for $\lambda \geq 1$,
\begin{equation}\label{a-33}
\begin{aligned}
\|t^{\frac{\alpha}{q_{0}}} T_{t, s} F(\cdot)\|_{L^{\infty}\left(\mathbb{R}^{2}\right)} & \leq C \lambda^{2}\left(1+\lambda\left|\phi_{m}(t)-\phi_{m}(s)\right|\right)^{-\frac{1}{2}} t^{\frac{\alpha}{q_{0}}-\frac{m}{4}} s^{\frac{\alpha}{q_{0}}-\frac{m}{4}} \lambda^{-1}\|s^{-\frac{\alpha}{q_{0}}} F(s,\cdot)\|_{L^{1}\left(\mathbb{R}^{2}\right)} \\
& \leq C \lambda^{\frac{1}{2}}|t-s|^{-\f{m+2}{4}}\|s^{-\frac{\alpha}{q_{0}}} F(s,\cdot)\|_{L^{1}\left(\mathbb{R}^{2}\right)},
\end{aligned}
\end{equation}
while for the $L^{2}$ estimate, it holds that
\begin{equation}\label{a-34}
\|t^{\frac{\alpha}{q_{0}}} T_{t, s} F(\cdot)\|_{L^{2}\left(\mathbb{R}^{2}\right)}
\leq C \lambda^{-1}\|s^{-\frac{\alpha}{q_{0}}} F(s,\cdot)\|_{L^{2}\left(\mathbb{R}^{2}\right)}.
\end{equation}
By interpolation, we can obtain that for $\lambda \geq 1$ and $|t-s| \leq 2 \bar{T}$,
\begin{equation}\label{a-35}
\begin{aligned}
\|t^{\frac{\alpha}{q_{0}}} T_{t, s} F(\cdot)\|_{L^{q_{0}}\left(\mathbb{R}^{2}\right)} &\leq C \lambda^{\frac{3(\al+q_0)}{(m+3)q_0}-1}|t-s|^{-\frac{(\al+q_0)(m+2)}{2(m+3)q_0}}\|s^{-\frac{\alpha}{q_{0}}} F(s,\cdot)\|_{L^{p_{0}}\left(\mathbb{R}^{2}\right)}\\
& \leq C(\bar{T})|t-s|^{-\frac{(m+1)q_0-2 \al}{(m+3)q_0}}\|s^{-\frac{\alpha}{q_{0}}} F(s,\cdot)\|_{L^{p_{0}}\left(\mathbb{R}^{2}\right)},
\end{aligned}
\end{equation}
where  the facts of $\frac{3(\al+q_0)}{(m+3)q_0}-1<0$ and $-\frac{(\al+q_0)(m+2)}{2(m+3)q_0}>-\frac{(m+1)q_0-2 \al}{(m+3)q_0}$
have been used in the last inequality.
Due to the remaining part of the proof procedure is completely analogous to that of Lemma~\ref{lem3.4-L},  we omit
the details here.
\end{proof}

\section{Spacetime-weighted Strichartz estimate for inhomogenous Tricomi equations}\label{E-9}

In this section, we establish the spacetime-weighted Strichartz estimate for the 2-D inhomogenous problem
\begin{equation}\label{a-5-L}
\begin{cases}\partial_t^2 w-t^{m} \Delta w=F(t, x), \quad (t,x)\in [1,\infty)\times \R^2,\\
w(1, x)=0,\quad \p_t w(1, x)=0.
\end{cases}
\end{equation}
It is pointed out that the proof procedure is somewhat analogous to that in Sections 3-5 of \cite{HWY4} for
the 3-D generalized Tricomi equation $\partial_t^2 \t w-t^{m} \Delta \t w=\t F(t, x)$ with $m\ge 1$, but
we  will give the details for reader's convenience due to the 2-D case of $\partial_t^2 w-t^{m} \Delta w
=F(t, x)$ with $m>0$ and the appearance of the related time weight. On the other hand, we need more careful computations on
the 2-D solution $w$ than the 3-D solution $\t w$ since the solution $w$ admits lower time-decay rate (see  Line 8 of Page 54 and
the argument of (4-42) in \cite{HWY4} for the case of $n\ge 3$ and $m\ge 1$).

Our main result is
\begin{theorem}\label{th2-2}
For problem \eqref{a-5-L}, if $F(t, x) \equiv 0$ when $|x|>\phi_{m}(t)-1$, then there exist some constants $\gamma_{1}$ and $\gamma_{2}$ satisfying $0<\gamma_{1}<\frac{m+1}{m+2}-\frac{m+4+2\al}{(m+2) q}$ and $\gamma_{2}>\frac{1}{q}$, such that
\begin{equation}\label{a-6}
\big\|\left(\phi_{m}(t)^{2}-|x|^{2}\right)^{\gamma_{1}} t^{\frac{\alpha}{q}} w\big\|_{L^{q}\left(\left[1, \infty\right) \times \mathbb{R}^{2}\right)} \leq C\big\|\left(\phi_{m}(t)^{2}-|x|^{2}\right)^{\gamma_{2}} t^{-\frac{\alpha}{q}} F\big\|_{L^{\frac{q}{q-1}}\left(\left[1, \infty\right) \times \mathbb{R}^{2}\right)},
\end{equation}
where $2 \leq q \leq \frac{2(m+3+\alpha)}{m+1}$, $-1<\al\leq m$ and $C>0$ is a constant depending on $m, \al,  q, \gamma_{1}$ and $\gamma_{2}$.
\end{theorem}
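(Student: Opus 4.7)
The plan is to reduce the full family of estimates \eqref{a-6} to its two endpoints $q = q_0 := \frac{2(m+3+\al)}{m+1}$ (namely \eqref{equ:3.3-L}) and $q = 2$ (namely \eqref{equ:3.4.1-L}), and then to interpolate. Following the template of \cite{Gls1} and \cite{HWY4}, I would introduce an analytic family of operators obtained by promoting the two real weight exponents $(\gamma_1,\gamma_2)$ to complex parameters, verify that the corresponding bounds hold with acceptable growth on two vertical lines in the complex plane, and invoke Stein's analytic interpolation theorem to recover the full range of $(\gamma_1,\gamma_2)$ claimed in the theorem for every intermediate $q$.

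For the endpoint $q = q_0$ I would split the region $\{(t,x) : \phi_m^2(t) - |x|^2 \geq 1\}$ into a \emph{relatively small time} slab $T \leq t \leq 2T$ with $1 \leq T \leq 2\cdot 10^{2/(m+2)}$ and a \emph{relatively large time} part $T \geq 2\cdot 10^{2/(m+2)}$. On the small-time slab the factor $(\phi_m^2(t) - |x|^2)^{\gamma_i}$ is uniformly comparable to $1$, so the target estimate collapses to the purely time-weighted Strichartz inequality \eqref{equ:un-chara-L}, which is supplied by Lemma \ref{lem4} when $0 < \al \leq m$ and by Lemma \ref{lem5} when $-1 < \al < 0$. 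On the large-time piece I would decompose further according to the size of $|\phi_m(t) - |x||$: in the very small and very large scales the estimate drops out of the support condition $F(t,x) \equiv 0$ for $|x| > \phi_m(t) - 1$ combined with crude kernel bounds on the Fourier integral representation \eqref{SW-2}; the substantive case is the medium scale, where one represents the solution as a superposition of two Fourier integral operators $T_z$ and $\tilde T_z$ indexed by the complex interpolation parameter, bounds each via stationary phase in dyadic frequency annuli, and glues the resulting bounds via complex interpolation, branching on whether $\al > 0$ or $-1 < \al \leq 0$.

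For the endpoint $q = 2$ I would again split along the small/large time dichotomy, with the small-time contribution absorbed into Lemma \ref{lem4}--\ref{lem5}. The key novelty in the 2-D setting is that, on the large-time piece, one must replace the isotropic weight $(\phi_m^2(t) - |x|^2)^{1/2}$ that sufficed in \cite{HWY4} for $n \geq 3$ by the sharper anisotropic splitting $(\phi_m(t) + |x|)^{1/2}\bigl(\phi_m(t) - |x|\bigr)^{1/2 - 1/(m+2) - \epsilon}$ with $\epsilon > 0$ arbitrarily small; this extra factor compensates exactly for the weaker pointwise time-decay available when $n = 2$ and, after Plancherel in $x$ followed by a Hardy-type estimate in $t$, yields \eqref{equ:3.4.1-L} with $\gamma_2 > 1/2$ and $\gamma_1 < -\frac{1}{2} + \frac{m - \al}{m+2}$. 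The main obstacle I anticipate is the medium-scale portion of the $q = q_0$ endpoint when $-1 < \al \leq 0$: in this range $p_{\crit}(2,m,\al)$ equals $1 + \frac{2+\al}{m+1}$ rather than the quadratic root, so the stationary-phase argument of \cite{HWY4} does not carry over verbatim, and one has to carefully redo the dyadic decomposition and complex interpolation of $T_z$ and $\tilde T_z$ so that the final exponents land in the required range of $(\gamma_1,\gamma_2)$; this is precisely the place where the 2-D analysis diverges most substantively from the $n \geq 3$ template of \cite{HWY4}, and it is where I expect the bulk of the technical work to lie.
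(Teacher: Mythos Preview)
Your outline is essentially the paper's proof: reduce to the two endpoints $q=q_0$ and $q=2$ via Stein interpolation, handle the small-time slab by Lemmas~\ref{lem4}--\ref{lem5}, and on the large-time piece split according to the size of $\phi_m(t)-|x|$, with the medium scale treated via the analytic families $T_z$, $\tilde T_z$ and complex interpolation, branching on the sign of $\alpha$; for $q=2$ the anisotropic weight $(\phi_m(t)+|x|)^{1/2}(\phi_m(t)-|x|)^{1/2-1/(m+2)-\epsilon}$ is exactly what the paper uses in \eqref{SZH}--\eqref{a-94}.

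One correction to your diagnosis of the difficulty: for $-1<\alpha\le 0$ the critical exponent is still $p_{\crit}(2,m,\alpha)=p_2(2,m,\alpha)$, not $p_1$ (the switch to $p_1$ happens only for $\alpha\le -1$), and in any case Theorem~\ref{th2-2} is a linear estimate that does not see $p_{\crit}$. The actual 2-D obstacle in the $q=q_0$ medium-scale step is the $L^\infty$ bound \eqref{a-70} for $T_z$ at $\operatorname{Re} z=\frac{2(m+3+\alpha)}{(m+2)(\alpha+2)}$: the relevant frequency integrals (see \eqref{a-72-1}--\eqref{a-73-1}) are only borderline convergent when $n=2$, and one must split at $|\xi|=1$ and exploit the precise exponent gap rather than rely on the extra decay available for $n\ge 3$ in \cite{HWY4}. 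Your plan would still go through once you localize the difficulty there rather than in the value of $p_{\crit}$.
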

By the Stein analytic interpolation theorem (see \cite{S}), in order to prove \eqref{a-6}, we only need to establish the following inequalities for the two endpoint cases of $q=q_{0}=\frac{2(m+3+\alpha)}{m+1}$ and $q=2$ :
\begin{equation}\label{a-7}
\|\left(\phi_{m}(t)^{2}-|x|^{2}\right)^{\gamma_{1}} t^{\frac{\alpha}{q_{0}}} w\|_{L^{q_{0}}\left(\left[1, \infty\right) \times \mathbb{R}^{2}\right)} \leq C\|\left(\phi_{m}(t)^{2}-|x|^{2}\right)^{\gamma_{2}} t^{-\frac{\alpha}{q_{0}}} F\|_{L^{\frac{q_{0}}{q_{0}-1}}\left(\left[1, \infty\right) \times \mathbb{R}^{2}\right)},
\end{equation}
where $\gamma_{1}<\frac{1}{q_{0}}<\gamma_{2}$, $-1<\al\leq m$; and
\begin{equation}\label{a-8}
\|\left(\phi_{m}(t)^{2}-|x|^{2}\right)^{\gamma_{1}} t^{\frac{\alpha}{2}} w\|_{L^{2}\left(\left[1, \infty\right) \times \mathbb{R}^{2}\right)} \leq C\|\left(\phi_{m}(t)^{2}-|x|^{2}\right)^{\gamma_{2}} t^{-\frac{\alpha}{2}} F\|_{L^{2}\left(\left[1, \infty\right) \times \mathbb{R}^{2}\right)},
\end{equation}
where $\gamma_{1}<-\frac{1}{2}+\frac{m-\alpha}{m+2}$, $\gamma_{2}>\frac{1}{2}$ and $-1<\al\leq m$. \eqref{a-7} and \eqref{a-8}
will be established in Subsections \ref{Sub-4.1} and \ref{Sub-4.2} below, respectively.

\subsection{Proof of Theorem~\ref{th2-2} for the endpoint case $q=q_0$}\label{Sub-4.1}

Based on Lemma~\ref{lem4} and Lemma~\ref{lem5}, we now prove \eqref{a-7}. At this time, \eqref{a-7} can be rewritten as
\begin{equation}\label{a-37}
\|\left(\phi_{m}^{2}(t)-|x|^{2}\right)^{\frac{1}{q_{0}}-\nu} t^{\frac{\alpha}{q_{0}}} w\|_{L^{q_{0}}\left(\left[1, \infty\right) \times \mathbb{R}^{2}\right)} \leq C\|\left(\phi_{m}^{2}(t)-|x|^{2}\right)^{\frac{1}{q_{0}}+\nu} t^{-\frac{\alpha}{q_{0}}} F\|_{L^{\frac{q_{0}}{q_{0}-1}}\left(\left[1, \infty\right) \times \mathbb{R}^{2}\right)},
\end{equation}
where $\nu>0$. Notice that for any fixed $\bar{T} \gg 1$,  $\phi_{m}^{2}(t)-|x|^{2}$ and $t^{\frac{\alpha}{q_{0}}}$ have upper and lower bounds for $1 \leq t \leq \bar{T}$. This, together with Lemma~\ref{lem4} and Lemma~\ref{lem5}, yields
\begin{equation}\label{a-38}
\begin{aligned}
&\|\left(\phi_{m}^{2}(t)-|x|^{2}\right)^{\frac{1}{q_{0}}-\nu} t^{\frac{\alpha}{q_{0}}} w\|_{L^{q_{0}}\left([1, \bar{T}] \times \mathbb{R}^{2}\right)} \\
& \leq C|\ln \bar{T}|^{\frac{1}{q_{0}}} \phi_{m}(\bar{T})^{-\nu}\|\left(\phi_{m}^{2}(t)-|x|^{2}\right)^{\frac{1}{q_{0}}+\nu} t^{-\frac{\alpha}{q_{0}}} F\|_{L^{\frac{q_{0}}{q_{0}-1}}\left([1, \bar{T}] \times \mathbb{R}^{2}\right)}.
\end{aligned}
\end{equation}
Thus in order to prove \eqref{a-37}, it suffices to prove that for $T \geq \bar{T}$,
\begin{equation}\label{a-39}
\begin{aligned}
&\|\left(\phi_{m}^{2}(t)-|x|^{2}\right)^{\frac{1}{q_{0}}-\nu} t^{\frac{\alpha}{q_{0}}} w\|_{L^{q_{0}}\left([T, 2 T] \times \mathbb{R}^{2}\right)} \\
& \leq C|\ln T|^{\frac{1}{q_{0}}} \phi_{m}(T)^{-\nu}\|\left(\phi_{m}^{2}(t)-|x|^{2}\right)^{\frac{1}{q_{0}}+\nu} t^{-\frac{\alpha}{q_{0}}} F\|_{L^{\frac{q_{0}}{q_{0}-1}}\left([1,+\infty) \times \mathbb{R}^{2}\right)},
\end{aligned}
\end{equation}
where $\bar{T}>1$ is a fixed large constant. Next we will focus on the proof of \eqref{a-39}.

Due to supp $F \subseteq\left\{(t, x):|x|^{2} \leq \phi_{m}^{2}(t)-1\right\}$, then we decompose $F=F^{0}+F^{1}$, where
\begin{equation}\label{a-40}
F^{0}=\begin{cases}
F, \quad t \geq \frac{T}{2 \cdot 10^{\frac{2}{m+2}}},\\
0, \quad t<\frac{T}{2 \cdot 10^{\frac{2}{m+2}}}.
\end{cases}
\end{equation}
And $w=w^{0}+w^{1}$, where $w^{j}(j=0,1)$ solves
$$
\left\{\begin{array}{l}
\partial_{t}^{2} w^{j}-t^{m} \Delta w^{j}=F^{j},\\
w^{j}\left(1, x\right)=0, \quad \partial_{t} w^{j}\left(1, x\right)=0.
\end{array}\right.
$$
To prove \eqref{a-39}, by an  analogous simplification procedure in Section 4A of \cite{HWY4},  we only need to show that for $j=0,1$,
\begin{equation}\label{a-44}
\begin{aligned}
\left(\frac{\phi_{m}(T)}{\phi_{m}\left(\bar{T_{0}}\right)}\right)^{\frac{1}{q_{0}}-\frac{\nu}{2}} \delta^{\frac{1}{q_{0}}+\frac{\nu}{2}}&\|t^{\frac{\alpha}{q_{0}}} w^{j}\|_{L^{q_{0}}\left(\left\{(t, x): T \leq t \leq 2 T, \delta \phi_{m}\left(\bar{T_{0}}\right) \leq \phi_{m}(t)-|x| \leq 2 \delta \phi_{m}\left(\bar{T_{0}}\right)\right\}\right)} \\
&\leq C \delta_{0}^{\frac{1}{q_{0}}}\|t^{-\frac{\alpha}{q_{0}}} F^{j}\|_{L^{\frac{q_{0}}{q_{0}-1}}\left(\left[\bar{T_{0}}, 2\bar{T_{0}}\right] \times \mathbb{R}^{2}\right)},
\end{aligned}
\end{equation}
where $\bar T_0=2^k\bar T$ for any $k=0, 1, 2, ...$ but $\bar T_0\le T$.

For convenience, set $F^{j}(t, x)=: \bar{T}_{0}{ }^{2} F^{j}(\bar{T}_{0} t, \bar{T}_{0} ^{\frac{m+2}{2}} x)$ and $v^{j}(t, x)=: w^{j}(\bar{T}_{0} t, \bar{T}_{0}{ }^{\frac{m+2}{2}} x)$ for $j=0,1$. Then $v^{j}$ solves
\begin{equation}\label{a-45}
\partial_{t}^{2} v^{j}-t^{m} \triangle v^{j}=F^{j}(t, x),
\end{equation}
where $\operatorname{supp} F^{j} \subseteq D_{t, x}^{\delta_0}=\left\{(t, x): 1 \leq t \leq 2, \delta_{0} \phi_{m}(1) \leq \phi_{m}(t)-|x| \leq 2 \delta_{0} \phi_{m}(1)\right\}$. If $T / \bar{T}_{0}$ is written as $T$, then \eqref{a-44} becomes
\begin{equation}\label{a-46}
\begin{aligned}
&\phi_{m}(T)^{\frac{1}{q_{0}}-\frac{\nu}{2}} \delta^{\frac{1}{q_{0}}+\frac{\nu}{2}}\|t^{\frac{\alpha}{q_{0}}} v^{j}\|_{L^{q_{0}}\left(\left\{(t, x): T \leq t \leq 2 T, \delta \phi_{m}(1) \leq \phi_{m}(t)-|x| \leq 2 \delta \phi_{m}(1)\right\}\right)} \\
& \leq C \delta_{0}^{\frac{1}{q_{0}}}\|t^{-\frac{\alpha}{q_{0}}} F^{j}\|_{L^{\frac{q_{0}}{q_{0}-1}}(D_{t, x}^{\delta_0})}.
\end{aligned}
\end{equation}
At this time, by \eqref{a-40},  $1 \leq T \leq \phi_{m}^{-1}\left(10 \phi_{m}(2)\right)$ holds for $(t, x) \in \operatorname{supp} v^{0}$ and $T \leq t \leq 2 T$, which is called the ``relatively small time". On the other hand, it holds that  $T \geq \phi_{m}^{-1}\left(10 \phi_{m}(2)\right)$  for $(t, x) \in \operatorname{supp} v^{1}$ and $T \leq t \leq 2 T$, which is called the ``relatively large time".
In the next two subsections, we shall deal with the two cases, respectively. To simplify notation, we shall omit the superscript $j$  and define
\begin{equation}\label{a-47}
D_{t, x}^{T, \delta}=\left\{(t, x): T \leq t \leq 2 T, \delta \phi_{m}(1) \leq \phi_{m}(t)-|x| \leq 2 \delta \phi_{m}(1)\right\} .
\end{equation}
Thus \eqref{a-46} is equivalent to
\begin{equation}\label{a-48}
\phi_{m}(T)^{\frac{1}{q_{0}}-\frac{\nu}{2}} \delta^{\frac{1}{q_{0}}+\frac{\nu}{2}}\|t^{\frac{\alpha}{q_{0}}} v\|_{L^{q_{0}}(D_{t, x}^{T, \delta})} \leq C \delta_{0}^{\frac{1}{q_{0}}}\|t^{-\frac{\alpha}{q_{0}}} F\|_{L^{\frac{q_{0}}{q_{0}-1}}(D_{t, x}^{\delta_0})}.
\end{equation}

\subsubsection{\large{Large time case}}\label{subsub-1}

 In this case, it is known from \eqref{a-48} and \eqref{a-47}  that the integral domain in the expression of $v$
is $\left\{\delta \phi_{m}(1) \leq \phi_{m}(t)-|x| \leq 2 \delta \phi_{m}(1) \right\}$ and the support of $F$
 is included in $\{\delta_0 \phi_{m}(1) \leq \phi_{m}(t)-|x| \leq$ $2 \delta_0 \phi_{m}(1)\}$. In terms of the  range of $\delta / \delta_{0}$,
 we have the following three cases:\\
(i) $\delta_{0} \leq \delta \leq 40 \cdot 2^{\frac{m+2}{2}} \delta_{0}$;\\
(ii) $\delta \geq 10 \cdot 2^{\frac{m+2}{2}}$;\\
(iii) $40 \cdot 2^{\frac{m+2}{2}} \delta_{0} \leq \delta \leq 10 \cdot 2^{\frac{m+2}{2}}$ for $\delta_0<\f{1}{4}$.\\
Here we point out that in Cases (i)-(iii), the inequality similar to \eqref{a-48} can be established  (see  Section 4B of \cite{HWY4}
for the case of space dimensions $n\ge 3$). However, due to the appearance of the time weight $t^{\f{\alpha}{q_0}}$
and the 2-D case in \eqref{a-48}, we  shall give the details.
\vskip 0.2 true cm

\textbf{Case (i). Small $\delta$}
\vskip 0.2 true cm

Note that due to $\phi_{m}(1)>0$ and $\delta \phi_{m}(1) \leq \phi_m(t)\leq2^{\f{m+2}{2}}\phi_{m}(T)$ in
the support of $v$, then $\delta \lesssim \phi_{m}(T)$ holds.
To prove  \eqref{a-48}, it  suffices to show
\begin{equation}\label{a-49}
\phi_{m}(T)^{\frac{1}{q_{0}}}\|t^{\frac{\alpha}{q_{0}}} v\|_{L^{q_{0}}\left(D_{t, x}^{T, \delta}\right)} \leq C\|t^{-\frac{\alpha}{q_{0}}} F\|_{L^{\frac{q_{0}}{q_{0}-1}}(D_{t, x}^{\delta_0})}.
\end{equation}
Note that
\begin{equation}\label{a-50}
v(t, x)=\int_{1}^{t} H(t, s, x) \mathrm{d} s,
\end{equation}
where
$$
H(t,s,x)=\int_{\mathbb{R}^{2}} e^{i\left\{x \cdot \xi-\left[\phi_{m}(t)-\phi_{m}(s)\right]|\xi|\right\}} a(t, s, \xi) \hat{F}(s, \xi) \mathrm{d} \xi
$$
with
\begin{equation}\label{a-51}
\left|\partial_{\xi}^{\kappa} a(t, s, \xi)\right| \leq C\left(1+\phi_{m}(t)|\xi|\right)^{-\frac{m}{2(m+2)}}\left(1+\phi_{m}(s)|\xi|\right)^{-\frac{m}{2(m+2)}}
|\xi|^{-\frac{2}{m+2}-|\kappa|}.
\end{equation}
Since
\begin{equation}\label{a-52}
\|t^{\frac{\alpha}{q_{0}}} H(t, s, \cdot)\|_{L^{q_{0}}\left(\mathbb{R}^{2}\right)} \leq C|t-s|^{-\frac{2}{q_{0}}\left(1+\frac{m}{4}\right)}\|s^{-\frac{\alpha}{q_{0}}} F(s, \cdot)\|_{L^{\frac{q_{0}}{q_{0}-1}}\left(\mathbb{R}^{2}\right)}
\end{equation}
is shown in Appendix, then by the support condition of $v$ and $F$,  one has
$$
\begin{aligned}
\|t^{\frac{\alpha}{q_{0}}} v\|_{L^{q_{0}}([T, 2 T] \times \mathbb{R}^{2})} \leq\|\int_{1}^{t}\| t^{\frac{\alpha}{q_{0}}} v(t, s, \cdot)\|_{L^{q_{0}}(\mathbb{R}^{2})} \mathrm{d} s\|_{L^{q_{0}}([T, 2 T])}
\leq C \phi_{m}(T)^{-\frac{1}{q_{0}}}\|s^{-\frac{\alpha}{q_{0}}} F\|_{L^{\frac{q_{0}}{q_{0}-1}}([1, 2]\times \mathbb{R}^{2})},
\end{aligned}
$$
which implies \eqref{a-49}.

\vskip 0.2 true cm
{\bf Case (ii). large $\delta$}
\vskip 0.2 true cm

In this case, we have $\phi_{m}(t)-|x| \geq \delta \phi_m(1)\geq 10 \phi_{m}(2)$. As in \eqref{a-50} and \eqref{a-53}, set
$$
v=\sum_{j=-\infty}^{\infty} v_{j}=\sum_{j=-\infty}^{\infty} \int_{1}^{t} \int_{\mathbb{R}^{2}} K_{j}(t, x ; s, y) F(s, y) \mathrm{d} y \mathrm{d}s,
$$
where
\begin{equation}\label{a-59}
K_{j}(t, x ; s, y)=\int_{\mathbb{R}^{2}} e^{i\left\{(x-y) \cdot \xi-\left[\phi_{m}(t)-\phi_{m}(s)\right]|\xi|\right\}} \vp(\frac{|\xi|}{2^{j}}) a(t, s, \xi)  \mathrm{d} \xi.
\end{equation}
In addition, from (3.41) of \cite{HWY1}, one has
\begin{equation}\label{a-60}
\begin{aligned}
\left|K_{j}(t, x ; s, y)\right| \leq & C_{N} \lambda_{j}^{\frac{3}{2}-\frac{2}{m+2}}\left(\left|\phi_{m}(t)-\phi_{m}(s)\right|+\lambda_{j}^{-1}\right)^{-\frac{1}{2}}\left(1+\phi_{m}(t) \lambda_{j}\right)^{-\frac{m}{2(m+2)}} \\
& \times\left(1+\lambda_{j}| | \phi_{m}(t)-\phi_{m}(s)|-|x-y||\right)^{-N},
\end{aligned}
\end{equation}
where $\lambda_j=2^j$ and $N \in \mathbb{R}^{+}$. Then it follows from H\"{o}lder's inequality and the choice of $N=\f{3}{2}-\f{2}{m+2}-\f{m}{2(m+2)}=\frac{m+1}{m+2}$
that
$$
\begin{aligned}
|t^{\frac{\alpha}{q_{0}}} v_{j}| &\leq\|t^{\frac{\alpha}{q_{0}}} K_{j}(t, x ; s, y)\left(\phi_{m}^{2}(t)-|x|^{2}\right)^{\frac{1}{q_{0}}} s^{\frac{\alpha}{q_{0}}}\|_{L^{q_{0}}(D_{s, y}^{\delta_0})}\|\left(\phi_{m}^{2}(t)-|x|^{2}\right)^{-\frac{1}{q_{0}}} s^{-\frac{\alpha}{q_{0}}}
F(s, y)\|_{L^{\frac{q_{0}}{q_{0}-1}}(D_{s, y}^{\delta_0})}\\
& \leq C \phi_{m}(t)^{-\frac{m+1}{m+2}+\frac{1}{q_{0}}+\frac{2 \alpha}{(m+2) q_{0}}}\left(\phi_{m}(t)-|x|\right)^{-\frac{m+1}{m+2}+\frac{1}{q_{0}}}\delta_{0}^{\frac{1}{q_{0}}} \left(\delta \phi_{m}(T)\right)^{-\frac{1}{q_{0}}}\|s^{-\frac{\alpha}{q_{0}}} F\|_{L^{\frac{q_{0}}{q_{0}-1}}(D_{s, y}^{\delta_0})}.
\end{aligned}
$$
In addition, for $-1<\al\leq m$ and $T>1$,
\begin{equation}\label{a-63}
\begin{aligned}
&\|\phi_{m}(t)^{-\frac{m+1}{m+2}+\frac{1}{q_{0}}+\frac{2 \alpha}{(m+2) q_{0}}}\left(\phi_{m}(t)-|x|\right)^{-\frac{m+1}{m+2}+\frac{1}{q_{0}}}\|_{L^{q_{0}}\left(D_{t, x}^{T, \delta}\right)} \\
& \leq C\big(\int_{T}^{2 T} \phi_{m}(t)^{-\f{m+1}{m+2}\cdot q_0+1+\f{2\al}{(m+2)q_0}} \int_{0}^{\phi_{m}(t)-10 \phi_{m}(2)}\left(\phi_{m}(t)-r\right)^{-\f{m+1}{m+2}\cdot q_0+1} r \mathrm{d} r \mathrm{d} t\big)^{\frac{1}{q_{0}}} \\
& \leq C\big(\int_{T}^{2 T} \phi_{m}(t)^{-\f{m+1}{m+2}\cdot q_0+\f{2\al}{(m+2)q_0}+2} \mathrm{d} t\big)^{\frac{1}{q_{0}}} \\
& \leq C \big(\int_{T}^{2T} \phi_m(t)^{-\f{m+4}{m+2}}\mathrm{d} t\big)^{\f{1}{q_0}}\\
& \leq C.
\end{aligned}
\end{equation}
Therefore,
\begin{equation}\label{aa-64}
\begin{aligned}
\|t^{\frac{\alpha}{q_{0}}}v_{j}\|_{L^{q_{0}}(D_{t, x}^{T, \delta})} & \leq C \delta_{0}^{\frac{1}{q_{0}}}\left(\delta \phi_{m}(T)\right)^{-\frac{1}{q_{0}}}\|s^{-\frac{\alpha}{q_{0}}} F\|_{L^{\frac{q_{0}}{q_{0}-1}}(D_{s, y}^{\delta_0})} \\
& \leq C \delta_{0}^{\frac{1}{q_{0}}} \phi_{m}(T)^{-\frac{1}{q_{0}}+\frac{\nu}{2}} \delta^{-\frac{1}{q_{0}}-\frac{\nu}{2}}\|t^{-\frac{\alpha}{q_{0}}} F\|_{L^{\frac{q_{0}}{q_{0}-1}}(D_{t, x}^{\delta_0})},
\end{aligned}
\end{equation}
where we have used the fact of $\delta \lesssim \phi_{m}(T)$ for $\delta \phi_{m}(1) \leq 2^\f{m+2}{2} \phi_{m}(T)$.
Then by Lemma \ref{lem3-E},
\eqref{aa-64} implies the estimate \eqref{a-48} for large $\delta$.

\vskip 0.2 true cm
\textbf{Case (iii). medium $\delta$}
\vskip 0.2 true cm

First, by \eqref{aa-28}, the solution $v$ of \eqref{a-45} can be written as
\begin{equation}\label{a-64}
v(t, x)=\int_{1}^{t} \int_{\mathbb{R}^{2}} e^{i\left(x \cdot \xi-\left(\phi_{m}(t)-\phi_{m}(s)\right)|\xi|\right)} a(t, s, \xi) \hat{F}(s, \xi) \mathrm{d} \xi \mathrm{d} s,
\end{equation}
where  $a(t,s,\xi)$  satisfies that for $\kappa \in \mathbb{N}_{0}^{2}$,
\begin{equation}\label{a-65}
\begin{aligned}
\left|\partial_{\xi}^{\kappa} a(t, s, \xi)\right| & \leq C\left(1+\phi_{m}(t)|\xi|\right)^{-\frac{m}{2(m+2)}}\left(1+\phi_{m}(s)|\xi|\right)^{-\frac{m}{2(m+2)}}|\xi|^{-\frac{2}{m+2}-|\kappa|} \\
& \leq C
\left(1+\phi_{m}(t)|\xi|\right)^{-\frac{m}{2(m+2)}}|\xi|^{-\frac{2}{m+2}-|\kappa|}
\end{aligned}
\end{equation}
or
\begin{equation}\label{aa-65}
\left|\partial_{\xi}^{\kappa} a(t, s, \xi)\right|  \leq C \phi_{m}(t)^{-\frac{m}{2(m+2)}}|\xi|^{-1-|\kappa|}
\end{equation}
We emphasize that the appearance of the decay factors $\left(1+\phi_{m}(t)|\xi|\right)^{-\frac{m}{2(m+2)}}|\xi|^{-\frac{2}{m+2}}$ in \eqref{a-65} and $\phi_{m}(t)^{-\frac{m}{2(m+2)}}|\xi|^{-1}$ in \eqref{aa-65}
is crucial for the proof of \eqref{a-48}.

Set $\tau=\phi_{m}(s)-|y|$, then
\begin{equation}\label{a-66}
\begin{aligned}
|t^{\frac{\alpha}{q_{0}}} v|
&\leq \int_{\delta_{0}}^{2 \delta_{0}}\left|\int_{\mathbb{R}^{2}} \int_{\mathbb{R}^{2}} e^{i\left\{(x-y) \cdot \xi-\left[\phi_{m}(t)-\tau-|y|\right] \xi \mid\right\}}t^{\f{\alpha}{q_0}} a(t, \phi_{m}^{-1}(\tau+|y|), \xi) F\left(\phi_{m}^{-1}(\tau+|y|), y\right) \mathrm{d} \xi \mathrm{d} y\right| \mathrm{d} \tau \\
&\leq C \delta_{0}^{\f{1}{q_0}}(\int_{\delta_{0}}^{2 \delta_{0}}|\int_{\mathbb{R}^{2}} \int_{\mathbb{R}^{2}}e^{i(x-y) \cdot \xi-i[\phi_{m}(t)-\tau-|y|]|\xi|}t^{\frac{\alpha}{q_{0}}}a(t, \phi_{m}^{-1}(\tau+|y|), \xi)\\
&\qquad\qquad\qquad\qquad\qquad\qquad\qquad \times F(\phi_{m}^{-1}(\tau+|y|), y)\mathrm{d}\xi \mathrm{d} y|^{\f{q_0}{q_0-1}}\mathrm{d} \tau )^{\f{q_0-1}{q_0}}.
\end{aligned}
\end{equation}
Due to
$
\phi_{m}(t) \geq\phi_{m}(T)\geq 10 \phi_{m}(2)
$
and $\tau<\phi_{m}(s)<\phi_{m}(2)$, one can replace $\phi_{m}(t)-\tau$ with $\phi_{m}(t)$ in \eqref{a-66} and denote
\begin{equation}\label{a-66-M}
\begin{aligned}
T g(t, x)=\int_{\mathbb{R}^{2}} \int_{\mathbb{R}^{2}} e^{i\left\{(x-y) \cdot \xi -\left[\phi_{m}(t)-|y|\right]|\xi|\right\}}t^{\frac{\alpha}{q_{0}}} b(t, \xi) g(y) \mathrm{d} \xi \mathrm{d} y .
\end{aligned}
\end{equation}
Note that the estimate of $\|Tg\|_{{L^{q_{0}}(D_{t, x}^{T, \delta})}}$ with $\alpha=0$ has been established in \cite{HWY4}
for space dimensions $n\ge 3$, from now on \(\alpha\not=0\) will be assumed.
The proof procedure is divided into two parts with respect to the range of $\al$.

\vskip 0.2 true cm

\textbf{Part 1. $-1<\alpha<0$}

\vskip 0.2 true cm

In this case,  it follows from \eqref{a-65} and direct computation that the symbol $b(t, \xi)$ in \eqref{a-66-M}
satisfies
\[|\partial_\xi^\kappa b(t,\xi)|\leq C\big(1+\phi_m(t)|\xi|\big)^{-\frac{m}{2(m+2)}}|\xi|^{-\frac{2}{m+2}-|\kappa|}.\]
Based on this and \eqref{a-66-M}, motivated by the ideas in  $\S 3$ of \cite{Gls1} and Section 4B3 of \cite{HWY4}, we
introduce the following  operator for $z \in \mathbb{C}$,
\begin{equation}\label{a-67}
\begin{aligned}
\left(T_{z} g\right)(t, x)= & \big(z-\frac{2(m+3+\alpha)}{(m+2)(\alpha+2)}\big) e^{z^{2}} \\
& \times \int_{\mathbb{R}^{2}} \int_{\mathbb{R}^{2}} e^{i\left\{(x-y) \cdot \xi-\left[\phi_{m}(t)-|y|\right] \xi \mid\right\}} t^{\frac{\alpha}{q_{0}}}\left(1+\phi_{m}(t)|\xi|\right)^{-\frac{m}{2(m+2)}} g(y) \frac{\mathrm{d} \xi}{|\xi|^{z}} \mathrm{~d} y,
\end{aligned}
\end{equation}
where $\phi_{m}(t) \geq \phi_{m}(t)-\phi_m(s) \geq 9 \phi_{m}(2)$ and $\delta<10 \cdot 2^{\frac{m+2}{2}}$.
Then by Lemma A. 2 in \cite{HWY4} (the result in Lemma A.2 does not depend on the space dimensions)
and direct computation, \eqref{a-48} can be derived from
\begin{equation}\label{a-69}
\begin{aligned}
\|\left(T_{z} g\right)(t, \cdot) & \|_{L^{q_{0}}\left(\left\{x: \delta \phi_{m}(1) \leq \phi_{m}(t)-|x| \leq 2 \delta \phi_{m}(1)\right\}\right)} \\
& \leq C \phi_{m}(t)^{\frac{\nu}{q_{0}}-\frac{m+4}{q_{0}(m+2)}} \delta^{-\frac{\nu}{q_{0}}-\frac{1}{q_{0}}}\|g\|_{L^{\frac{q_{0}}{q_{0}-1}}\left(\mathbb{R}^{2}\right)} \quad \text { with }\operatorname{Re} z=\frac{2}{m+2} .
\end{aligned}
\end{equation}
Note that
\begin{equation}\label{a-70}
\left\|\left(T_{z} g\right)(t, \cdot)\right\|_{L^{\infty}\left(\mathbb{R}^{2}\right)} \leq C t^{\frac{\alpha}{q_{0}}} \phi_{m}(t)^{-\frac{1}{2}-\frac{m}{2(m+2)}}\|g\|_{L^{1}\left(\mathbb{R}^{2}\right)} \quad \text { with } \operatorname{Re} z=\frac{2(m+3+ \alpha)}{(m+2)(\alpha+2)}
\end{equation}
and
\begin{equation}\label{a-71}
\left\|\left(T_{z} g\right)(t, \cdot)\right\|_{L^{2}\left(\mathbb{R}^{2}\right)} \leq C t^{\frac{\alpha}{q_{0}}} \phi_{m}(t)^{-\frac{m}{2(m+2)}}\left(\phi_{m}(t)^{\nu} \delta^{-(\nu+1)}\right)^{\frac{1}{m+2}}\|g\|_{L^{2}\left(\mathbb{R}^{2}\right)}
\quad \text { with }   \operatorname{Re} z=0
\end{equation}
have been shown in Lemma \ref{A2} and Lemma \ref{A3}, respectively.
Then by interpolation between \eqref{a-70} and \eqref{a-71}, we arrive at \eqref{a-69} and
\begin{equation}\label{a-78}
\|(T g)(t, \cdot)\|_{L^{q_{0}}\left(\left\{x: \delta \phi_{m}(1) \leq \phi_{m}(t)-|x| \leq 2 \delta \phi_{m}(1)\right\}\right)} \leq C \phi_{m}(t)^{\frac{\nu}{2}-\frac{m+4}{q_{0}(m+2)}} \delta^{-\frac{\nu}{2}-\frac{1}{q_{0}}}\|g\|_{L^{\frac{q_{0}}{q_{0}-1}}\left(\mathbb{R}^{2}\right)}.
\end{equation}

\vskip 0.2 true cm

\textbf{Part 2. $0<\alpha\leq m$}

\vskip 0.2 true cm
It follows from \eqref{aa-65} and direct computation that the symbol $b(t, \xi)$ in \eqref{a-66-M}
satisfies
\[|\partial_\xi^\kappa b(t,\xi)|\leq C\phi_m(t)^{-\frac{m}{2(m+2)}}|\xi|^{-1-|\kappa|}.\]
For  $|\xi| \geq 1$, as in Part 1, introduce the operator $\tilde{T}_{z}$ that for $z \in \mathbb{C}$,
\begin{equation}\label{a-79}
\begin{aligned}
(\tilde{T}_{z} g)(t, x)=\big(z-\frac{m+3+ \alpha}{\alpha+2}\big)& e^{z^{2}} \int_{\mathbb{R}^{2}} \int_{\mathbb{R}^{2}}
e^{i\left((x-y) \cdot \xi-\left(\phi_{m}(t)-|y|\right)|\xi|\right)} \\
& \times t^{\frac{\alpha}{q_{0}}} \phi_{m}(t)^{-\frac{m}{2(m+2)}} g(y) \frac{\mathrm{d} \xi}{|\xi|^{z}} \mathrm{d} y.
\end{aligned}
\end{equation}
Note that for $z=\frac{m+3+ \alpha}{\alpha+2}+i \theta$ with $\theta \in \mathbb{R}$ and $0<\alpha \leq m$,
 there exists $\sigma>0$ such that
$
-\frac{m+3+ \alpha}{\alpha+2}<-\frac{m+3}{m+2}-\sigma
$ holds.
Similarly to the proof of \eqref{a-72-1}, one has
\begin{equation}\label{a-81}
\|(\tilde{T}_z g)(t, \cdot)\|_{L^{\infty}\left(\mathbb{R}^2\right)} \leq C t^{\frac{\alpha}{q_0}} \phi_m(t)^{-\frac{1}{2}-\frac{m}{2(m+2)}}\|g\|_{L^1\left(\mathbb{R}^2\right)} \quad \text { with } \operatorname{Re} z=\frac{m+3+ \alpha}{\alpha+2} .
\end{equation}
On the other hand,  for the $L^{2}$ estimate, as in the proofs of \eqref{a-76} and \eqref{a-77} in Appendix, we can get
\begin{equation}\label{a-82}
\|(\tilde{T}_z g)(t, \cdot)\|_{L^2\left(\mathbb{R}^2\right)} \leq C t^{\frac{\alpha}{q_0}} \phi_m(t)^{-\frac{m}{2(m+2)}}\left(\phi_m(t)^\nu \delta^{-(\nu+1)}\right)^{\frac{1}{m+2}}\|g\|_{L^2\left(\mathbb{R}^2\right)} \quad \text { with } \operatorname{Re} z=0 .
\end{equation}
Interpolating  between \eqref{a-81} with \eqref{a-82} yields
$$
\begin{aligned}
\|(\tilde{T}_{z} g)(t, \cdot)  \|_{L^{q_{0}}\left(\R^2\right)} \leq C \phi_{m}(t)^{\frac{\nu}{q_{0}}-\frac{m+4}{q_{0}(m+2)}} \delta^{-\frac{\nu}{q_{0}}-\frac{1}{q_{0}}}\|g\|_{L^{\frac{q_{0}}{q_{0}-1}}\left(\mathbb{R}^{2}\right)} \quad \text { with } \operatorname{Re} z=1.
\end{aligned}
$$
When $0<|\xi|<1$, the analysis on $\|Tg\|_{{L^{q_{0}}(D_{t, x}^{T, \delta})}}$ is completely analogous to that in \eqref{a-73-1},
the related details are omitted.
Then \eqref{a-78} can be established for $0<\alpha \leq m$ and the corresponding proof for the medium $\delta$ case is completed.

Collecting all the results above, \eqref{a-48} is proved for the relatively large time.

\vskip 0.1 true cm
\subsubsection{\large{Small time case}}\label{subsub-2}

In this case, $1\leq T \leq 2\cdot 10^\f{2}{m+2}$ holds.
We now prove \eqref{a-48}.

As in the Subsection \ref{subsub-1}, we shall divide the proof of \eqref{a-48} into the following two parts
in terms of the range of $\delta / \delta_{0}$:\\
(i) $\delta_{0} \leq \delta \leq 10 \cdot 2^{\frac{m+2}{2}} \delta_{0}$;\\
(ii) $10 \cdot 2^{\frac{m+2}{2}} \delta_{0} \leq \delta \leq (2T)^{\frac{m+2}{2}}$ .

We first consider case (i). Note that $\delta / \delta_{0} \in [1,10 \cdot 2^{\frac{m+2}{2}}]$ holds.
Hence in order to prove \eqref{a-48}, it is sufficient to show
\begin{equation}\label{a-83}
\phi_{m}(T)^{\frac{1}{q_{0}}}\|t^{\frac{\alpha}{q_{0}}} v\|_{L^{q_{0}}\left(D_{t, x}^{T, \delta}\right)} \leq C\|t^{-\frac{\alpha}{q_{0}}} F\|_{L^{\frac{q_{0}}{q_{0}-1}}(D_{t, x}^{\delta_0})}.
\end{equation}
Since $\phi_{m}(T)$  has a positive upper and lower bound, it suffices to derive
\begin{equation}\label{a-84}
\|t^{\frac{\alpha}{q_{0}}} v\|_{L^{q_{0}}(D_{t, x}^{T, \delta})} \leq C\|t^{-\frac{\alpha}{q_{0}}} F\|_{L^{\frac{q_{0}}{q_{0}-1}}(D_{t, x}^{\delta_0})}.
\end{equation}
In fact, combining Lemma \ref{lem4} and Lemma \ref{lem5} yields \eqref{a-84} immediately. Then the
related proof for the small $\delta$ case is completed.

Next we deal with case (ii).
In this case, in order to prove \eqref{a-48}, it  suffices to show
\begin{equation}\label{a-85}
\delta^{\frac{1}{q_{0}}}\|t^{\frac{\alpha}{q_{0}}} v\|_{L^{q_{0}}\left(D_{t, x}^{T, \delta}\right)} \leq C \delta_{0}^{\frac{1}{q_{0}}}\|t^{-\frac{\alpha}{q_{0}}} F\|_{L^{\frac{q_{0}}{q_{0}-1}}(D_{t, x}^{\delta_0})},
\end{equation}
where
\begin{equation}\label{a-86}
v=\int_{1}^{2} \int_{\mathbb{R}^{2}} \int_{\mathbb{R}^{2}} e^{\left\{i(x-y) \cdot \xi -\left[\phi_{m}(t)-\phi_{m}(s)\right]|\xi|\right\}} a(t, s, \xi) F(s, y) \mathrm{d} \xi \mathrm{d} y \mathrm{d} s
\end{equation}
with
$$
|a(t, s, \xi)| \leq\left(1+\phi_m(t)|\xi|\right)^{-\frac{m}{2(m+2)}}\left(1+\phi_m(s)|\xi|\right)^{-\frac{m}{2(m+2)}}|\xi|^{-\frac{2}{m+2}}.
$$
As in the proof of Lemma \ref{lem5},  we  denote the dyadic decomposition of $v$ by
\begin{equation}\label{a-87}
v_{\lambda}=\int_{\left[1, 2\right] \times \mathbb{R}^{2}} \int_{\mathbb{R}^{2}} e^{i\left\{(x-y) \cdot \xi-\left[\phi_{m}(t)-\phi_{m}(s)\right]|\xi|\right\}} a_{\lambda}(t, s, \xi) F(s, y) d \xi \mathrm{d} s \mathrm{d} y,
\end{equation}
where $a_{\lambda}(t, s, \xi)=\vp(|\xi| / \lambda) a(t, s, \xi)$ and $\vp$ is defined in \eqref{chi}.
Then we only need to show \eqref{a-85} for $v_{\lambda}$. Set
$$
\tilde{T}_{t, s} F(x)=\int_{\mathbb{R}^{2}} \int_{\mathbb{R}^{2}} e^{i\left\{(x-y) \cdot \xi+\left[\phi_{m}(t)-\phi_{m}(s)\right]|\xi|\right\}} a_{\lambda}(t, s, \xi) F(s, y) d \xi d y.
$$
Then by an analogous analysis in the proof process of \eqref{d-2}, one has
\begin{equation}\label{a-88}
\|t^{\frac{\alpha}{q_{0}}} \tilde{T}_{t, s} F(\cdot)\|_{L^{q_{0}}\left(\mathbb{R}^{2}\right)} \leq C|t-s|^{-\frac{(m+1)q_0-2 \al}{(m+3)q_0}}\|s^{-\frac{\alpha}{q_{0}}} F(s, \cdot)\|_{L^{p_0}\left(\mathbb{R}^{2}\right)}.
\end{equation}
It follows from the Young's inequality that
\begin{equation}\label{a-89}
\begin{aligned}
\|t^{\frac{\alpha}{q_{0}}} v_{\lambda}\|_{L^{q_{0}}(D_{t, x}^{T, \delta})} & \leq C\|\int \|t^{\frac{\alpha}{q_{0}}} \tilde{T}_{t, s} F(\cdot)\|_{L^{q_{0}}\left(\mathbb{R}^{2}\right)} \mathrm{d} s\|_{L^{q_{0}}(\left[T, 2T\right] )}  \\
& \leq C\|\int_{I}|t-s|^{-\frac{(m+1)q_0-2 \al}{(m+3)q_0}}\| s^{-\frac{\alpha}{q_{0}}}F(s, \cdot)\|_{L^{p_{0}}(\mathbb{R}^{2})} d s\|_{L_t^{q_{0}}}  \\
& \leq C\||s|^{-\frac{2}{q_{0}}}\|_{L^{\frac{q_{0}}{2}}(I)}\|s^{-\frac{\alpha}{q_{0}}}F\|_{L^{p_{0}}(\left[1, 2\right] \times \mathbb{R}^{2})}\\
& \leq C \left(\frac{\delta_{0}}{\delta}\right)^{\frac{1}{q_{0}}}\|s^{-\frac{\alpha}{q_{0}}}F\|_{L^{p_{0}}(\left[1, 2\right] \times \mathbb{R}^{2})},
\end{aligned}
\end{equation}
here we have used the facts that $\frac{(m+1)q_0-2 \al}{(m+3)q_0}=\frac{2}{q_0}=1-\left(\frac{1}{p_{0}}-\frac{1}{q_{0}}\right)$
holds and the length of the related integral interval $I$ is controlled by $c\delta_0$. Therefore,
in light of  \eqref{a-89}, then \eqref{a-85} and further \eqref{a-48} are proved.

\subsection{Proof of Theorem~\ref{th2-2} for the endpoint case $q=2$}\label{Sub-4.2}

In this subsection, we shall prove the other endpoint estimate \eqref{a-8} for $q=2$ in Theorem \ref{th2-2}.
Through the analogous proof procedure in Section 5A of \cite{HWY4}, by omitting the related superscript $j$,
it  suffices to show
\begin{equation}\label{a-93}
\phi_{m}(T)^{-\frac{1}{2}+\frac{m-\alpha}{m+2}-\frac{\nu}{2}} \delta^{-\frac{1}{2}+\frac{m-\alpha}{m+2}+\frac{\nu}{2}}\|t^{\frac{\alpha}{2}} v\|_{L^{2}\left(D_{t, x}^{T, \delta}\right)} \leq C \delta_{0}^{\frac{1}{2}}\|s^{-\frac{\alpha}{2}} F\|_{L^{2}\left(\left[1, 2\right] \times \mathbb{R}^{2}\right)} ,
\end{equation}
where $D_{t, x}^{T, \delta}$ is defined in \eqref{a-47}, $\operatorname{supp} F\subseteq D_{s, y}^{\delta_{0}}$,
and $\delta \geq \delta_{0}$.
As in \eqref{a-45}, we decompose $v=v^1+v^0$.

\vskip 0.2 true cm
\subsubsection{\large{Estimate of $v^{1}$}}\label{S-1}
\vskip 0.1 true cm

Note that  $T \geq \phi_{m}^{-1}\left(10 \phi_{m}(2)\right)$ holds for $(t, x) \in \operatorname{supp} v^{1}$. As in Subsection \ref{Sub-4.1},  we shall divide the proof of \eqref{a-93} for $v^1$ into the following two parts with respect to the range of $\delta$.
\vskip 0.2 true cm
\textbf{Case 1. The case of $\delta\geq 10\phi_m(2)$}
\vskip 0.1 true cm

As in Case (ii) of  Subsection \ref{subsub-1}, we now set
$$
v=\sum_{j=-\infty}^{\infty} v_{j}=\sum_{j=-\infty}^{\infty} \iint_{D_{s, y}^{\delta_{0}}} K_{j}(t, x ; s, y) F(s, y) \mathrm{d} y \mathrm{d} s,
$$
where
$$
K_{j}(t, x ; s, y)=\int_{\mathbb{R}^{2}} e^{i\left((x-y) \cdot \xi-\left(\phi_{m}(t)-\phi_{m}(s)\right)|\xi|\right)}
\varphi(\frac{|\xi|}{2^{j}}) a(t, s, \xi)  \mathrm{d} \xi.
$$
Similarly, it follows from \eqref{a-60} with $N=\frac{m+1}{m+2}$ and H\"{o}lder's inequality that
for any small fixed constant $\ve>0$,
\begin{equation}\label{SZH}
\begin{aligned}
|t^{\frac{\alpha}{2}}v_{j}|& \leq\|t^\f{\al}{2}K_{j}(t, x ; s, y)\left(\phi_{m}(t)+|x|\right)^{\frac{1}{2}}\left(\phi_{m}(t)-|x|\right)^{\frac{1}{2}-\frac{1}{m+2}-\epsilon} s^{\frac{\alpha}{2}}\|_{L^{2}(D_{s, y}^{\delta_{0}})} \\
& \qquad\times\|\left(\phi_{m}(t)+|x|\right)^{-\frac{1}{2}}\left(\phi_{m}(t)-|x|\right)^{-\frac{1}{2}+\frac{1}{m+2}+\epsilon} s^{-\frac{\alpha}{2}} F(s, y)\|_{L^{2}(D_{s, y}^{\delta_{0}})}\\
& \leq C \delta_{0}^{\frac{1}{2}} \phi_{m}(t)^{-\frac{m}{2(m+2)}+\frac{\alpha}{m+2}}\left(\phi_{m}(t)-|x|\right)^{-\frac{1}{2}-\epsilon}\left(\delta \phi_{m}(T)\right)^{-\frac{1}{2}} \delta^{\frac{1}{m+2}+\epsilon}\|s^{-\frac{\alpha}{2}} F\|_{L^{2}\left(D_{s, y}^{\delta_{0}}\right)}.
\end{aligned}
\end{equation}
On the other hand,
\begin{equation}\label{SZH-1}
\begin{aligned}
&\|\phi_{m}(t)^{-\frac{m}{2(m+2)}+\frac{\alpha}{m+2}}\left(\phi_{m}(t)-|x|\right)^{-\frac{1}{2}-\epsilon}\|_{L^{2}(D_{t, x}^{T, \delta})}\\
&\quad \leq C\big(\int_{\frac{T}{2}}^{T} \phi_{m}(t)^{\frac{2(1+\alpha)}{m+2}} \mathrm{d} t\big)^{\frac{1}{2}}
\leq C T^{1+\frac{\alpha}{2}}.
\end{aligned}
\end{equation}
Then for $\phi_{m}(t) \geq \delta \geq 10 \phi_{m}(2)$, we have
\begin{equation}\label{a-94}
\begin{aligned}
& \phi_{m}(T)^{-\frac{1}{2}+\frac{m-\alpha}{m+2}-\frac{\nu}{2}} \delta^{-\frac{1}{2}+\frac{m-\alpha}{m+2}+\frac{\nu}{2}}\|t^{\frac{\alpha}{2}} v\|_{L^{2}\left(D_{t, x}^{T, \delta}\right)} \\
& \leq \phi_{m}(T)^{-\frac{1}{2}+\frac{m-\alpha}{m+2}-\frac{\nu}{2}} \delta^{-\frac{1}{2}+\frac{m-\alpha}{m+2}+\frac{\nu}{2}} \delta_{0}^{\frac{1}{2}}\left(\delta \phi_{m}(T)\right)^{-\frac{1}{2}} \delta^{\frac{1}{m+2}+\epsilon} T^{1+\frac{\alpha}{2}}\|t^{-\frac{\alpha}{2}} F\|_{L^{2}(D_{t, x}^{\delta_{0}})} \\
& \leq C \delta_{0}^{\frac{1}{2}} \delta^{-\frac{\alpha+1}{m+2}+\epsilon}\|t^{-\frac{\alpha}{2}} F\|_{L^{2}(D_{t, x}^{\delta_{0}})}\leq C \delta_{0}^{\frac{1}{2}} \|t^{-\frac{\alpha}{2}} F\|_{L^{2}(D_{t, x}^{\delta_{0}})} .
\end{aligned}
\end{equation}
In the last inequality, we have used the fact  of $
-\frac{\alpha+1}{m+2}+\epsilon<0$ for $\alpha>-1$ and small $\epsilon>0$. Thus \eqref{a-93} is proved.

\vskip 0.2 true cm
\textbf{Case 2. $\delta_{0} \leq \delta \leq 10 \phi_{\mathrm{m}}(2)$}
\vskip 0.2 true cm

As in \eqref{a-71} of Subsection  \ref{subsub-1},  we  take
$$
t^{\frac{\alpha}{2}} v=\int_{1}^{t} \int_{\mathbb{R}^{2}} \int_{\mathbb{R}^{2}} e^{i\left\{(x-y) \cdot \xi-\left[\phi_{m}(t)-\phi_{m}(s)\right]|\xi|\right\}} t^{\frac{\alpha}{2}}
\phi_{m}(t)^{-\frac{m}{2(m+2)}}|\xi|^{-1} F(s, y) \mathrm{d} y \mathrm{d} \xi \mathrm{d} s .
$$
Write $v=v_{0}+v_{1}$ with
$$
t^{\frac{\alpha}{2}} v_{1}=\int_{1}^{t} \int_{\mathbb{R}^{2}} \int_{\mathbb{R}^{2}} e^{i\left\{(x-y) \cdot \xi-\left[\phi_{m}(t)-\phi_{m}(s)\right]|\xi|\right\}} \phi_{m}(t)^{-\frac{m}{2(m+2)}}
t^{\frac{\alpha}{2}} \frac{1-\rho_1(\delta \xi)}{|\xi|} F(s, y) \mathrm{d} y \mathrm{d} \xi \mathrm{d} s,
$$
where $\rho_1\in C^{\infty}_0(\R^2)$ and $\rho_1=1$ near the origin. By setting $\phi_{m}(s)=|y|+\tau$ and
using H\"{o}lder's inequality, we arrive at
$$
\begin{aligned}
|t^{\frac{\alpha}{2}} v_{1}| & \leq C \delta_{0}^{\frac{1}{2}}(\int_{\delta_{0}}^{2 \delta_{0}} \mid \int_{\mathbb{R}^{2}} \int_{\mathbb{R}^{2}} e^{i\{(x-y) \cdot \xi-|\phi_{m}(t)-|y|-\tau]|\xi|\}} \phi_{m}(t)^{-\frac{m}{2(m+2)}}\\
&\qquad\qquad\qquad\quad   \times t^{\frac{\alpha}{2}} \frac{1-\rho_1(\delta \xi)}{|\xi|} F(\phi_{m}^{-1}(|y|+\tau), y) \mathrm{d} y \mathrm{d} \xi|^{2} \mathrm{d} \tau)^{\frac{1}{2}} \\
& =: C \delta_{0}^{\frac{1}{2}}(\int_{\delta_{0}}^{2 \delta_{0}}|\tilde{F}(t, \tau, x)|^{2} \mathrm{d} \tau)^{\frac{1}{2}},
\end{aligned}
$$
where $\frac{1-\rho_1(\delta \xi)}{|\xi|}=O(\delta)$. Then one can apply the method of \eqref{a-71} to obtain
\begin{equation}\label{aa-95}
\begin{aligned}
\|\tilde{F}(t, \tau, \cdot)\|_{L^{2}\left(\mathbb{R}^{2}\right)} &\leq t^{\frac{\alpha}{2}} \phi_{m}(t)^{-\frac{m}{2(m+2)}}\left(\phi_{m}(t)^{\nu} \delta^{-(\nu+1)}\right)^{\frac{1}{2}}\delta\|s^{-\f{\al}{2}}F(s, \cdot)\|_{L^{2}\left(\mathbb{R}^{2}\right)}\\ &= C\phi_{m}(t)^{\frac{\nu}{2}-\frac{m}{2(m+2)}} t^{\frac{\alpha}{2}} \delta^{-\frac{\nu+1}{2}+1}\|s^{-\f{\al}{2}}F(s, \cdot)\|_{L^{2}\left(\mathbb{R}^{2}\right)},
\end{aligned}
\end{equation}
which yields
\begin{equation}\label{a-95}
\|t^{\frac{\alpha}{2}} v_{1}\|_{L^{2}\left(D_{t, x}^{T, \delta}\right)} \leq C \delta_{0}^{\frac{1}{2}} \delta^{-\frac{\nu+1}{2}+1} \phi_{m}(T)^{\frac{\nu}{2}-\frac{m-2}{2(m+2)}} T^{\frac{\alpha}{2}}\|s^{-\f{\al}{2}}F\|_{L^{2}\left(\left[1, 2\right] \times \mathbb{R}^{2}\right)}.
\end{equation}
Therefore, by $\delta \leq 10 \phi_{m}(2)$, the estimate \eqref{a-93} for $v_{1}$ can be gotten.

We next treat $v_{0}$. Notice that
\begin{equation}\label{v01}
\begin{aligned}
& |\int_{|\xi| \leq 1} e^{i\left\{(x-y) \cdot \xi-\left[\phi_m(t)-\phi_m(s)\right]|\xi|\right\}} \phi_m(t)^{-\frac{m}{2(m+2)}} t^{\frac{\alpha}{2}} \frac{\rho_1(\delta \xi)}{|\xi|} \mathrm{d} \xi|\leq C(1+|x-y|)^{-\frac{1}{2}} \phi_m(t)^{-\frac{m}{2(m+2)} }t^{\frac{\alpha}{2}},
\end{aligned}
\end{equation}
here we have used the fact of $\phi_{m}(t)-\phi_{m}(s) \geq|x-y|$ for  $(s, y) \in \operatorname{supp} F$
and $(t, x) \in \operatorname{supp} v$, which has been proved in (5-7) of \cite{HWY4}.
In addition, the corresponding inequality \eqref{a-93}  holds when $t^{\frac{\alpha}{2}} v$ is replaced by
$$
t^{\frac{\alpha}{2}} v_{01}=\int_{1}^{2} \int_{\mathbb{R}^{2}} \int_{|\xi| \leq 1} e^{i\left\{(x-y) \cdot \xi-\left[\phi_{m}(t)-\phi_{m}(s)\right]|\xi|\right\}}t^{\frac{\alpha}{2}}
\phi_{m}(t)^{-\frac{m}{2(m+2)}} \frac{\rho_1(\delta \xi)}{|\xi|} F(s, y) \mathrm{d} y \mathrm{d} \xi \mathrm{d} s.
$$
By \eqref{v01} and $|y| \leq \phi_{m}(2)$, it follows from  direct computation that
$$
\begin{aligned}
&\|t^{\frac{\alpha}{2}} v_{01}\|_{L_{t,x}^{2}(D_{t, x}^{T, \delta})} \\
&\leq C\|\iint(1+|x-y|)^{-\frac{1}{2}} t^{\frac{\alpha}{2}} \phi_m(t)^{-\frac{m}{2(m+2)}} F(s, y) \mathrm{d} y \mathrm{d} s\|_{L_{t, x}^2(D_{t, x}^{T, \delta})}\\
& \leq C\phi_m(T)^{-\f{m}{2(m+2)}}T^{\f{\alpha}{2}}\|s^{-\frac{\alpha}{2}} F\|_{L_{s, y}^2(D_{s, y}^{\delta_0})}\|(\int_{\frac{T}{2}}^T \int_{\delta\phi_{m}(1) \leq \phi_m(t)-|x| \leq 2 \delta\phi_{m}(1)}(1+|x-y|)^{-1} \mathrm{d} x \mathrm{d} t)^{\frac{1}{2}}\|_{L_{s, y}^2(D_{s, y}^{\delta_0})}\\
&\leq C\phi_m(T)^{-\f{m}{2(m+2)}}T^{\f{\alpha}{2}}\|s^{-\frac{\alpha}{2}} F\|_{L_{s, y}^2(D_{s, y}^{\delta_0})}\times C(\delta_0 \delta T)^{\frac{1}{2}} .
\end{aligned}
$$
Then by $\delta \leq 10 \phi_{m}(2)$, $T \geq \phi_{m}^{-1}\left(10 \phi_{m}(2)\right)$ and $\alpha \leq m$, the left side of \eqref{a-93} can be controlled by
$$
\begin{aligned}
& \phi_{m}(T)^{-\frac{1}{2}+\frac{m-\alpha}{m+2}-\frac{\nu}{2}} \delta^{-\frac{1}{2}+\frac{m-\alpha}{m+2}+\frac{\nu}{2}} \phi_{m}(T)^{\frac{\alpha}{m+2}-\frac{m}{2(m+2)}}\left(\delta_{0} \delta T\right)^{\frac{1}{2}}\|s^{-\frac{\alpha}{2}} F\|_{L^{2}} \\
& \leq C \delta^{\frac{m-\alpha}{m+2}} \delta_{0}^{\frac{1}{2}}\|s^{-\frac{\alpha}{2}} F\|_{L_{s, y}^{2}\left(D_{s, y}^{\delta_{0}}\right)} \leq C \delta_{0}^{\frac{1}{2}}\|s^{-\frac{\alpha}{2}} F\|_{L^{2}\left(\left[1, 2\right] \times \mathbb{R}^{2}\right)}.
\end{aligned}
$$
Note that
\begin{equation}\label{a-96}
\phi_{m}(T)^{-\frac{1}{2}+\frac{m-\alpha}{m+2}-\frac{\nu}{2}} \delta^{-\frac{1}{2}+\frac{m-\alpha}{m+2}+\frac{\nu}{2}}\|t^{\frac{\alpha}{2}} v_{02}\|_{L^{2}\left(D_{t, x}^{T, \delta}\right)} \leq C \delta_{0}^{\frac{1}{2}}\|s^{-\frac{\alpha}{2}} F\|_{L^{2}\left(\left[1, 2\right] \times \mathbb{R}^{2}\right)}
\end{equation}
with
$$
v_{02}=\int_{1}^{2} \int_{\mathbb{R}^{2}} \int_{|\xi| \geq 1} e^{i\left((x-y) \cdot \xi-\left(\phi_{m}(t)-\phi_{m}(s)\right)|\xi|\right)} \phi_{m}(t)^{-\frac{m}{2(m+2)}} \frac{\rho_1(\delta \xi)}{|\xi|} F(s, y) \mathrm{d} y \mathrm{d} \xi \mathrm{d} s
$$
has been shown in Lemma \ref{A4} in Appendix,
then \eqref{a-93} is derived.

\vskip 0.2 true cm
\subsubsection{\large{Estimate of $v^{0}$}}\label{S-2}
\vskip 0.1 true cm

In this case, it suffices to show \eqref{a-93}  for $1\leq\phi_{m}(T) \leq 10 \phi_{m}(2)$ and $\delta_{0} \leq \delta \lesssim \phi_m(T)\leq 10 \phi_{m}(2)$. Then it is only required to prove
\begin{equation}\label{a-97}
\delta^{-\frac{1}{2}+\frac{m-\alpha}{m+2}+\frac{\nu}{2}}\|t^{\frac{\alpha}{2}} v\|_{L^{2}(D_{t, x}^{T, \delta})} \leq C \delta_{0}^{\frac{1}{2}}\|s^{-\frac{\alpha}{2}} F\|_{L^{2}\left(\left[1, 2\right] \times \mathbb{R}^{2}\right)}.
\end{equation}
As in Case 2 of  Subsection \ref{S-1}, set
$$
\begin{aligned}
v&=\int_{1}^{2} \int_{\mathbb{R}^{2}} \int_{\mathbb{R}^{2}} e^{i\left((x-y) \cdot \xi-\left(\phi_{m}(t)-\phi_{m}(s)\right)|\xi|\right)}|\xi|^{-1} F(s, y) \mathrm{d} y \mathrm{d} \xi \mathrm{d} s\\
&=v_{0}+v_{1}
\end{aligned}
$$
with
$$
v_{1}=\int_{1}^{2} \int_{\mathbb{R}^{2}} \int_{\mathbb{R}^{2}} e^{i\left((x-y) \cdot \xi-\left(\phi_{m}(t)-\phi_{m}(s)\right)|\xi|\right)} \frac{1-\rho_1(\delta \xi)}{|\xi|} F(s, y) \mathrm{d} y \mathrm{d} \xi \mathrm{d} s,
$$
where $\rho_1\in C^{\infty}_0(\R^2)$ and $\rho_1=1$ near the origin. By setting $\phi_{m}(s)=|y|+\tau$, then
$$
\begin{aligned}
|t^{\frac{\alpha}{2}}v_{1}| \leq C \delta_{0}^{\frac{1}{2}}\big(\int_{\delta_{0}}^{2 \delta_{0}}\big|\int_{\mathbb{R}^{2}} \int_{\mathbb{R}^{2}} e^{i\left((x-y) \cdot \xi-\left(\phi_{m}(t)-\tau-|y|\right)|\xi|\right)} t^{\frac{\alpha}{2}}\delta F\left(\phi_{m}^{-1}(|y|+\tau), y\right) \mathrm{d} y \mathrm{d} \xi\big|^{2} \mathrm{d} s\big)^{\frac{1}{2}}.
\end{aligned}
$$
It follows from  an analogous proof procedure in \eqref{a-71} with $Rez=0$ that
\begin{equation}\label{a-98}
\|t^{\frac{\alpha}{2}} v_{1}\|_{L^{2}} \leq C \delta_{0}^{\frac{1}{2}} \delta^{-\frac{\nu+1}{2}+1} \phi_{m}(T)^{\frac{\nu}{2}+\frac{\alpha}{m+2}}\|s^{-\frac{\alpha}{2}} F\|_{L^{2}\left(\left[1, 2\right] \times \mathbb{R}^{2}\right)}.
\end{equation}
This, together with $\delta_0\leq\delta \lesssim  10 \phi_{m}(2)$ and $\phi_{m}(T) \geq 1$,
yields the estimate \eqref{a-97} for $v_{1}$.

We next estimate $v_{0}$. Similarly to the treatment in \eqref{v01}, one has
$$
\big|\int_{|\xi| \leq 1} e^{i\left((x-y) \cdot \xi-\left(\phi_{m}(t)-\phi_{m}(s)\right)|\xi|\right)}t^{\frac{\alpha}{2}} \frac{\rho_1(\delta \xi)}{|\xi|} \mathrm{d} \xi\big| \leq C(1+|x-y|)^{-\frac{1}{2}}t^{\frac{\alpha}{2}} .
$$
If we replace $t^{\frac{\alpha}{2}}v$ by
$$
t^{\frac{\alpha}{2}}v_{01}=\int_{1}^{2} \int_{\mathbb{R}^{2}} \int_{|\xi| \leq 1} e^{i\left((x-y) \cdot \xi-\left(\phi_{m}(t)-\phi_{m}(s)\right)|\xi|\right)} t^{\frac{\alpha}{2}}\frac{\rho_1(\delta \xi)}{|\xi|} F(s, y) \mathrm{d} y \mathrm{d} \xi \mathrm{d} s,
$$
due to $|x| \leq \phi_{m}(t) \lesssim 10 \phi_{m}(2)$, then
$$
\begin{aligned}
&\|t^{\frac{\alpha}{2}} v_{01}\|_{L^{2}\left(D_{t, x}^{T, \delta}\right)} \\
&\leq\big\|t^{\frac{\alpha}{2}} \iiint_{|\xi| \leq 1} e^{i\left\{(x-y) \cdot \xi-\left[\phi_m(t)-\phi_m(s)\right]|\xi|\right\}} \frac{\rho_1(\delta \xi)}{|\xi|} \mathrm{d} \xi F(s, y) \mathrm{d} y \mathrm{d} s\big\|_{L^2(D_{t, x}^{T, \delta})}\\
&\leq C\|s^{-\frac{\alpha}{2}} F\|_{L_{s, y}^{2}(D_{s, y}^{\delta_{0}})}\big\|\big(\int_{T}^{2T} \int_{\delta\phi_m(1) \leq \phi_{m}(t)-|x| \leq 2 \delta\phi_m(1)}(1+|x-y|)^{-1} \mathrm{d} x \mathrm{d} t\big)^{\frac{1}{2}}\big\|_{L_{s, y}^{2}(D_{s, y}^{\delta_{0}})}\\
& \leq C\|s^{-\frac{\alpha}{2}} F\|_{L_{s, y}^{2}(D_{s, y}^{\delta_{0}})}\left(\delta_{0} \delta T\right)^{\frac{1}{2}} .
\end{aligned}
$$
Then
$$
\delta^{-\frac{1}{2}+\frac{m-\alpha}{m+2}+\frac{\nu}{2}}\|t^{\frac{\alpha}{2}} v_{01}\|_{L^{2}\left(D_{t, x}^{T, \delta}\right)}
\leq C \delta_{0}^{\frac{1}{2}}\|s^{-\frac{\alpha}{2}} F\|_{L^{2}\left(\left[1, \infty\right) \times \mathbb{R}^{2}\right)}.
$$
Note that
$$
t^{\frac{\alpha}{2}} v_{02}=\int_{1}^{2} \int_{\mathbb{R}^{2}} \int_{|\xi| \geq 1} e^{i\left((x-y) \cdot \xi-\left(\phi_{m}(t)-\phi_{m}(s)\right)|\xi|\right)}t^{\frac{\alpha}{2}}  \frac{\rho_1(\delta \xi)}{|\xi|} F(s, y) \mathrm{d} y \mathrm{d} \xi \mathrm{d} s.
$$
Due to $\delta \lesssim \phi_{m}(T) \leq 10 \phi_{m}(2)$, then it follows from the estimate of $v_{02}$ in
Case 2 of Subsection \ref{S-1} that
\begin{equation}\label{a-99}
\delta^{-\frac{1}{2}+\frac{m-\alpha}{m+2}+\frac{\nu}{2}}\|t^{\frac{\alpha}{2}} v_{02}\|_{L^{2}\left(D_{t, x}^{T, \delta}\right)} \leq C \delta_{0}^{\frac{1}{2}}\|s^{-\frac{\alpha}{2}} F\|_{L^{2}\left(\left[1, 2\right] \times \mathbb{R}^{2}\right)}.
\end{equation}

Combining the estimates on $v^{0}$ and $v^{1}$ in Subsection \ref{S-1} and Subsection \ref{S-2}, respectively,
then \eqref{a-8} and further Theorem \ref{th2-2} are proved.

\section{Angular mixed-norm Strichartz estimates for $\partial_t^2 u-t^m\Delta u =t^{m-p+1}|u|^p$}\label{E-5}

At first, we establish some angular mixed-norm Strichartz estimates for the 2-D inhomogeneous Tricomi equation.
Let $w$ solve
\begin{equation}\label{q-5}
\begin{cases}\partial_t^2 w-t^{m} \Delta w=F(t, x), &(t,x)\in [1,\infty)\times \R^2,\\
w(1, x)=0,\quad \p_t w(1, x)=0.
\end{cases}
\end{equation}
Note that
\begin{equation}\label{equ:3.40}
t^{\f{m-p+1}{p+1}} w(t,x)=(AF)(t,x)\equiv
\int_1^t\int_{\R^2}e^{i\left(x\cdot\xi-(\phi_m(t)-\phi_m(\tau))|\xi|\right)}
a(t,\tau,\xi)\hat{F}(\tau,\xi)\,\mathrm{d}\xi \mathrm{d}\tau,
\end{equation}
where
\[
\bigl| \partial_\xi^\kappa a(t,\tau,\xi)\bigr|\leq
Ct^{\f{m-p+1}{p+1}} \left(1+\phi_m(t)|\xi|\right)^{-\f{m}{2(m+2)}}
\left(1+\phi_m(\tau)|\xi|\right)^{-\f{m}{2(m+2)}}|\xi|^{-\frac{2}{m+2}-|\kappa|}.
\]
By a similar argument in Lemma \ref{lem3.4-L}, we can conclude that if $\hat{F}(\tau, \xi)=0$ when $|\xi| \notin\left[\frac{1}{2}, 1\right]$, then
\begin{equation}\label{q-023}
\big\|t^{\f{m-p+1}{p+1}}\int_{\mathbb{R}} V_{2}\left(t, D_{x}\right) V_{1}\left(\tau, D_{x}\right) F(\tau, x) \mathrm{d} \tau\big\|_{L_{t}^{q} L_{r}^{\nu} L_{\theta}^{2}\left([1,+\infty)\times [0, +\infty)\times [0, 2\pi]\right)} \leq C\|t^{\f{p-m-1}{p+1}}F\|_{L_{t}^{\tilde{q}^{\prime}} L_{r}^{\tilde{\nu}^{\prime}} L_{\theta}^{2}},
\end{equation}
where
\begin{equation}\label{q-23}
\frac{1}{q} \leq \f{m+1}{2}-\frac{m+2}{2\nu}-\f{m-p+1}{p+1}, \quad \frac{1}{\tilde{q}} \leq \f{m+1}{2}-\frac{m+2}{2\tilde{\nu}}-\f{m-p+1}{p+1}
\end{equation}
and
\begin{equation}\label{q-24}
\f{m-p+1}{p+1}+\frac{1}{q}+\frac{m+2}{\nu}=\frac{1}{\tilde{q}^{\prime}}+\frac{m+2}{\tilde{\nu}^{\prime}}+\f{p-m-1}{p+1}-2
\end{equation}
with $\f{1}{\tilde{q}}+\f{1}{\tilde{q}^{\prime}}=1$ and $\f{1}{\tilde{r}}+\f{1}{\tilde{r}^{\prime}}=1$.

Analogously treated as in  \eqref{q-14}-\eqref{q-15}, we can remove the restriction on the support of $\hat{F}$
in \eqref{q-023} and further get the following estimate.
\begin{lemma}\label{lem2}
Let $w$ be the solution of \eqref{q-5}. If $q, \nu, \tilde{q}, \tilde{\nu} \geq 2$ and satisfy \eqref{q-23}-\eqref{q-24}, then
$$
\big\|t^{\f{m-p+1}{p+1}}w\big\|_{L_{t}^{q} L_{r}^{\nu} L_{\theta}^{2}\left([1,+\infty)\times [0, +\infty)\times [0, 2\pi]\right)} \leq C\|t^{\f{p-m-1}{p+1}}F\|_{L_{t}^{\tilde{q}^{\prime}} L_{r}^{\tilde{\nu}^{\prime}} L_{\theta}^{2}\left([1,+\infty)\times [0, +\infty)\times [0, 2\pi]\right)}.
$$
\end{lemma}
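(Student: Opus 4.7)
The plan is to derive Lemma \ref{lem2} from the frequency-localized estimate \eqref{q-023} via a Littlewood-Paley decomposition combined with the parabolic scaling invariance of the generalized Tricomi operator $\p_t^2-t^m\Delta$. First, I would decompose $F=\ss_{j\in\Z}F_j$ dyadically in frequency with $\widehat{F_j}(\tau,\xi)=\vp(2^{-j}|\xi|)\hat F(\tau,\xi)$, where $\vp$ is the bump function defined in \eqref{chi}; letting $w_j$ denote the solution of \eqref{q-5} with source $F_j$, linearity yields $w=\ss_j w_j$.

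Next, for each $j\in\Z$ I would exploit the scaling invariance of $\p_t^2-t^m\Delta$ under $(t,x)\mapsto(\lambda t,\lambda^{(m+2)/2}x)$, which rescales the frequency as $\xi\mapsto\lambda^{(m+2)/2}\xi$. Choosing $\lambda_j=2^{-2j/(m+2)}$ sends the frequency support of $\widehat{F_j}$ into the unit annulus $[1/2,1]$, so estimate \eqref{q-023} applies to the rescaled function. Transferring back to the original variables, the weights $t^{\pm(m-p+1)/(p+1)}$ together with the mixed Lebesgue exponents $(q,\nu,\t q',\t\nu')$ pick up powers of $\lambda_j$; direct computation shows that the algebraic identity \eqref{q-24} is precisely the vanishing-homogeneity condition that makes these powers cancel. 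Hence, uniformly in $j$,
\[
\|t^{\f{m-p+1}{p+1}}w_j\|_{L_t^qL_r^\nu L_\theta^2}\le C\|t^{\f{p-m-1}{p+1}}F_j\|_{L_t^{\t q'}L_r^{\t\nu'}L_\theta^2}.
\]

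Finally, I would reassemble the dyadic pieces using Lemma \ref{lem3-E}. Since $q,\nu\ge 2$ and the innermost $L_\theta^2$ is Hilbertian, Minkowski's inequality commutes the angular norm with the $\ell^2_j$ sum, so Lemma \ref{lem3-E} produces
\[
\|t^{\f{m-p+1}{p+1}}w\|_{L_t^qL_r^\nu L_\theta^2}\le C\bigl(\ss_j\|t^{\f{m-p+1}{p+1}}w_j\|_{L_t^qL_r^\nu L_\theta^2}^2\bigr)^{\f12},
\]
while on the dual side, using $1<\t q',\t\nu'\le 2$ (from $\t q,\t\nu\ge 2$), the second half of Lemma \ref{lem3-E} gives
\[
\bigl(\ss_j\|t^{\f{p-m-1}{p+1}}F_j\|_{L_t^{\t q'}L_r^{\t\nu'}L_\theta^2}^2\bigr)^{\f12}\le C\|t^{\f{p-m-1}{p+1}}F\|_{L_t^{\t q'}L_r^{\t\nu'}L_\theta^2}.
\]
Chaining these three inequalities closes the proof.

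The main obstacle I anticipate is the homogeneity bookkeeping in the scaling step: one must verify that \eqref{q-24} exactly absorbs the $\lambda_j$-powers generated by the anisotropic rescaling on both the time weight $t^{\pm(m-p+1)/(p+1)}$ and the three Lebesgue exponents, while the admissibility constraints \eqref{q-23} remain scale-invariant and thus persist after un-rescaling. Once this homogeneity check is in place, the Littlewood-Paley assembly is essentially automatic because the angular $L^2_\theta$ norm commutes with the $\ell^2_j$ square function.
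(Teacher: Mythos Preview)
Your proposal is correct and follows essentially the same approach as the paper, which proves Lemma~\ref{lem2} by the one-line remark that the frequency-localized estimate \eqref{q-023} is ``analogously treated as in \eqref{q-14}--\eqref{q-15}'', i.e., rescaled to each dyadic shell via the parabolic scaling of $\p_t^2-t^m\Delta$ and then summed. Your explicit identification of \eqref{q-24} as the vanishing-homogeneity condition and your use of Lemma~\ref{lem3-E} for the $\ell^2_j$ reassembly spell out exactly what that sketch means.
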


In order to prove Theorem \ref{TH-2}, we  now study some  priori estimates on the following semilinear problem
\begin{equation}\label{q-2}
\left\{ \enspace
\begin{aligned}
&\partial_t^2 u-t^m\Delta u =t^{m-p+1}|u|^p, &&
 (t,x)\in [1,\infty)\times \R^{2},\\
&u(1,x)=u_0(x), \quad \partial_{t} u(1,x)=u_1(x),
\end{aligned}
\right.
\end{equation}
where $0<m\leq \sqrt{2}-1$, $m+2\leq p< \f{3m+7}{m+3}$ and $u_0(x), u_1(x)\in C_0^{\infty}(\Bbb R^2)$.

Let $Z$ be the operator in Theorem \ref{TH-2}, then by Lemma~\ref{lem1} and Lemma \ref{lem2}, we have
\begin{equation}\label{q-25}
\begin{aligned}
& \sum_{|\beta| \leq 1}(\|t^{\f{m-p+1}{p+1}}Z^{\beta} u\|_{L_{t}^{q} L_{r}^{\nu} L_{\theta}^{2}\left([1,+\infty)\times [0, +\infty)\times [0, 2\pi]\right)}+\|t^{\f{m-p+1}{p+1}}Z^{\beta} u\|_{L_{t}^{\infty} \dot{H}^{s}\left([1,\infty)\times \R^2\right)}) \\
& \leq C \sum_{|\beta| \leq 1}(\|Z^{\beta} u_0\|_{\dot{H}^{s}\left(\mathbb{R}^{2}\right)}+\|Z^{\beta} u_1\|_{\dot{H}^{s-\frac{2}{m+2}}\left(\mathbb{R}^{2}\right)}+\|t^{\f{p-m-1}{p+1}}Z^{\beta}( |u|^p)\|_{L_{t}^{\tilde{q}^{\prime}} L_{r}^{\tilde{\nu}^{\prime}} L_{\theta}^{2}\left([1,+\infty)\times [0, +\infty)\times [0, 2\pi]\right)}),
\end{aligned}
\end{equation}
where $q, \nu, \tilde{q}, \tilde{\nu} \geq 2$ satisfy
\begin{equation}\label{q-26}
\f{m-p+1}{p+1}+\frac{1}{q}+\frac{m+2}{\nu}=\frac{1}{\tilde{q}^{\prime}}+\frac{m+2}{\tilde{\nu}^{\prime}}+\f{p-m-1}{p+1}-2=\f{m+2}{2}(1-s),
\end{equation}
\begin{equation}\label{q-27}
\frac{1}{q} \leq \f{m+1}{2}-\frac{m+2}{2\nu}-\f{m-p+1}{p+1}, \quad \frac{1}{\tilde{q}} \leq \f{m+1}{2}-\frac{m+2}{2\tilde{\nu}}-\f{m-p+1}{p+1}
\end{equation}
and
\begin{equation}\label{q-28}
\f{1}{\nu}+\f{m-p+1}{(m+2)(p+1)}>0.
\end{equation}
Set
$$
q=p \tilde{q}^{\prime}, \quad \nu=p \tilde{\nu}^{\prime}.
$$
This, together with \eqref{q-26}, yields
$$
s=1-\frac{2(m-p+1)+4}{(m+2)(p-1)}.
$$
Meanwhile the restriction conditions on $\nu$ and $q$ become
\begin{equation}\label{q-29}
\f{1}{q}+\f{m+2}{\nu}=\f{m-p+3}{p-1}-\f{m-p+1}{p+1},
\end{equation}
\begin{equation}\label{q-30}
\f{1}{q}+\f{m+2}{2\nu}\leq\f{m+1}{2}-\f{m-p+1}{p+1},
\end{equation}
\begin{equation}\label{q-31}
2\leq q \leq 2p,
\end{equation}
\begin{equation}\label{q-32}
2\leq \nu\leq 2p.
\end{equation}
Substituting $q=p \tilde{q}^{\prime}$ and $\nu=p \tilde{\nu}^{\prime}$ into \eqref{q-27} yields
\begin{equation}\label{q-33}
\frac{1}{q}+\frac{m+2}{2 \nu} \geq \frac{3}{2 p}+\f{m-p+1}{(p+1)p}.
\end{equation}

Under the restrictions \eqref{q-28}-\eqref{q-33}, we now investigate the indices in \eqref{q-25}.
Note that by \eqref{q-29}, if $\nu=\nu(p, m)$ is chosen, then $q=q(p, m)$ can be determined.
When $p\in [m+2, \f{3m+7}{m+3})$ with $0<m\leq \sqrt{2}-1$, we next illustrate how to select suitable $(q, \nu)$ in \eqref{q-25}
to fulfill \eqref{q-28}-\eqref{q-33}.

\vskip 0.2 true cm

\textbf{\large{Case 1. $\nu=p$}}

\vskip 0.2 true cm

In this case, it follows from \eqref{q-29} that
\begin{equation}\label{c2}
q=\f{p(p-1)(p+1)}{(p+1)(m+2-mp)+2p(m-p+1)}.
\end{equation}
Due to $0<m\leq \sqrt{2}-1$ and $p< \f{3m+7}{m+3}$, then $q>0$ holds.
At this time, by direct computation we have from \eqref{q-30} that
$$
\f{1}{q}+\f{m+2}{2\nu}\leq\f{m+1}{2}-\f{m-p+1}{p+1}\Leftrightarrow
(p+1)[(m+1)p^2-(2m+5)p-(m+2)]\geq0.
$$
Because of
$$
2p\geq\nu=p \geq m+2>2,
$$
\eqref{q-30} and \eqref{q-32} are fulfilled. In addition, \eqref{q-28} and \eqref{q-33} lead to
$$
\text{$p^2-(2m+3)p-(m+2)<0$ and $(p+1)[(m+3)p-(3m+7)]\leq0$, respectively.}
$$
Then \eqref{q-28} and \eqref{q-33} are satisfied by $p<\f{3m+7}{m+3}$.

On the other hand,
by \eqref{q-31} and \eqref{c2}, $q \geq 2$ and $q\leq 2p$ are equivalent to
\begin{equation}\label{c1}
p^3+(2m+4)p^2-(4m+9)p-(2m+4)\geq0
\end{equation}
and
\begin{equation}\label{c8}
(2m+5)p^2-(4m+8)p-(2m+5)\leq0 \Leftrightarrow p\leq p_1(m),\quad\text{respectively},
\end{equation}
where $p_1(m)$ is the positive root of the quadratic equation
\begin{equation}\label{p1}
(2m+5)p^2-(4m+8)p-(2m+5)=0.
\end{equation}

It is not difficult to verify that for $m>0$ and $p>m+2$, the left hand side function in \eqref{c1} is monotonically increasing with respect to the variable $p$ and
$(m+2)^3+(2m+4)(m+2)^2-(4m+9)(m+2)-(2m+4)>0$ holds.
Then \eqref{c1} is fulfilled. In addition, a direct computation yields that when $0<m\leq \sqrt{2}-1$, $p_1(m)\leq \f{3m+7}{m+3}$ holds; when $0<m<0.092$, $m+2<p_1(m)$ holds; when $0.092\leq m<\sqrt{2}-1$, $p_1(m)<m+2$ holds.

In summary, for $0<m<0.092$ and $m+2<p<p_1(m)$, by choosing $\nu=p$ the restrictions \eqref{q-28}-\eqref{q-33} can be fulfilled simultaneously.
Note that there still exist some gaps  of $p$ and $m$ to satisfy \eqref{q-28}-\eqref{q-33}.
Next, we continue to treat these gaps.

\vskip 0.2 true cm

\textbf{\large{Case 2. $\nu=p+\f{1}{3}$}}

\vskip 0.2 true cm

Note that by $\nu=p \tilde{\nu}^{\prime}$ and $\tilde{\nu}^{\prime}\geq1$, $p$ is the lower bound of $\nu$.
Let $\nu=p+\frac{1}{3}$. Then it follows from \eqref{q-29} that
$$
q=\f{(p-1)(p+1)(3p+1)}{(m+2)(-3p^2+6p+5)}.
$$
Due to $0<m\leq \sqrt{2}-1$ and $p< \f{3m+7}{m+3}$, then $q>0$ holds.
In addition, by $p\geq m+2>2$, we have $2\leq\nu=p+\f{1}{3}\leq2p$. Note that $q \geq 2$ is equivalent to
\begin{equation}\label{c3}
3p^3+(6m+13)p^2-(12m+27)p-(10m+21)\geq0.
\end{equation}
It follows from direct computation that for $m>0$ and $p>m+2$, \eqref{c3} holds since
the left hand side function in
\eqref{c3} is monotonically increasing with respect to the variable $p$ and
$3(m+2)^3+(6m+13)(m+2)^2-(12m+27)(m+2)-(10m+21)>0$.

On the other hand, one has
\begin{equation}\label{c4}
\f{1}{\nu}+\f{m-p+1}{(m+2)(p+1)}>0 \Longleftrightarrow 3p^2-(6m+8)p-(4m+7)<0,
\end{equation}
\begin{equation}\label{c5}
q \leq 2 p \Longleftrightarrow (6m+15)p^3-(12m+31)p^2-(10m+23)p-1\leq0
\end{equation}
and
\begin{equation}\label{c6}
\frac{1}{q}+\frac{m+2}{2 \nu} \geq \frac{3}{2 p}+\f{m-p+1}{(p+1)p} \Longleftrightarrow (3m+9)p^3-(6m+11)p^2-(11m+25)p-(2m+5)\leq0.
\end{equation}

Similarly to the verification of \eqref{c3}, we can obtain that for $0<m\leq\sqrt{2}-1$ and $m+2\leq p<\f{3m+7}{m+3}$,
\eqref{c4} - \eqref{c6} hold. Thus,  \eqref{q-28}, \eqref{q-31} and \eqref{q-33} are fulfilled.
Meanwhile, \eqref{q-30} is equivalent to
$$
G_m(p)=(3m+9)p^3-(2m+3)p^2-(11m+25)p-(6m+13)\geq0.
$$
One can verify that for $0<m\leq\sqrt{2}-1$ and $p>m+2$,  $G_m(p)$ is monotonically increasing with respect to
$p$. Denote ${\bar p}^*(m)$ by the positive root of the cubic equation
\begin{equation}\label{c7}
G_m(p)=0.
\end{equation}
Substituting $p=m+2$ into \eqref{c7} yields that when $0<m<m_1=0.048$, $m+2<{\bar p}^*(m)$ holds; when $m_1< m\leq\sqrt{2}-1$,
${\bar p}^*(m)<m+2$ holds.

In conclusion, either $0<m<m_1$ for ${\bar p}^*(m)\leq p<\f{3m+7}{m+3}$ or $ m_1<m\leq\sqrt{2}-1$ for $m+2\leq p<\f{3m+7}{m+3}$,
by choosing $\nu=p+\f{1}{3}$ the restrictions \eqref{q-28}-\eqref{q-33} are satisfied simultaneously.

Note that by \eqref{p1} and \eqref{c7}, for $m>0$
$${\bar p}^*(m)<p_1(m).$$
Therefore, collecting the analyses in Case 1 and Case 2, one obtains that $\nu$ can be determined as follows:
when $m_1=0.048<m\leq\sqrt{2}-1$, $\nu=p+\f{1}{3}$ is taken for $m+2\leq p<\f{3m+7}{m+3}$;
when $0<m<m_1$, $\nu=p$ for $m+2\leq p<{\bar p}^*(m)$ and $\nu=p+\f{1}{3}$ for ${\bar p}^*(m)\leq p<\f{3m+7}{m+3}$
are taken. Based on the suitable selections of $(q, \nu)$ above,
we will make use of \eqref{q-25} to show the global existence of weak solution $u$ to
problem \eqref{q-2} for $m+2 \leq p < \f{3m+7}{m+3}$ and $0<m\leq \sqrt{2}-1$.

\section{Proofs of Theorem~\ref{thm:global existence-L} and  Theorem~\ref{TH-1}}\label{E-1}
Based on Lemmas \ref{th2-1} and \ref{th2-2}, we next show Theorems \ref{thm:global existence-L} and \ref{TH-1}.
\vskip 0.2 true cm

\begin{proof}
[Proof of Theorem~\ref{thm:global existence-L}]

We shall use a standard Picard iteration  to prove Theorem \ref{thm:global existence-L}.
Let $u_{-1} \equiv 0$ and $u_{k}$ ($k\in\Bbb N_0$) solve
\begin{equation}\label{b-4}
\begin{cases}
\partial_{t}^{2} u_{k}-t^{m} \triangle u_{k}=t^{\alpha}\left|u_{k-1}\right|^{p}, \quad(t, x) \in\left(1, \infty\right) \times \mathbb{R}^{2}, \\
u_{k}\left(1, x\right)=u_0(x) \quad \partial_{t} u_{k}\left(1, x\right)=u_1(x) .
\end{cases}
\end{equation}
For $p \in\left(p_{c r i t}(m, \alpha), p_{\text {conf }}(m, \alpha)\right)$, one can choose a
fixed number $\gamma>0$ such that
$$
\frac{1}{p(p+1)}<\gamma<\frac{m+1}{m+2}-\frac{m+4+2\al}{(m+2)(p+1)}=\f{(m+1)p-(3+2\al)}{(m+2)(p+1)}.
$$
Denote
$$
\begin{aligned}
& M_{k}=\|(\left(\phi_{m}(t)+2\right)^{2}-|x|^{2})^{\gamma} t^{\frac{\alpha}{p+1}} u_{k}\|_{L^{q}\left(\left[1, \infty\right) \times \mathbb{R}^{2}\right)}, \\
& N_{k}=\|(\left(\phi_{m}(t)+2\right)^{2}-|x|^{2})^{\gamma} t^{\frac{\alpha}{p+1}}\left(u_{k}-u_{k-1}\right)\|_{L^{q}\left(\left[1, \infty\right) \times \mathbb{R}^{2}\right)}.
\end{aligned}
$$
Note that $u_{0}$ solves
\begin{equation}\label{b-5}
\begin{cases}
\partial_{t}^{2} u_{0}-t^{m} \triangle u_{0}=0, \quad(t, x) \in\left[1, \infty\right) \times \mathbb{R}^{2},  \\
u_{0}\left(1, x\right)=u_0(x), \quad \partial_{t} u_{0}\left(1, x\right)=u_1(x).
\end{cases}
\end{equation}
By Lemma \ref{th2-1} and the assumption on the initial data in Theorem \ref{thm:global existence-L}, one has
$$
M_{0} \leq C (\|u_0\|_{W^{\frac{m+3}{m+2}+\delta, 1} \left(\mathbb{R}^{2}\right)}+ \|u_1\|_{W^{\frac{m+1}{m+2}+\delta, 1} \left(\mathbb{R}^{2}\right)}) \leq C_{0} \varepsilon_0.
$$
For $j, k \geq 0$, we have
$$
\begin{cases}
\partial_{t}^{2}\left(u_{k+1}-u_{j+1}\right)-t^{m} \Delta\left(u_{k+1}-u_{j+1}\right)=V\left(u_{k}, u_{j}\right)\left(u_{k}-u_{j}\right), \\
\left(u_{k+1}-u_{j+1}\right)\left(1, x\right)=0, \quad \partial_{t}\left(u_{k+1}-u_{j+1}\right)\left(1, x\right)=0,
\end{cases}
$$
where
$$
\left|V\left(u_{k}, u_{j}\right)\right| \leq C t^{\alpha} \left(\left|u_{k}\right|+\left|u_{j}\right|\right)^{p-1}.
$$
By \eqref{equ:1.4}, one can see
$$
\gamma<\f{(m+1)p-(3+2\al)}{(m+2)(p+1)} \quad \text { and } \quad p \gamma>\frac{1}{p+1}.
$$
Then it follows from Lemma \ref{th2-1} and H\"{o}lder's inequality that
\begin{equation}\label{b-6}
\begin{aligned}
\| & \left(\left(\phi_{m}(t)+2\right)^{2}-|x|^{2}\right)^{\gamma} t^{\frac{\alpha}{p+1}}\left(u_{k+1}-u_{j+1}\right) \|_{L^{p+1}\left(\left[1, \infty\right) \times \mathbb{R}^{2}\right)} \\
\leq & C\left\|\left(\left(\phi_{m}(t)+2\right)^{2}-|x|^{2}\right)^{p \gamma} t^{-\frac{\alpha}{p+1}} V\left(u_{k}, u_{j}\right)\left(u_{k}-u_{j}\right)\right\|_{L^{\frac{p+1}{p}}\left(\left[1, \infty\right) \times \mathbb{R}^{2}\right)} \\
\leq & C\big\|\left(\left(\phi_{m}(t)+2\right)^{2}-|x|^{2}\right)^{\gamma} t^{\frac{\alpha}{p+1}}\left(\left|u_{k}\right|+\left|u_{j}\right|\right)\big\|_{L^{p+1}\left(\left[1, \infty\right] \times \mathbb{R}^{2}\right)}^{p-1}  \\
& \times\big\|\left(\left(\phi_{m}(t)+2\right)^{2}-|x|^{2}\right)^{\gamma} t^{\frac{\alpha}{p+1}}\left(u_{k}-u_{j}\right)\big\|_{L^{p+1}\left(\left[1, \infty\right) \times \mathbb{R}^{2}\right)} \\
\leq & C\left(M_{k}+M_{j}\right)^{p-1}\big\|\left(\left(\phi_{m}(t)+2\right)^{2}-|x|^{2}\right)^{\gamma} t^{\frac{\alpha}{p+1}}\left(u_{k}-u_{j}\right)\big\|_{L^{p+1}\left(\left[1, \infty\right) \times \mathbb{R}^{2}\right)} .
\end{aligned}
\end{equation}
By $M_{-1}=0$, \eqref{b-6} implies
$$
M_{k+1} \leq M_{0}+\frac{M_{k}}{2} \quad \text { for } \quad C M_{k}^{p-1} \leq \frac{1}{2},
$$
and
$$
M_{k} \leq 2 M_{0} \quad \text { if } \quad C\left(C_{0} \varepsilon\right)^{p-1} \leq \frac{1}{2}.
$$
Thus the boundedness of $\{M_k\}(k\in\Bbb N_0)$ is obtained when  $\varepsilon_0>0$ is small. Similarly, we have
$$
N_{k+1} \leq \frac{1}{2} N_{k}.
$$
Hence, there exists a function $u$ with $\big((\phi_{m}(t)+2)^{2}-|x|^{2}\big)^{\gamma} t^{\frac{\alpha}{p+1}} u \in L^{p+1}\left(\left[1, \infty\right) \times \mathbb{R}^{2}\right)$ such that $$\big(\left(\phi_{m}(t)+2\right)^{2}-|x|^{2}\big)^{\gamma} t^{\frac{\alpha}{p+1}} u_{k} \rightarrow\big(\left(\phi_{m}(t)+2\right)^{2}-|x|^{2}\big)^{\gamma} t^{\frac{\alpha}{p+1}} u \quad\text{in}~ L^{p+1}\left(\left[1, \infty\right) \times \mathbb{R}^{2}\right).$$
In addition, by $M_k\leq 2C_0\ve_0$ and direct computation, one has that for any compact set $K\subseteq \left[1, \infty\right)\times \R^2$,
$$
\begin{aligned}
& \left\|t^\alpha\left|u_{k+1}\right|^p-t^\alpha\left|u_k\right|^p\right\|_{L^{\frac{p+1}{p}}(K)} \\
& \leq C(K)\big\|(\left(\phi_m(t)+2\right)^2-|x|^2)^{p \gamma} t^{-\frac{\alpha}{p+1}}(t^\alpha|u_{k+1}|^p-t^\alpha|u_k|^p)\big\|_{L^{\frac{p+1}{p}}(K)} \\
& \leq C(K)\big\|((\phi_m(t)+2)^2-|x|^2)^\gamma t^{\frac{\alpha}{p+1}}(u_{k+1}-u_k)\big\|_{L^{p+1}(K)} \\
& \leq C(K) N_{k+1} \leq C 2^{-k} .
\end{aligned}
$$
This means $t^{\alpha}\left|u_{k}\right|^{p} \rightarrow t^{\alpha}|u|^{p}$ in $L_{l o c}^{1}\left(\left[1, \infty\right) \times \mathbb{R}^{2}\right)$. Therefore $u$ is a weak solution of \eqref{YH-4} in the sense of distributions.

On the other hand, in order to prove Theorem~\ref{thm:global existence-L}, it is required to derive
\begin{equation}\label{equ-1.3}
\left(1+\big|\phi_m^2(t)-|x|^2\big|\right)^{\gamma}\le C (\left(\phi_{m}(t)+2\right)^{2}-|x|^{2})^{\gamma}.
\end{equation}
In fact, due to $|x|\leq \phi_m(t)+1$, one then has that for $\phi^2_m(t)<|x|^2$,
$$
\begin{aligned}
1+\big|\phi_m^2(t)-|x|^2\big|&=1+\left(|x|-\phi_m(t) \right)\left(|x|+\phi_m(t) \right)\\
&\lesssim |x|+\phi_m(t) \\
&\lesssim(\phi_m(t)+2-|x|)(\phi_m(t)+2+|x|)\\
&= \left(\phi_{m}(t)+2\right)^{2}-|x|^{2}.
\end{aligned}
$$
While for $\phi^2_m(t)\geq|x|^2$,
$$
\begin{aligned}
\left(\phi_{m}(t)+2\right)^{2}-|x|^{2}&=\phi^2_m(t)-|x|^2+4\phi_m(t)+4\\
&\geq \phi^2_m(t)-|x|^2+1\\
&=1+\big|\phi^2_m(t)-|x|^2\big|.
\end{aligned}
$$
Thus \eqref{equ-1.3} and further Theorem~\ref{thm:global existence-L} are proved.
\end{proof}
\begin{proof}[Proof of Theorem \ref{TH-1}]

The proof procedure is divided into two parts.
\vskip 0.2 true cm

\textbf{Part 1. $0<\mu<1$}

\vskip 0.2 true cm
In this case, set $\mu=\frac{m}{m+2}$ and $T=\frac{2}{m+2} t^{\frac{m+2}{2}}$ with any $m \in(0, \infty)$, then
the equation in  \eqref{YH-4} can be rewritten as
\begin{equation}\label{b-7}
\partial_{T}^{2} u-\Delta u+\frac{m}{(m+2) T} \partial_{T} u=T^{\frac{2(\alpha-m)}{m+2}}|u|^{p}.
\end{equation}
Let $\alpha=m$, then \eqref{b-7} is equivalent to the equation in \eqref{00-1-2} with $\mu=\frac{m}{m+2}$.
Substituting $\alpha=m=\frac{2 \mu}{1-\mu}$ into \eqref{equ:conf} and the expression of $p_2(2,m,\alpha)$ yields that
$$
p_{conf} (2,m, \alpha)=\f{m+2\al+5}{m+1}=\frac{\mu+5}{\mu+1}=p_{conf}(2,\mu)
$$
and $p_{crit}(2,m, \alpha)=p_{s}(2+\mu)$.

Analogously, the condition \eqref{con1} in Theorem \ref{TH-1} comes from condition \eqref{equ:1.4} in Theorem \ref{thm:global existence-L}.
Thus Theorem \ref{TH-1} (i) with $\mu \in(0,1)$ follows immediately from Theorem \ref{thm:global existence-L}.
\vskip 0.2 true cm

\textbf{Part 2. $1<\mu<2$}

\vskip 0.2 true cm

For  $p \in\left(p_{s}(2+\mu), p_{conf }(2,\mu)\right)$, by $v(t, x)=(1+t)^{\mu-1} u(t, x)$,
the equation in \eqref{00-1-2} becomes
\begin{equation}\label{b-8}
\partial_{t}^{2} v-\Delta v+\frac{2-\mu}{1+t} \partial_{t} v=(1+t)^{(p-1)(1-\mu)}|v|^{p}.
\end{equation}
This means that the global existence result of \eqref{00-1-2} can be derived from \eqref{b-7} with $\mu=\frac{m+4}{m+2}$ and $\frac{2(\alpha-m)}{m+2}=(p-1)(1-\mu)$. In this case, one knows
\begin{equation}\label{b-9}
\alpha=1+m-p .
\end{equation}
By $1<p_{s}(2+\mu)<p<p_{conf}(2,\mu)$, \eqref{b-9} implies
$$
1+m-p_{conf }(2,\mu)<\alpha<1+m-p_{s}(2+\mu)<m.
$$
On the other hand, one has
\begin{equation}
p_{\text {conf }}(2, \mu)=\frac{\mu+5}{\mu+1}=\frac{3m+7}{m+3}.
\end{equation}
It follows from  direct computation that $1+m-p_{\text {conf }}(2,\mu)=m-\frac{2(m+2)}{m+3}>-1$ when $m>\sqrt{2}-1$.
Then the global existence in Theorem \ref{TH-1} (i) with $p\in\left(p_{s}(2+\mu), p_{conf}(2,\mu)\right)$ and $1<\mu<2\sqrt{2}-1$
can be proved.

While for the remaining part of $2\sqrt{2}-1 \leq \mu<2$, a direct computation yields that
$$
1+m-p_{s}(2+\mu)=1+\frac{2(2-\mu)}{\mu-1}-p_{s}(2+\mu)>-1
$$
is equivalent to
\begin{equation}\label{b-10}
p_{s}(2+\mu)<p<\frac{2}{\mu-1}\quad \text { for } \quad 2\sqrt{2}-1 \leq \mu<2.
\end{equation}
Therefore, the proof of Theorem \ref{TH-1} (ii) is completed.
\end{proof}

\section{Proofs of Theorem~\ref{thm:global existence-LL} and  Theorem~\ref{TH-2}}\label{E-2}

In this section, based on the estimate \eqref{q-25} and the restriction conditions \eqref{q-28}-\eqref{q-33},
we now prove Theorems~\ref{thm:global existence-LL} and  \ref{TH-2}.

\begin{proof}
[Proof of Theorem~\ref{thm:global existence-LL}]
Let $u_{-1}=0$ and  $u_{k}(k\in\Bbb N_0)$  solve
\begin{equation}\label{q-34}
\left\{ \enspace
\begin{aligned}
&\partial_t^2 u_k-t^m\Delta u_k =t^{m-p+1}|u_{k-1}|^p, &&
  (t,x)\in [1,\infty)\times \R^{2},\\
&u_k(1,x)=u_0(x), \quad \partial_{t} u_k(1,x)=u_1(x),
\end{aligned}
\right.
\end{equation}
where $0<m\leq \sqrt{2}-1$, $m+2\leq p<\f{3m+7}{m+3}$, and $(u_0,u_1)\in C_0^{\infty}(\Bbb R^2)$.

Denote
$$
\begin{aligned}
&M_{k}=\sum_{|\beta| \leq 1}(\|t^{\f{m-p+1}{p+1}}Z^{\beta} u\|_{L_{t}^{q} L_{r}^{\nu} L_{\theta}^{2}\left([1,+\infty)\times [0, +\infty)\times [0, 2\pi]\right)}+\|t^{\f{m-p+1}{p+1}}Z^{\beta} u\|_{L_{t}^{\infty} \dot{H}^{s}\left([1,\infty)\times \R^2\right)}), \\
&N_{k}=\|t^{\f{m-p+1}{p+1}}(u_{k}-u_{k-1})\|_{L_{t}^{q} L_{r}^{\nu} L_{\theta}^{2}\left([1,+\infty)\times [0, +\infty)\times [0, 2\pi]\right)},
\end{aligned}
$$
where $(q,\nu)$ satisfies \eqref{q-28}-\eqref{q-33}.

For $k=0$, it follows from \eqref{q-25} and \eqref{prop:1} that
$$
M_{0} \leq C_0\varepsilon_{0}.
$$

For $k \geq 1$, by \eqref{q-25} and $m-p+1+\f{p-m-1}{p+1}=p\cdot\f{m-p+1}{p+1}$, we can conclude
\begin{equation}\label{q-36}
M_{k} \leq C_0 \varepsilon_0+C\sum_{|\beta| \leq 1}\|Z^{\beta}(|t^{\f{m-p+1}{p+1}}u_{k-1}|^{p})\|_{L_{t}^{\tilde{q}^{\prime}}L_{r}^{\tilde{\nu}^{\prime}} L_{\theta}^{2}([1,+\infty)\times [0, +\infty)\times [0, 2\pi])}.
\end{equation}
Note that for $f(x)=f(r, \theta)\left(x \in \mathbb{R}^{2}\right)$ with $\sum_{|\beta| \leq 1}\left|Z^{\beta} f\right| \in L_{\theta}^{2}$,
one has
$$
\sum_{|\beta| \leq 1}|Z^{\beta}(|t^{\f{m-p+1}{p+1}}f|^{p})|\leq C|t^{\f{m-p+1}{p+1}}f|^{p-1}
\sum_{|\beta| \leq 1}|Z^{\beta}(t^{\f{m-p+1}{p+1}}f)|.
$$
Due to $\partial_{\theta}=x_{1} \partial_{2}-x_{2} \partial_{1} \in Z$, then
\begin{equation}\label{q-37}
\|t^{\f{m-p+1}{p+1}}f(r, \cdot)\|_{L_{\theta}^{\infty}}
\leq C \sum_{|\beta| \leq 1}\|Z^{\beta}(t^{\f{m-p+1}{p+1}} f(r, \cdot))\|_{L_{\theta}^{2}}.
\end{equation}
In addition,
\begin{equation}\label{q-38}
\begin{aligned}
\|\||t^{\f{m-p+1}{p+1}}u_{k-1}|^{p-1}& \sum_{|\beta| \leq 1}|Z^{\beta}(t^{\f{m-p+1}{p+1}} u_{k-1})|\|_{L_{\theta}^{2}}\|_{L_{t}^{\tilde{q}^{\prime}} L_{r}^{\tilde{\nu}^{\prime}}}\\
& \leq C\| \|t^{\f{m-p+1}{p+1}}u_{k-1}\|_{L_{\theta}^{\infty}}^{p-1} \sum_{|\beta| \leq 1}\| Z^{\beta}(t^{\f{m-p+1}{p+1}} u_{k-1})\|_{L_{\theta}^{2}}\|_{L_{t}^{\tilde{q}^{\prime}} L_{r}^{\tilde{\nu}^{\prime}}} \\
& \leq C\|(\sum_{|\beta| \leq 1}\|Z^{\beta}(t^{\f{m-p+1}{p+1}} u_{k-1})\|_{L_{\theta}^{2}})^{p-1} \sum_{|\beta| \leq 1}\| Z^{\beta}(t^{\f{m-p+1}{p+1}} u_{k-1})\|_{L_{\theta}^{2}}\|_{L_{t}^{\tilde{q}^{\prime}} L_{r}^{\tilde{\nu}^{\prime}}} \\
& \leq C\|(\sum_{|\beta| \leq 1}\|Z^{\beta}(t^{\f{m-p+1}{p+1}} u_{k-1})\|_{L_{\theta}^{2}})^{p}\|_{L_{t}^{\tilde{q}^{\prime}} L_{r}^{\tilde{\nu}^{\prime}}} \\
& \leq C(\sum_{|\beta| \leq 1}\|Z^{\beta}(t^{\f{m-p+1}{p+1}} u_{k-1})\|_{L_{t}^{q}L_{r}^{\nu}L_{\theta}^{2}})^{p}, \quad\text{where $q=p \tilde{q}^{\prime}$ and $\nu=p \tilde{\nu}^{\prime}$}.
\end{aligned}
\end{equation}
By the assumption $M_{k-1} \leq 2 C_{0} \varepsilon_0$ and the smallness of $\varepsilon_0$, we arrive at
\begin{equation}\label{q-39}
M_{k} \leq C_0 \varepsilon_0+C_1 M_{k-1}^{p}\leq C_{0} \varepsilon_0+C_{1} M_{k-1}^{p-1} M_{k-1}\leq 2 C_0 \varepsilon_0.
\end{equation}
Thus the boundedness of $\{M_k\}(k\in\Bbb N_0)$ is obtained when  $\varepsilon_0>0$ is small.

On the other hand, it follows from \eqref{q-39} and direct computation as in \eqref{q-38} that for small $\varepsilon_0>0$,
$$
N_{k} \leq CN_{k-1}(M_{k-1}+M_{k-2})^{p-1}\leq \frac{1}{2} N_{k-1}.
$$
Then there exists a function $u \in L_{t}^{q} L_{r}^{\nu} L_{\theta}^{2}$ such that $u_{k} \rightarrow u$ in $L_{t}^{q} L_{r}^{\nu} L_{\theta}^{2}$. Moreover,
$$
\|t^{m-p+1}(|u_k|^{p}-|u_{k-1}|^{p})\|_{L_{t}^{\tilde{q}^{\prime}} L_{r}^{\tilde{\nu}^{\prime}} L_{\theta}^{2}} \leq C\|t^{\f{m-p+1}{p+1}}( u_k-u_{k-1}) \|_{L_{t}^{q} L_{r}^{\nu} L_{\theta}^{2}} \rightarrow 0.
$$
This means $t^{m-p+1}|u_{k}|^{p} \rightarrow t^{m-p+1}|u|^{p}$ in $L_{\mathrm{loc}}^{1}\left([1,+\infty)\times [0, +\infty)\times [0, 2\pi]\right)$. Therefore, $u$ is a global weak solution of \eqref{q-2} and admits the regularities in Theorem \ref{thm:global existence-LL}.
\end{proof}

\begin{proof}
[Proof of Theorem~\ref{TH-2}]

As in Part 2 of Section \ref{E-1}, Theorem~\ref{TH-2} can be derived from Theorem~\ref{thm:global existence-LL}.
with $m=\f{2(2-\mu)}{\mu-1}$.

Indeed, it follows from direct computations that for $m=\f{2(2-\mu)}{\mu-1}$,
\begin{equation}\label{c11}
m+2\leq p<\f{3m+7}{m+3}\Leftrightarrow \f{2}{\mu-1}\leq p< \f{\mu+5}{\mu+1}=p_{conf}(2,\mu),
\end{equation}
\begin{equation}
s=1-\f{2(m-p+1)+4}{(m+2)(p-1)}=1-\frac{\mu+1-p(\mu-1)}{p-1}, \quad t^{\f{m-p+1}{p+1}}=t^{\f{3-\mu-p(\mu-1)}{(\mu-1)(p+1)}}.
\end{equation}
In addition, the restriction conditions \eqref{q-28}-\eqref{q-33} can be rewritten as
\begin{equation}\label{WS}
\begin{cases}\f{1}{\nu}+\f{3-\mu-p(\mu-1)}{2(p+1)}>0,\\
\f{1}{q}+\f{2}{\nu(\mu-1)}=\f{\mu+1-p(\mu-1)}{(p-1)(\mu-1)}-\f{3-\mu-p(\mu-1)}{(p+1)(\mu-1)},\\
\f{1}{q}+\f{1}{\nu(\mu-1)}\leq\f{3-\mu}{2(\mu-1)}-\f{3-\mu-p(\mu-1)}{(p+1)(\mu-1)},\\
2\leq q \leq 2p,\\
2\leq \nu\leq 2p,\\
\f{1}{q}+\f{1}{\nu(\mu-1)}\geq\f{3}{2p}+\f{3-\mu-p(\mu-1)}{p(p+1)(\mu-1)}.
\end{cases}
\end{equation}

Next we look for the suitable values of $\nu$ in \eqref{WS}. Recall that when $m_1=0.048<m\leq\sqrt{2}-1$,
the positive root ${\bar p}^*(m)$  of \eqref{c7} satisfies ${\bar p}^*(m)<m+2$ due to
\begin{equation}\label{c9}
\begin{aligned}
&(3m+9)(m+2)^3-(2m+3)(m+2)^2-(11m+25)(m+2)-(6m+13)\\
&=(m+3)(3m^3+16m^2+20m-1)>0.
\end{aligned}
\end{equation}

Substituting $m=\f{2(2-\mu)}{\mu-1}$ into \eqref{c9} yields
\begin{equation}\label{c10}
\begin{aligned}
&(\f{2(2-\mu)}{\mu-1}+3)\left(3(\f{2(2-\mu)}{\mu-1})^3+16(\f{2(2-\mu)}{\mu-1})^2+20\f{2(2-\mu)}{\mu-1}-1\right)\\
&=-\f{(\mu+1)(\mu^3+13\mu^2-21\mu-17)}{(\mu-1)^4}>0.
\end{aligned}
\end{equation}
By direct computation, we have that when $2\sqrt{2}-1\leq\mu<\mu_1=1.977$, \eqref{c10} holds.
As discussed in Section \ref{E-5},  $\nu$ can be determined as follows:  if  $2\sqrt{2}-1\leq\mu<\mu_1$, $\nu=p+\f{1}{3}$
is taken for $\f{2}{\mu-1}\leq p<p_{conf}(2,\mu)$; if $\mu_1\leq\mu<2$, either $\nu=p$ for $\f{2}{\mu-1}\leq p<p^*(\mu)$
or $\nu=p+\f{1}{3}$ for $p^*(\mu)\leq p<p_{conf}(2,\mu)$ are taken, where $p^*(\mu)={\bar p}^*(\f{2(2-\mu)}{\mu-1})$.
Therefore, Theorem \ref{TH-2}  is shown.

\end{proof}

\appendix
\setcounter{equation}{1}
\section{Some useful estimates}

\begin{lemma}\label{A1}
\eqref{a-52} holds.
\end{lemma}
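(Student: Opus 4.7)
The plan is to prove \eqref{a-52} by a frequency-localized approach of the same flavor as the treatment of the operator $A_\lambda$ in Lemma~\ref{lem3.4-L} and the analysis in Subsection~\ref{subsub-1}. I will decompose $H(t,s,\cdot)$ dyadically in $\xi$, write $H=\sum_{j\in\mathbb Z}H_j$ with
\[
H_j(t,s,x)=\int_{\mathbb R^2}e^{i\{x\cdot\xi-[\phi_m(t)-\phi_m(s)]|\xi|\}}
\varphi(|\xi|/\lambda_j)\,a(t,s,\xi)\,\hat F(s,\xi)\,d\xi,\qquad \lambda_j=2^j,
\]
and obtain $L^{p_0}\!\to\!L^{q_0}$ bounds for each $H_j$ by complex interpolation between the two endpoints $L^2\!\to\!L^2$ and $L^1\!\to\!L^\infty$; summation in $j$ is then performed using the Littlewood--Paley lemma recalled in Lemma~\ref{lem3-E}, exactly as in the proof of \eqref{d-2}.

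First, the $L^2\!\to\!L^2$ bound follows from Plancherel and the symbol estimate on $a$:
\[
\|H_j(t,s,\cdot)\|_{L^2(\mathbb R^2)}\le C\bigl(1+\phi_m(t)\lambda_j\bigr)^{-\frac{m}{2(m+2)}}\bigl(1+\phi_m(s)\lambda_j\bigr)^{-\frac{m}{2(m+2)}}\lambda_j^{-\frac{2}{m+2}}\|F(s,\cdot)\|_{L^2}.
\]
Next, for the $L^1\!\to\!L^\infty$ bound, I apply the stationary phase estimate to the kernel of $H_j$, precisely as in the derivation of \eqref{a-60} (see also (3.41) of \cite{HWY1}), to obtain
\[
|K_j(t,s,x,y)|\le C\lambda_j^{\frac{3}{2}-\frac{2}{m+2}}\bigl(1+\phi_m(t)\lambda_j\bigr)^{-\frac{m}{2(m+2)}}\bigl(1+\phi_m(s)\lambda_j\bigr)^{-\frac{m}{2(m+2)}}\bigl(|\phi_m(t)-\phi_m(s)|+\lambda_j^{-1}\bigr)^{-\frac12}.
\]
Riesz--Thorin interpolation between these two endpoints with interpolation parameter $\theta=1-\tfrac{2}{q_0}$ then produces an $L^{p_0}\!\to\!L^{q_0}$ estimate in which the $\lambda_j$-power is
\[
\bigl(-\tfrac{2}{m+2}\bigr)\tfrac{2}{q_0}+\bigl(\tfrac{3}{2}-\tfrac{2}{m+2}\bigr)\bigl(1-\tfrac{2}{q_0}\bigr)=\tfrac{3}{2}-\tfrac{2}{m+2}-\tfrac{3}{q_0},
\]
while the time factor becomes $\bigl(|\phi_m(t)-\phi_m(s)|+\lambda_j^{-1}\bigr)^{-\frac{1}{2}(1-2/q_0)}$.

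The remaining step is to sum in $j$ and to verify the time decay rate $|t-s|^{-\frac{2}{q_0}(1+m/4)}$. Splitting the sum into the regimes $\lambda_j\phi_m(t)\lesssim 1$ and $\lambda_j\phi_m(t)\gtrsim 1$ (and similarly for $s$), and inserting $\phi_m(t)\sim t^{(m+2)/2}$, the surviving powers of $t$ and $s$ coming from the symbol decay $\phi_m(\cdot)^{-m/(2(m+2))}$ raised to the appropriate interpolation exponent combine precisely with $t^{\alpha/q_0}$ and $s^{-\alpha/q_0}$ at the critical value $q_0=\tfrac{2(m+3+\alpha)}{m+1}$, which is exactly the value for which the $\lambda_j$-exponent entering the Littlewood--Paley sum vanishes after adjusting by the time factor $(|\phi_m(t)-\phi_m(s)|+\lambda_j^{-1})^{-\frac12(1-2/q_0)}$. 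Using $|\phi_m(t)-\phi_m(s)|\lesssim|t-s|\max(t,s)^{m/2}$ converts the resulting bound into the claimed $|t-s|^{-\frac{2}{q_0}(1+m/4)}$ factor.

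The main obstacle will be the bookkeeping in this final step: one must carefully track the contributions of $(1+\phi_m(t)\lambda_j)^{-m/(2(m+2))}$ and $(1+\phi_m(s)\lambda_j)^{-m/(2(m+2))}$ in both the high- and low-frequency regimes, and check that, after raising to the interpolation exponents and pairing with the explicit time weights, the dyadic sum converges precisely because of the criticality of $q_0=\frac{2(m+3+\alpha)}{m+1}$. This is exactly the same algebraic identity that makes \eqref{equ:3.40-M} work at the endpoint $q=\t q_0$ in Lemma~\ref{lem3.4-L}, so the computation is structurally identical; only the time-weight bookkeeping (coming from $\alpha\ne0$) has to be added, which is the content I will write out in the appendix.
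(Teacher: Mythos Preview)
Your overall approach---dyadic decomposition of $H$ followed by $L^2$--$L^\infty$ interpolation and summation via Lemma~\ref{lem3-E}---is exactly what the paper does. But your execution has two concrete problems that would make the argument fail as written.

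First, you never use the context in which \eqref{a-52} is applied: it arises in Case~(i) of Subsection~\ref{subsub-1}, where $T\le t\le 2T$ with $T\ge\phi_m^{-1}(10\phi_m(2))$ and $\operatorname{supp}F\subset\{1\le s\le 2\}$. The paper exploits $s\in[1,2]$ immediately to collapse the symbol bound \eqref{a-51} to $|\partial_\xi^\beta a|\le C\phi_m(t)^{-m/(2(m+2))}\,s^{-\alpha/q_0}|\xi|^{-1-|\beta|}$ (equation~\eqref{a-54}), eliminating the $(1+\phi_m(s)|\xi|)$ factor entirely, and then uses $t\sim t-s$ and $\phi_m(t)-\phi_m(s)\sim\phi_m(t)\sim|t-s|^{(m+2)/2}$. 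Your proposed inequality $|\phi_m(t)-\phi_m(s)|\lesssim|t-s|\max(t,s)^{m/2}$ is an \emph{upper} bound and therefore points the wrong way: to bound $(|\phi_m(t)-\phi_m(s)|+\lambda_j^{-1})^{-1/2}$ from above you need a \emph{lower} bound on $\phi_m(t)-\phi_m(s)$. Without invoking $s\in[1,2]$ and $t\sim t-s$, you cannot convert the $\phi_m$ factors into the claimed $|t-s|^{-\frac{2}{q_0}(1+m/4)}$ decay.

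Second, your claim that the $\lambda_j$-exponent ``vanishes'' at $q=q_0$ is incorrect. After the paper's simplification, interpolation gives the exponent $\tfrac{1}{2}-\tfrac{3}{q_0}=\tfrac{\alpha-2m}{2(m+3+\alpha)}$ (see~\eqref{a-56}), which is strictly \emph{negative} for $-1<\alpha\le m$; that negativity is precisely what makes $\sum_{j\ge0}$ converge. Low frequencies are treated separately as $T_-=\sum_{j<0}T_j$, using directly the bound $|\xi|^{-2/(m+2)}(1+\phi_m(t)|\xi|)^{-m/(2(m+2))}$ on the small-$|\xi|$ region together with $t\sim t-s$. So rather than a four-regime split in $(\lambda_j\phi_m(t),\lambda_j\phi_m(s))$, the paper's route is: simplify via $s\in[1,2]$, interpolate for $j\ge0$ to get a geometrically summable bound, and handle $j<0$ by a direct estimate.
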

\begin{proof}
Note that $H=\sum_{j=-\infty}^{\infty} H_{j}$ with
\begin{equation}\label{a-53}
H_{j}=\int_{\mathbb{R}^{2}} e^{i\left\{x \cdot \xi-\left[\phi_{m}(t)-\phi_{m}(s)\right]|\xi|\right\}} \vp\left(\frac{|\xi|}{2^{j}}\right) a(t, s, \xi) \hat{F}(s, \xi) \mathrm{d} \xi.
\end{equation}
It follows from $1 \leq s \leq 2$ and \eqref{a-51} that
\begin{equation}\label{a-54}
\begin{aligned}
\big|\partial_{\xi}^{\beta} a(t, s, \xi)\big| \leq C \phi_{m}(t)^{-\frac{m}{2(m+2)}}s^{-\frac{\alpha}{q_{0}}}|\xi|^{-1-|\beta|}.
\end{aligned}
\end{equation}
Set
$$
T_{j} F(t, s, x)=: t^{\frac{\alpha}{q_{0}}} H_{j}(t, s, x) \quad \text { with } \lambda_{j}=2^{j},
$$
then it follows from stationary phase method that
$$
\left\|T_{j} F(t, s, \cdot)\right\|_{L^{2}\left(\mathbb{R}^{2}\right)} \leq C \lambda_{j}^{-1} t^{\frac{4\alpha-mq_0}{4q_{0}}}\|s^{-\frac{\alpha}{q_{0}}} F(s, \cdot)\|_{L^{2}\left(\mathbb{R}^{2}\right)}
$$
and
$$
\left\|T_{j} F(t, s, \cdot)\right\|_{L^{\infty}\left(\mathbb{R}^{2}\right)} \leq C \lambda_{j}^{\frac{1}{2}} t^{\frac{4\alpha-mq_0}{4q_{0}}}|t-s|^{-\frac{m+2}{4}}\|s^{-\frac{\alpha}{q_{0}}} F(s, \cdot)\|_{L^{1}\left(\mathbb{R}^{2}\right)}.
$$
By interpolation and direct computation, one has that for $j \geq 0$,
\begin{equation}\label{a-56}
\begin{aligned}
\| T_{j}F(t, s, \cdot) & \|_{L^{q_{0}}\left(\mathbb{R}^{2}\right)}\leq C \lambda^{\frac{\alpha-2m}{2(m+3+\alpha)}}|t-s|^{-\frac{2}{q_{0}}\left(1+\frac{m}{4}\right)}\|s^{-\frac{\alpha}{q_{0}}}
F(s, \cdot)\|_{L^{\frac{q_{0}}{q_{0}-1}}\left(\mathbb{R}^{2}\right)} .
\end{aligned}
\end{equation}
Define
$$
T_{+} F(t, s, x) \equiv  \sum_{j \geq 0} T_{j}F(t, s, x),
$$
then
\begin{equation}\label{a-57}
\left\|T_{+} F(t, s, \cdot)\right\|_{L^{q_{0}}\left(\mathbb{R}^{2}\right)}\leq C|t-s|^{-\frac{2}{q_{0}}\left(1+\frac{m}{4}\right)}\|s^{-\frac{\alpha}{q_{0}}} F(s, \cdot)\|_{L^{\frac{q_{0}}{q_{0}-1}}\left(\mathbb{R}^{2}\right)}.
\end{equation}
While for $j<0$, let
$$
T_{-} F(t, s, x) \equiv  \sum_{j<0}  T_{j}F(t, s, x).
$$
Then by  \eqref{a-51} and the condition $t\geq t-s \gtrsim t$, we obtain that for $1 \leq s \leq 2$,
$$
\begin{aligned}
\|T_{-} F(t, s, \cdot)\|_{L^{2}(\mathbb{R}^{n})} &\leq C(\int_{|\xi| \leq 1}| | \xi|^{-\frac{2}{m+2}} t^{\frac{\alpha}{q_{0}}}(1+\phi_{m}(t)|\xi|)^{-\frac{m}{2(m+2)}} \hat{F}(s, \xi)|^{2} \mathrm{~d} \xi)^{\frac{1}{2}}\\
&\leq C t^{\frac{\alpha}{q_{0}}}(\int_{0}^{1} r^{1-\frac{m+4}{m+2}} \phi_{m}(t)^{-\frac{m}{m+2}} \mathrm{d} r)^{\frac{1}{2}}\|s^{-\frac{\alpha}{q_{0}}} F(s, \cdot)\|_{L^{2}\left(\mathbb{R}^{2}\right)}\\
& \leq C|t-s|^{\frac{\alpha}{q_0}-\frac{m}{4}}\|s^{-\frac{\alpha}{q_{0}}}F(s, \cdot)\|_{L^2\left(\mathbb{R}^2\right)}.
\end{aligned}
$$
Similarly,
$$
\left\|T_{-} F(t, s, \cdot)\right\|_{L^{\infty}\left(\mathbb{R}^{2}\right)}
\leq C|t-s|^{\frac{\alpha}{q_{0}}-\frac{m}{4}- \frac{m+2}{4}}\|s^{-\frac{\alpha}{q_{0}}}F(s, \cdot)\|_{L^{1}\left(\mathbb{R}^{2}\right)}.
$$
By interpolation, one arrives at
\begin{equation}\label{a-58}
\left\|T_{-} F(t, s, \cdot)\right\|_{L^{q_{0}}\left(\mathbb{R}^{2}\right) }\leq C|t-s|^{-\frac{2}{q_{0}}\left(1+\frac{m}{4}\right)}
\|s^{-\frac{\alpha}{q_{0}}}F(s, \cdot)\|_{L^{\frac{q_{0}}{q_{0}-1}}\left(\mathbb{R}^{2}\right)}.
\end{equation}
Thus, combining \eqref{a-57}-\eqref{a-58} with Littlewood-Paley theory yields \eqref{a-52}.
\end{proof}

\begin{lemma}\label{A2}
\eqref{a-70} holds.
\end{lemma}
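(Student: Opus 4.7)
The target is the dispersive bound \eqref{a-70} for the operator $T_z$ defined in \eqref{a-67} on the vertical line $\operatorname{Re} z=\frac{2(m+3+\alpha)}{(m+2)(\alpha+2)}$. My plan is to reduce everything to a uniform bound on the Schwartz kernel and then extract the $\phi_m(t)^{-1/2-\frac{m}{2(m+2)}}$ decay from the oscillatory integral in $\xi$ by a careful stationary phase argument, as in \S3 of \cite{Gls1} and the analogue in \cite{HWY4} for $n\ge 3$, $m\ge 1$.

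First, by writing $z=\frac{2(m+3+\alpha)}{(m+2)(\alpha+2)}+i\theta$ with $\theta\in\mathbb R$, the prefactor $(z-\frac{2(m+3+\alpha)}{(m+2)(\alpha+2)})e^{z^2}$ has modulus $|\theta|\,e^{(\operatorname{Re}z)^2-\theta^2}$, which is uniformly bounded in $\theta$. Thus it suffices to prove
\[
  \sup_{x,y\in\mathbb R^2}|K_z(t,x,y)|\le C\,t^{\frac{\alpha}{q_0}}\phi_m(t)^{-\frac12-\frac{m}{2(m+2)}},
\]
where
\[
  K_z(t,x,y)=\int_{\mathbb R^2}e^{i\{(x-y)\cdot\xi-(\phi_m(t)-|y|)|\xi|\}}
  \bigl(1+\phi_m(t)|\xi|\bigr)^{-\frac{m}{2(m+2)}}\frac{d\xi}{|\xi|^z},
\]
and then $\|T_zg\|_{L^\infty}\le\|K_z\|_{L^\infty_{x,y}}\|g\|_{L^1}$ gives \eqref{a-70}.

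Second, I would split $K_z=K_z^{\mathrm{low}}+K_z^{\mathrm{high}}$ by a smooth cutoff at $|\xi|\sim 1/\phi_m(t)$. On the low-frequency piece $|\xi|\lesssim 1/\phi_m(t)$, the weight $(1+\phi_m(t)|\xi|)^{-\frac{m}{2(m+2)}}\approx 1$, and since $\operatorname{Re} z>0$, a direct estimate in polar coordinates yields a bound of order $\phi_m(t)^{-2+\operatorname{Re}z}$, which is better than what is claimed because $\operatorname{Re}z=\frac{2(m+3+\alpha)}{(m+2)(\alpha+2)}\le\frac{3}{2}+\frac{m}{2(m+2)}$ in the relevant ranges of $\alpha$ (the arithmetic just needs to be checked using $\alpha>-1$ and $q_0=\frac{2(m+3+\alpha)}{m+1}$). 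On the high-frequency piece $|\xi|\gtrsim 1/\phi_m(t)$, write $\xi=\rho\omega$ with $\rho>0$, $\omega\in S^1$, so the phase is $\rho\{(x-y)\cdot\omega-(\phi_m(t)-|y|)\}$. Applying stationary phase in $\omega$ first (the standard $S^1$ stationary phase with two nondegenerate critical points at $\omega=\pm(x-y)/|x-y|$) produces an oscillatory factor and a gain of $\rho^{-1/2}|x-y|^{-1/2}$; combining with the weight $(\phi_m(t)\rho)^{-\frac{m}{2(m+2)}}\rho^{-\operatorname{Re}z}$ and integrating $\rho$ on $[\phi_m(t)^{-1},\infty)$ using one more nonstationary phase in $\rho$ (the remaining phase is $\rho(|x-y|-(\phi_m(t)-|y|))$, which can be localized by repeated integration by parts outside a thin shell) produces the factor $\phi_m(t)^{-\frac12-\frac{m}{2(m+2)}}$.

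The third step is to glue the two regions and absorb the leftover $t^{\alpha/q_0}$ that sits outside the $\xi$-integral, together with the uniformly bounded $|(z-\cdots)e^{z^2}|$. The main obstacle is the oscillatory integral in the high-frequency regime: one must handle simultaneously the singularity coming from $|x-y|\to 0$ in the stationary-phase Jacobian and the singularity at the light-cone $|x-y|=\phi_m(t)-|y|$, which is exactly where the phase becomes degenerate in $\rho$. I would treat the cone region by a TT*-type $L^2$ argument and integration by parts following Lemma A.3 in \cite{HWY4}, and the off-cone region by pure non-stationary phase in $\rho$. Once these two estimates are assembled the bound \eqref{a-70} follows with the required constant, independently of $\theta=\operatorname{Im}z$. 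The 2D angular stationary phase gives the sharp $\phi_m(t)^{-1/2}$, which is the essential improvement over the $n\ge 3$ case worked out in \cite{HWY4}.
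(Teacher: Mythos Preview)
Your low-frequency estimate has a genuine gap. On the line $\operatorname{Re} z=\frac{2(m+3+\alpha)}{(m+2)(\alpha+2)}$ with $-1<\alpha<0$ (this is Part~1, the setting in which $T_z$ is introduced), one has $\operatorname{Re} z>\frac{m+3}{m+2}$ strictly: equality holds at $\alpha=0$ and $\operatorname{Re} z\to 2$ as $\alpha\to-1^+$. Your trivial bound on the region $|\xi|\lesssim\phi_m(t)^{-1}$ gives
\[
\int_{|\xi|\le\phi_m(t)^{-1}}|\xi|^{-\operatorname{Re} z}\,d\xi\;\approx\;\phi_m(t)^{\operatorname{Re} z-2},
\]
whereas the target decay in \eqref{a-70} is $\phi_m(t)^{-\frac12-\frac{m}{2(m+2)}}=\phi_m(t)^{-\frac{m+1}{m+2}}$. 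Matching would require $\operatorname{Re} z\le 2-\frac{m+1}{m+2}=\frac{m+3}{m+2}$, which is precisely the \emph{opposite} inequality. Your parenthetical ``$\operatorname{Re} z\le\frac32+\frac{m}{2(m+2)}$'' has the wrong sign; once corrected to $\frac32-\frac{m}{2(m+2)}=\frac{m+3}{m+2}$, the inequality fails throughout $-1<\alpha<0$. So a purely non-oscillatory low-frequency bound cannot deliver \eqref{a-70}.

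The paper instead exploits the geometry of the ``medium $\delta$, large time'' region in which $T_z$ is actually applied: there $|y|\le\phi_m(2)$, $\phi_m(t)\ge 10\phi_m(2)$ and $|x|\sim\phi_m(t)$, hence $|x-y|\sim\phi_m(t)$. Consequently the angular (Bessel-type) integral $\int_{S^1}e^{i\rho(x-y)\cdot\omega}\,d\omega$ already contributes a factor $(1+\phi_m(t)\rho)^{-1/2}$ for \emph{all} $\rho>0$, not only for $\rho\gtrsim\phi_m(t)^{-1}$. With this gain in hand the paper simply splits at $|\xi|=1$: for $|\xi|\ge 1$ the key observation is that $\operatorname{Re} z>\frac{m+3}{m+2}$ makes the radial integrand $\rho^{\,1-\frac12-\frac{m}{2(m+2)}-\operatorname{Re} z}$ absolutely integrable on $[1,\infty)$, with the factor $\phi_m(t)^{-\frac12-\frac{m}{2(m+2)}}$ pulled out directly from $(\phi_m(t)-|y|)^{-1/2}(\phi_m(t)|\xi|)^{-\frac{m}{2(m+2)}}$. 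No integration by parts in $\rho$, no thin-shell localization near the cone, and no $TT^*$ argument are needed here, so your high-frequency plan is more elaborate than necessary. Your scheme can be repaired, but only by first extracting the angular $(1+\phi_m(t)|\xi|)^{-1/2}$ gain from the support information, rather than discarding the oscillation at low frequencies.
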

\begin{proof}
Note that for $z=\frac{2(m+3+ \alpha)}{(m+2)(\alpha+2)}+i \theta$ with $\theta \in \mathbb{R}$
and $-1<\alpha<0$,  there exists $\sigma>0$ such that
$$
\frac{2(m+3+ \alpha)}{(m+2)(\alpha+2)}-\sigma>\frac{m+3}{m+2}.
$$
Then for $|\xi| > 1$, one obtains
\begin{equation}\label{a-72-1}
\begin{aligned}
& \big|\theta e^{-\theta^{2}} \int_{\frac{1}{2} \phi_{m}(1) \leq|y| \leq \phi_{m}(2)} \int_{|\xi| \geq 1} e^{i\left[(x-y) \cdot \xi-\left(\phi_{m}(t)-|y|\right)|\xi| \mid\right.} t^{\frac{\alpha}{q_{0}}}\left(1+\phi_{m}(t)|\xi|\right)^{-\frac{m}{2(m+2)}} g(y) \frac{\mathrm{d} \xi}{|\xi|^{z}} \mathrm{~d} y\big| \\
& \leq C\left(\phi_{m}(t)-\phi_{m}(2)\right)^{-\frac{1}{2}} \phi_{m}(t)^{-\frac{m}{2(m+2)}}t^{\frac{\alpha}{q_{0}}} \int_{|\xi| \geq 1}|\xi|^{-\frac{1}{2}-\frac{m}{2(m+2)} }|\xi|^{-\frac{m+3}{m+2}-\sigma}\mathrm{d} \xi\|g\|_{L^{1}\left(\mathbb{R}^{2}\right)} \\
& \leq C \phi_{m}(t)^{-\frac{1}{2}-\frac{m}{2(m+2)} }t^{\frac{\alpha}{q_{0}}}\|g\|_{L^{1}\left(\mathbb{R}^{2}\right)} .
\end{aligned}
\end{equation}
When $|\xi| \leq 1$ and $\alpha>-1$, by setting $\tilde{\sigma}=2-\frac{2(m+3+\alpha)}{(m+2)(\alpha+2)}>0$, we have
\begin{equation}\label{a-73-1}
\begin{aligned}
& \big|\theta e^{-\theta^{2}} \int_{\mathbb{R}^{2}}\int_{|\xi| \leq 1} e^{i\left[(x-y) \cdot \xi-\left(\phi_{m}(t)-|y|\right)|\xi|\right.} t^{\frac{\alpha}{q_{0}}}\left(1+\phi_{m}(t)|\xi|\right)^{-\frac{m}{2(m+2)}} g(y) \frac{\mathrm{d} \xi}{|\xi|^{z}} \mathrm{~d} y\big| \\
& \leq C\|g\|_{L^{1}\left(\mathbb{R}^{2}\right)} \int_{|\xi| \leq 1} t^{\frac{\alpha}{q_{0}}}\left(1+\phi_{m}(t)|\xi|\right)^{-\frac{1}{2}-\frac{m}{2(m+2)}}|\xi|^{-\frac{2(m+3+\alpha)}{(m+2)(\alpha+2)}} \mathrm{d} \xi \\
& \leq C\|g\|_{L^{1}\left(\mathbb{R}^{2}\right)} \int_{0}^{1}t^{\frac{\alpha}{q_{0}}} \left(1+\phi_{m}(t) r\right)^{-\frac{m+1}{m+2}}r^{-\frac{2(m+3+ \alpha)}{(m+2)(\alpha+2)}} r \mathrm{d} r\\
&=C t^{\frac{\alpha}{q_{0}}}\|g\|_{L^{1}\left(\mathbb{R}^{2}\right)} \int_{0}^{1} \left(1+\phi_{m}(t) r\right)^{-\frac{m+1}{m+2}}r^{-1+\tilde{\sigma}}  \mathrm{d} r\\
&\leq C t^{\frac{\alpha}{q_{0}}}\|g\|_{L^{1}\left(\mathbb{R}^{2}\right)}\big( \tilde{\sigma}^{-1} \phi_m(t)^{-\frac{m+1}{m+2}}+\frac{m+1}{ \tilde{\sigma}(m+2)} \int_0^1 \left(1+\phi_m(t) r\right)^{-\frac{m+1}{m+2}} r^{-1+\tilde{\sigma}} \mathrm{d} r\big)\\
& \leq C \phi_{m}(t)^{-\f{m+1}{m+2} }t^{\frac{\alpha}{q_{0}}}\|g\|_{L^{1}\left(\mathbb{R}^{2}\right)}.
\end{aligned}
\end{equation}
This, together with \eqref{a-72-1}, derives \eqref{a-70}.
\end{proof}

\begin{lemma}\label{A3}
\eqref{a-71} holds.
\end{lemma}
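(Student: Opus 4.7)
The plan is to exploit the fact that $\Re z=0$ makes $|\xi|^{-z}$ unimodular, so that the $L^2$ bound should reduce to a Plancherel-type multiplier estimate once the oscillatory factor $e^{i(x-y)\cdot\xi}$ in \eqref{a-67} is disentangled. First, I would take the Fourier transform of $(T_z g)(t,\cdot)$ in the $x$-variable, which yields
\begin{equation*}
\widehat{T_z g}(t,\xi)=C(z)\,t^{\f{\alpha}{q_0}}\bigl(1+\phi_m(t)|\xi|\bigr)^{-\f{m}{2(m+2)}}|\xi|^{-z}e^{-i\phi_m(t)|\xi|}\,\tilde g(\xi),
\end{equation*}
where $\tilde g(\xi):=\int_{\R^2}e^{-iy\cdot\xi+i|y||\xi|}g(y)\,dy$. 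Since $|\xi|^{-z}$ and $e^{-i\phi_m(t)|\xi|}$ both have modulus one for $\Re z=0$, Plancherel's theorem reduces the claim to dominating
\begin{equation*}
\int_{\R^2}\bigl(1+\phi_m(t)|\xi|\bigr)^{-\f{m}{m+2}}|\tilde g(\xi)|^2\,d\xi
\end{equation*}
by $\phi_m(t)^{-\f{m}{m+2}}\bigl(\phi_m(t)^\nu\delta^{-(\nu+1)}\bigr)^{\f{2}{m+2}}\|g\|_{L^2}^2$.

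Next, I would analyze the Radon-type object $\tilde g(\xi)$ through a $TT^*$ reformulation. Writing $\xi=\rho\omega$ and $y=r\omega'$ in polar coordinates, the phase becomes $\rho r(1-\cos\theta)=2\rho r\sin^2(\theta/2)\geq 0$ and vanishes precisely along the half-line $\omega'=\omega$. The associated $TT^*$ kernel
\begin{equation*}
K(y,y')=\int_{\R^2}\bigl(1+\phi_m(t)|\xi|\bigr)^{-\f{m}{m+2}}e^{i(|y|-|y'|)|\xi|-i(y-y')\cdot\xi}\,d\xi
\end{equation*}
would then be handled by stationary phase in $(\rho,\omega)$. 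The implicit shell restriction $\{x:\delta\phi_m(1)\le\phi_m(t)-|x|\le 2\delta\phi_m(1)\}$ from Case~(iii) of Subsection~\ref{subsub-1} must be propagated through this analysis in order to produce the factor $\delta^{-(\nu+1)/(m+2)}$, while the free parameter $\nu>0$ arises from trading sharp Plancherel decay for a robust weighted bound, in the same spirit as the refined weight $(\phi_m(t)+|x|)^{1/2}(\phi_m(t)-|x|)^{1/2-1/(m+2)-\epsilon}$ introduced in \eqref{SZH}.

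The principal obstacle I foresee is correctly extracting the $\delta$ and $\nu$ dependence in two space dimensions. In the three-dimensional analogue carried out in \cite{HWY4}, stationary phase on $S^2$ supplies an extra angular decay of size $\rho^{-1}$, whereas on $S^1$ one only gains $\rho^{-1/2}$; this borderline behavior forces a more delicate interplay between the symbol decay $(1+\phi_m(t)|\xi|)^{-m/(m+2)}$ and the shell geometry, and it is precisely the two-dimensional technical difficulty alluded to in the introduction. Once \eqref{a-71} is in hand, Stein's analytic interpolation applied to the analytic family $\{T_z\}$ between \eqref{a-70} at $\Re z=2(m+3+\alpha)/((m+2)(\alpha+2))$ and \eqref{a-71} at $\Re z=0$ will deliver \eqref{a-69} at $\Re z=2/(m+2)$, thereby closing the medium-$\delta$ case of Subsection~\ref{subsub-1}.
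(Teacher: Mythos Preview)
Your Plancherel reduction is correct as far as it goes, but the argument breaks down at the next step and never recovers the two structural inputs that actually produce the bound.

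First, the Fourier integral operator $g\mapsto\tilde g(\xi)=\int e^{-iy\cdot\xi+i|y||\xi|}g(y)\,dy$ is \emph{not} $L^2\to L^2$ bounded, even for $g$ supported in $\{\tfrac12\phi_m(1)\le|y|\le\phi_m(2)\}$. Your own $TT^*$ kernel
\[
K(y,y')=\int_{\R^2}e^{i[(|y|-|y'|)|\xi|-(y-y')\cdot\xi]}\,d\xi
\]
is, in the distributional sense, the 2-D forward wave propagator at time $|y|-|y'|$ evaluated at $y-y'$; its singular support is $\{|y-y'|=||y|-|y'||\}$, i.e.\ the set where $y,y'$ lie on a common ray through the origin. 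This is a genuinely singular distribution, so neither Schur nor stationary phase gives an $L^2$ bound for $T^*T$. Inserting the weight $(1+\phi_m(t)|\xi|)^{-m/(m+2)}$ does not help: in 2-D this only contributes $|\xi|^{-m/(m+2)}$ with $m/(m+2)<1$, which is not enough to make the $\xi$-integral absolutely convergent at infinity.

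Second, and more decisively, the bound you are trying to prove depends on $\delta$, yet the operator $T_z$ in \eqref{a-67} does not. A Plancherel estimate on all of $L^2(\R^2)$ can therefore never see $\delta$; the $\delta$-dependence has to be manufactured either through the shell restriction on the output side or through a $\delta$-dependent frequency localisation. You note that the shell ``must be propagated through'' but give no mechanism for doing so.

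The paper supplies precisely this missing mechanism: one splits $T_z=R_z+S_z$ by a smooth frequency cutoff at the scale
\[
|\xi|\sim \phi_m(t)^{\zeta-1}\delta^{-\zeta},\qquad \zeta=1+\nu,
\]
so that the parameter $\nu$ enters through the choice of threshold, not through any weighted-norm trade. For the low-frequency piece $R_z$ a multiplier bound on all of $\R^2$ (Lemma~A.3 of \cite{HWY4}) gives \eqref{a-76}, with the $\delta$-power coming from the cutoff scale itself. For the high-frequency piece $S_z$ one genuinely needs the shell $\{\delta\phi_m(1)\le\phi_m(t)-|x|\le 2\delta\phi_m(1)\}$ (Lemma~A.4 of \cite{HWY4}) to obtain \eqref{a-77}. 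Both lemmas are dimension-independent, so no new 2-D analysis is required here; the two-dimensional subtleties you anticipate arise elsewhere (cf.\ \eqref{a-72-1}--\eqref{a-73-1} and \eqref{SZH}), not in this $L^2$ endpoint.
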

\begin{proof}
Let $\rho \in C^{\infty}\left(\mathbb{R}^{2}\right)$ satisfy
$$
\rho(\xi)= \begin{cases}1, & |\xi| \geq 2, \\ 0, & |\xi| \leq 1.\end{cases}
$$
For $\zeta=1+\nu$, as in Section 4B3 of \cite{HWY4}, set
$$
\left(T_{z} g\right)(t, x)=\left(R_{z} g\right)(t, x)+\left(S_{z} g\right)(t, x),
$$
where
$$
\begin{aligned}
\big(R_{z} g\big)(t, x)=&\big(z-\frac{2(m+3+\alpha)}{(m+2)(\alpha+2)}\big) e^{z^{2}} \int_{\frac{1}{2} \phi_{m}(1) \leq|y| \leq \phi_{m}(2)} \int_{\mathbb{R}^{2}} e^{i\left((x-y) \cdot \xi-\left(\phi_{m}(t)-|y|\right)|\xi|\right)}\\
& \times t^{\frac{\alpha}{q_0}}\left(1+\phi_m(t)|\xi|\right)^{-\frac{m}{2(m+2)}}\left(1-\rho\left(\phi_m(t)^{1-\zeta} \delta^{\zeta} \xi\right)\right) g(y) \frac{\mathrm{d} \xi}{|\xi|^z} \mathrm{d} y,
\end{aligned}
$$
$$
\begin{aligned}
\left(S_{z} g\right)(t, x)=&\big(z-\frac{2(m+3+\alpha)}{(m+2)(\alpha+2)}\big) e^{z^{2}} \int_{\frac{1}{2} \phi_{m}(1) \leq|y| \leq \phi_{m}(2)} \int_{\mathbb{R}^{2}} e^{i\left((x-y) \cdot \xi-\left(\phi_{m}(t)-|y|\right)|\xi|\right)}\\
& \times t^{\frac{\alpha}{q_0}}\left(1+\phi_m(t)|\xi|\right)^{-\frac{m}{2(m+2)}}\rho\left(\phi_m(t)^{1-\zeta} \delta^{\zeta} \xi\right)g(y) \frac{\mathrm{d} \xi}{|\xi|^z} \mathrm{d} y.
\end{aligned}
$$
By Lemma A. 3 and Lemma A. 4 in \cite{HWY4} (the related results are independent of the space dimensions), one can obtain
\begin{equation}\label{a-76}
\left\|\left(R_z g\right)(t, \cdot)\right\|_{L^2\left(\mathbb{R}^2\right)} \leq C \phi_m(t)^{\frac{2 \alpha}{(m+2) q_0}-\frac{m}{2(m+2)}}\left(\phi_m(t)^{\zeta-1} \delta^{-\zeta}\right)^{\frac{1}{m+2}}\|g\|_{L^2\left(\mathbb{R}^2\right)}
\quad \text { for Re $z=0$ }
\end{equation}
and
\begin{equation}\label{a-77}
\begin{aligned}
& \left\|S_{z} g(t, \cdot)\right\|_{L^{2}\left(\left\{x: \delta \phi_{m}(1) \leq \phi_{m}(t)-|x| \leq 2 \delta \phi_{m}(1)\right\}\right)} \\
& \quad \leq C \phi_{m}(t)^{\frac{2 \alpha}{(m+2) q_{0}}-\frac{m}{2(m+2)}}\left(\phi_{m}(t)^{\zeta-1} \delta^{-\zeta}\right)^{\frac{1}{m+2}}\|g\|_{L^{2}\left(\mathbb{R}^{2}\right)} \quad \text { for Re  $z=0$.}
\end{aligned}
\end{equation}
Collecting \eqref{a-76} and \eqref{a-77} yields \eqref{a-71}.
\end{proof}

\begin{lemma}\label{A4}
\eqref{a-96} holds.
\end{lemma}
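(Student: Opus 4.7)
The plan is to mirror the strategy used for $v_1$ in Case~2 of Subsection~\ref{S-1} (cf.~\eqref{aa-95}--\eqref{a-95}), specialized to the $\operatorname{Re} z=0$ endpoint already handled in Lemma~\ref{A3}, since only an $L^2$ bound is needed here. The key observation is that on the support of $v_{02}$ the frequency lies in the annulus $1\leq|\xi|\lesssim \delta^{-1}$, so $|\xi|^{-1}$ is a bounded multiplier and the symbol of the operator is essentially $\phi_m(t)^{-m/(2(m+2))}$ times a smooth truncation in $\xi$.

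First I would introduce the change of variable $\phi_m(s)=|y|+\tau$ and apply the Cauchy--Schwarz inequality in $\tau\in[\delta_0\phi_m(1),2\delta_0\phi_m(1)]$ to extract a $\delta_0^{1/2}$ factor, reducing the estimate to an $L^2_x(\R^2)$ bound for
\[
\tilde G(t,\tau,x) = \int_{\R^2}\!\int_{|\xi|\geq 1} e^{i((x-y)\cdot\xi-(\phi_m(t)-\tau-|y|)|\xi|)}\, t^{\alpha/2}\phi_m(t)^{-\frac{m}{2(m+2)}}\frac{\rho_1(\delta\xi)}{|\xi|}F(\phi_m^{-1}(|y|+\tau),y)\,d\xi\,dy.
\]
Next, imitating the $R/S$ decomposition of Lemma~\ref{A3} with $\zeta=1+\nu$ and the auxiliary cutoff $\rho(\phi_m(t)^{1-\zeta}\delta^\zeta\xi)$, a Plancherel bound on the low-frequency piece (where $|\xi|\lesssim \phi_m(t)^{\zeta-1}\delta^{-\zeta}$) together with stationary phase on the high-frequency piece (using the nondegeneracy $\bigl|(\phi_m(t)-\tau-|y|)-|x-y|\bigr|\gtrsim \delta$ on $D_{t,x}^{T,\delta}$) should yield
\[
\|\tilde G(t,\tau,\cdot)\|_{L^2(\R^2)}\leq C\, t^{\alpha/2}\phi_m(t)^{-\frac{m}{2(m+2)}}\bigl(\phi_m(t)^{\nu}\delta^{-(1+\nu)}\bigr)^{\frac{1}{m+2}}\|s^{-\alpha/2}F(s,\cdot)\|_{L^2(\R^2)}.
\]

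Finally, integrating in $t\in[T,2T]$ and multiplying through by the prescribed weight $\phi_m(T)^{-1/2+(m-\alpha)/(m+2)-\nu/2}\delta^{-1/2+(m-\alpha)/(m+2)+\nu/2}$, the same power counting that closes \eqref{a-94} produces a net $\delta$-exponent of $-(\alpha+1)/(m+2)\leq 0$ (using $\alpha>-1$) together with a bounded $\phi_m(T)$-contribution (using $\delta\lesssim \phi_m(T)\lesssim 10\phi_m(2)$ and $T\sim \phi_m(T)^{2/(m+2)}$), so that the full product is dominated by $C\delta_0^{1/2}\|s^{-\alpha/2}F\|_{L^2([1,2]\times\R^2)}$, which is precisely~\eqref{a-96}.

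The hard part will be the second step: verifying the stationary-phase lower bound on $\nabla_\xi[(x-y)\cdot\xi-(\phi_m(t)-\tau-|y|)|\xi|]$ within the constrained annulus $1\leq|\xi|\lesssim \delta^{-1}$ while simultaneously keeping track of the additional weight $\phi_m(t)^{-m/(2(m+2))}$, which is absent from the higher-dimensional analysis in \cite{HWY4}. The parameter $\zeta=1+\nu$ has to be calibrated so that the splitting threshold $\phi_m(t)^{\zeta-1}\delta^{-\zeta}$ sits inside this annulus; under the hypothesis $\delta\lesssim \phi_m(2)$ of the present case this is automatic, and the remaining bookkeeping then follows exactly as in Lemma~\ref{A3}.
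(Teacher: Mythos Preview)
Your proposal has a genuine gap in the power counting, and it stems from two concrete errors about the setting of \eqref{a-96}.

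First, you write ``$\delta\lesssim \phi_m(T)\lesssim 10\phi_m(2)$''. The first inequality is true, but the second is not: \eqref{a-96} sits in Case~2 of Subsection~\ref{S-1}, where $T\ge \phi_m^{-1}(10\phi_m(2))$ is \emph{large} and only $\delta_0\le\delta\le 10\phi_m(2)$ is bounded. So $\phi_m(T)$ is unbounded and $\delta$ can be arbitrarily small. Second, you claim the net $\delta$-exponent is $-(\alpha+1)/(m+2)\le 0$; even granting that arithmetic, a nonpositive $\delta$-exponent is exactly the wrong sign here, since small $\delta$ would make $\delta^{-(\alpha+1)/(m+2)}$ blow up. (The reference to \eqref{a-94} is misleading: that computation is for Case~1 with $\delta\ge 10\phi_m(2)$ bounded \emph{below}.) More fundamentally, the reason your $v_1$-style argument cannot close is that the symbol of $v_{02}$, namely $\rho_1(\delta\xi)/|\xi|$ on $|\xi|\ge 1$, is only $O(1)$, whereas for $v_1$ the symbol $(1-\rho_1(\delta\xi))/|\xi|$ is $O(\delta)$; that extra $\delta$ is exactly what makes \eqref{aa-95}--\eqref{a-95} balance, and it is missing here. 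Plugging your $L^2$ bound $(\phi_m(t)^{\nu}\delta^{-(1+\nu)})^{1/(m+2)}$ into the weight of \eqref{a-93} actually yields a $\delta$-exponent $(m-2\alpha-4+\nu m)/(2(m+2))$, which is negative for small $\nu$ throughout the range $-1<\alpha\le m$.

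The paper does not attempt to squeeze the Lemma~\ref{A3} decomposition here. Instead it invokes Lemma~A.5 of \cite{HWY4} (dimension-independent), which exploits the localization to the thin $x$-annulus $\{\delta\phi_m(1)\le \phi_m(t)-|x|\le 2\delta\phi_m(1)\}$ to pull out a factor $\delta^{1/2}$, and reduces matters to a dyadic sum $\sum_{j\ge 0}$ over frequencies $2^j\in[1,C\delta^{-1}]$. That sum is then handled exactly as in (5-11)--(5-14) of \cite{HWY4}, producing only a $(1+|\ln\delta|)$ loss and the factor $\delta_0^{1/2}$. The resulting product $\delta^{(m-\alpha)/(m+2)}(\delta/\phi_m(T))^{\nu/2}(1+|\ln\delta|)$ is bounded (the positive $\nu/2$ power of $\delta$ absorbs the logarithm), and that closes \eqref{a-96}. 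The $\delta^{1/2}$ gain from the $x$-annulus is the missing ingredient in your plan.
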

\begin{proof}
Note that
$$
\|t^{\frac{\alpha}{2}} v_{02}\|_{L^{2}\left(D_{t, x}^{T, \delta}\right)} \leq\|\int\| \check{T} F\|_{L^{2}\left(\left\{x: \delta\phi_m(1) \leq \phi_{m}(t)-|x| \leq 2 \delta\phi_m(1)\right\}\right)} d s\|_{L^{2}\left(\left\{t: \frac{T}{2} \leq t \leq T\right\}\right)},
$$
where
$$
\check{T} F=\iint_{|\xi| \geq 1} e^{i\left((x-y) \cdot \xi-\left(\phi_{m}(t)-\phi_{m}(s)\right)|\xi|\right)} \phi_{m}(t)^{-\frac{m}{2(m+2)}} t^{\frac{\alpha}{2}} \frac{\rho_1(\delta \xi)}{|\xi|} F(s, y) \mathrm{d} y \mathrm{d} \xi.
$$
It follows from Lemma A.5  in \cite{HWY4} (the result in Lemma A.5 is independent of the spaced dimensions) and direct computation that
$$
\begin{aligned}
& \phi_{m}(T)^{-\frac{1}{2}+\frac{m-\alpha}{m+2}-\frac{\nu}{2}} \delta^{-\frac{1}{2}+\frac{m-\alpha}{m+2}+\frac{\nu}{2}}\|t^{\frac{\alpha}{2}} v_{02}\|_{L^{2}\left(D_{t, x}^{T, \delta}\right)} \\
&\leq C \phi_m(T)^{-\frac{1}{2}+\frac{m-\alpha}{m+2}-\frac{\nu}{2}} \delta^{-\frac{1}{2}+\frac{m-\alpha}{m+2}+\frac{\nu}{2}} \delta^{\frac{1}{2}} T^{\frac{1}{2}} \phi_m(T)^{\frac{\alpha}{m+2}-\frac{m}{2(m+2)}}\\
&\qquad\qquad\qquad\times\big(\sum_{j=0}^{\infty}\big\|\iiint_{2^{j} \leq|\xi| \leq 2^{j+1}} e^{\left.i\left((x-y) \cdot \xi-\phi_{m}(s)\right)|\xi|\right)} \frac{\rho_1(\delta \xi)}{|\xi|^{\frac{1}{2}}}s^{-\f{\al}{2}} F(s, y) \mathrm{d} y \mathrm{d} \xi \mathrm{d} s\big\|_{L_{x}^{2}}\big)\\
& = C \delta^{\frac{m-\alpha}{m+2}}\left(\frac{\delta}{\phi_{m}(T)}\right)^{\frac{\nu}{2}}\big(\sum_{j=0}^{\infty}\big\|\iiint_{2^{j} \leq|\xi| \leq 2^{j+1}} e^{\left.i\left((x-y) \cdot \xi-\phi_{m}(s)\right)|\xi|\right)} \frac{\rho_1(\delta \xi)}{|\xi|^{\frac{1}{2}}}s^{-\f{\al}{2}} F(s, y) \mathrm{d} y \mathrm{d} \xi \mathrm{d} s\big\|_{L_{x}^{2}}\big).
\end{aligned}
$$
Analogous to the proofs of (5-11)-(5-14) in \cite{HWY4}, we can obtain
$$
\begin{aligned}
& \phi_{m}(T)^{-\frac{1}{2}+\frac{m-\alpha}{m+2}-\frac{\nu}{2}} \delta^{-\frac{1}{2}+\frac{m-\alpha}{m+2}+\frac{\nu}{2}}\|t^{\frac{\alpha}{2}} v_{02}\|_{L^{2}(D_{t, x}^{T, \delta})} \\
& \leq C \delta^{\frac{m-\alpha}{m+2}}\left(\frac{\delta}{\phi_{m}(T)}\right)^{\frac{\nu}{2}}(1+|\ln \delta|) \delta_{0}^{\frac{1}{2}}\|s^{-\frac{\alpha}{2}} F\|_{L^{2}([1, 2]\times \mathbb{R}^{2})} \\
& \leq C \delta_{0}^{\frac{1}{2}}\|s^{-\frac{\alpha}{2}} F\|_{L^{2}([1, 2]\times \mathbb{R}^{2})}.
\end{aligned}
$$
Thus \eqref{a-96} is proved.
\end{proof}

\vskip 0.2 true cm

{\bf Acknowledgements}. Yin Huicheng wishes to express his deep gratitude to Professor Ingo Witt
(University of G\"ottingen, Germany) for his constant interests in this problem and many fruitful discussions in the past.
In addition, the authors would like to thank Dr. He Daoyin  and Prof. Ruan Zhuoping for their invaluable discussions.

\vskip 0.2 true cm
{\bf \color{blue}{Conflict of Interest Statement:}}

\vskip 0.2 true cm

{\bf The authors declare that there is no conflict of interest in relation to this article.}

\vskip 0.2 true cm
{\bf \color{blue}{Data availability statement:}}

\vskip 0.2 true cm

{\bf  Data sharing is not applicable to this article as no data sets are generated
during the current study.}

\vskip 0.2 true cm


\end{document}